\date{}
\newcommand{\One}{{\mathrm{\uppercase\expandafter{\romannumeral 1}}}}
\newcommand{\Two}{{\mathrm{\uppercase\expandafter{\romannumeral 2}}}}
\newcommand{\rhodn}{{\rho_{\pmb{d},n}}}
\newcommand{\rhoDN}[2]{{\rho_{\pmb{#1},#2}}}
\newcommand{\XXX}{\mathfrak{X}}
\newcommand{\Qp}{\Q_p}
\newcommand{\cZ}{\mathcal{Z}}
\newcommand{\Int}{\mathrm{int}}
\newcommand{\Sum}{\mathrm{sum}}
\title{Codimension two cycles in Iwasawa theory and tensor product of Hida families}
\author{Antonio Lei, Bharathwaj Palvannan%\thanks{\texttt{pbharathwaj@ncts.tw}}
}
\newcommand{\Addresses}{{% additional braces for segregating \footnotesize
  \bigskip

  Antonio Lei, \textsc{D\'epartement de math\'ematiques et de statistique, Universit\'e Laval, Pavillon Alexandre-Vachon, 1045 avenue de la M\'edecine, Qu\'ebec, Canada, G1V 0A6.} \par\nopagebreak \textit{E-mail address}: \texttt{antonio.lei@mat.ulaval.ca}

  \medskip

  Bharathwaj Palvannan, \textsc{National Center for Theoretical Sciences,
No. 1 Sec. 4, Roosevelt Rd., National Taiwan University,
Taipei, 106, Taiwan} \par\nopagebreak
  \textit{E-mail address}:  \texttt{pbharathwaj@ncts.tw}
}}
\begin{document}
\maketitle

\begin{abstract}
The purpose of this paper is to build on results in {\it{higher codimension Iwasawa theory}}.  The setting of our results involves Galois representations arising as cyclotomic twist deformations associated to (i) the tensor product of two cuspidal Hida families $F$ and $G$, and (ii) the tensor product of three cuspidal Hida families $F$, $G$ and $H$. On the analytic side, we consider (i) a pair of $3$-variable Rankin-Selberg $p$-adic $L$-functions constructed by Hida and (ii) a balanced $4$-variable $p$-adic $L$-function (due to Hsieh and Yamana) and an unbalanced  $4$-variable $p$-adic $L$-function (whose existence is currently conjectural). In each of these setups, when the two $p$-adic $L$-functions generate a height two ideal in the corresponding deformation ring, we use codimension two cycles of that ring to relate them to a pair of pseudo-null modules.
\end{abstract}

\tableofcontents

\section{Introduction}
\thispagestyle{fancy}
The topic of \textit{higher codimension Iwasawa theory} was initiated in the work of \cite{bleher2015higher}. The underlying objective is to associate analytic invariants to pseudo-null modules in Iwasawa theory. Let us first introduce the necessary notations to state our results precisely.

Let $p \geq 5$ be a fixed prime number. Let $F$, $G$ and $H$ be three ordinary cuspidal Hida families defined over $\Q$. Let $R_F$, $R_G$ and $R_H$ denote the integral closures of the irreducible components of the ordinary primitive cuspidal Hecke algebras through $F$,  $G$ and $H$ respectively. The rings $\R_F$, $\R_G$ and $R_H$ turn out to be finite integral extensions of the power series rings $\Z_p[[x_F]]$, $\Z_p[[x_G]]$  and $\Z_p[[x_H]]$ respectively. The variables $x_F$,  $x_G$ and $x_H$ are the ``weight variables'' for the Hida families $F$, $G$ and $H$ respectively. Throughout this paper, we will impose the following hypotheses on $F$, $G$ and $H$:

\pagestyle{plain}
\begin{enumerate}[style=sameline, style=sameline, align=left,label=\textsc{IRR} --- , ref=\textsc{IRR}]
\item\label{hyp:residual-irr} The residual representations associated to $F$, $G$ and $H$ are absolutely irreducible.
\end{enumerate}
\begin{enumerate}[style=sameline, style=sameline, align=left,label=\textsc{$p$-DIS}  --- , ref=\textsc{$p$-DIS}]
\item \label{hyp:p-distinguished} The restrictions, to the decomposition subgroup $\Gal{\overline{\Q}_p}{\Q_p}$ at $p$, of the residual representations associated to $F$, $G$ and $H$ have non-scalar semi-simplifications.
\end{enumerate}

Let $\Sigma$ be a finite set of primes in $\Q$ containing the primes $p$, $\infty$, a finite prime $l_0 \neq p$ and all the primes dividing the tame levels of $F$, $G$ and $H$. Let $\Sigma_0$ denote $\Sigma \setminus \{p\}$. Let $\Q_\Sigma$ denote the maximal extension of $\Q$ unramified outside $\Sigma$.  Let $G_\Sigma$ denote $\Gal{\Q_\Sigma}{\Q}$.   Assume that the integral closures of $\Z_p$ in $\R_F$, $\R_G$ and $R_H$ are all equal (extending scalars if necessary)~to~$O$. Let $\Q_\Cyc$ denote the cyclotomic $\Z_p$-extension of $\Q$. Let $\Gamma_\Cyc$ denote $\Gal{\Q_\Cyc}{\Q}$. Let $\kappa:G_\Sigma \twoheadrightarrow \Gamma_\Cyc \hookrightarrow \Gl_1(O[[\Gamma_\Cyc]])$ denote the tautological character. We have a two-dimensional Galois representation $\rho_F: G_\Sigma \rightarrow \Gl_2(\R_F)$ associated to $F$ (see Hida's work \cite{hida1986galois}) such that $\mathrm{Trace}(\rho_F)\left(\Frob_l\right)$ equals $a_l(F)$ for all primes $l \notin \Sigma$. Here, the element $a_l(F)$ in $R_F$ denotes the $l^{\text{th}}$-Fourier coefficient of $F$. We let $L_F$ denote the free $\R_F$-module  of rank $2$ on which $G_\Sigma$ acts to let us obtain the Galois representation $\rho_F$.  We can similarly define two-dimensional Galois representations $\rho_G: G_\Sigma \rightarrow \Gl_2(\R_G)$ and $ \rho_H: G_\Sigma \rightarrow \Gl_2(\R_H)$  along  with the associated rank two $R_G$-module $L_G$ and the associated rank two $R_H$-module $L_H$. Let $\psi:G_\Sigma \rightarrow O^\times$ denote a finite continuous character.

We will use the symbol $\hotimes$ to denote the completed tensor product over $O$. We will use the symbol $(\cdot)^\vee$ to  denote the Pontryagin dual.

\subsection*{Tensor product of two Hida families}

Let $R_{F,G}$ denote the completed tensor product  $R_F \hotimes R_G$.  The ring $R_{F,G}$ is an integrally closed domain and a finite integral extension of $\Z_p[[x_F,x_G]]$.  We will consider the Iwasawa algebras $\displaystyle O[[\Gamma_\Cyc]]:=\varprojlim_n O[\Gamma_\Cyc / \Gamma_\Cyc^{p^n}]$ and $\displaystyle R_{F,G}[[\Gamma_\Cyc]]:=\varprojlim_n R_{F,G}[\Gamma_\Cyc / \Gamma_\Cyc^{p^n}]$. We have a 4-dimensional Galois representation $\rhoDN{4}{3}:G_\Sigma \rightarrow \Gl_4\left(R_{F,G}[[\Gamma_\Cyc]]\right)$, given by the action of $G_\Sigma$ on $L_{\pmb{4},{3}}:=L_F \ \hotimes \  L_G\  \hotimes \ O[[\Gamma_\Cyc]] (\psi\kappa^{-1})$, a free  $R_{F,G}[[\Gamma_\Cyc]]$-module of rank four. Let $D_{\pmb{4},{3}}$ denote the discrete $R_{F,G}[[\Gamma_\Cyc]]$-module $L_{\rhoDN{4}{3}} \otimes_{R_{F,G}[[\Gamma_\Cyc]]} R_{F,G}[[\Gamma_\Cyc]]^\vee$.

The Galois representation $\rhoDN{4}{3}$ satisfies two distinct Panchishkin conditions (a terminology coined by Greenberg in \cite{greenberg1994iwasawa}) depending on the choice of the dominant Hida family. Corresponding to the two Panchishkin conditions, there are two $3$-variable Rankin-Selberg $p$-adic $L$-functions $L_p(\underline{\mathbf{F}},G,\psi,s)$ and $L_p(F,\underline{\mathbf{G}},\psi,s)$ in this setup. These $p$-adic $L$-functions, constructed\footnote{One must modify the $p$-adic $L$-function constructed in \cite{Hidafourier} by multiplying it with a one-variable $p$-adic $L$-function associated to the 3-dimensional adjoint representation $\Ad^0(\rho_F)$ (see Conjecture 1.0.1 in \cite{hida1996search}). A discussion surrounding the need to introduce this modification for the Iwasawa main conjectures, which is related to the choice of a certain period (N\'eron period versus a period involving the Peterson inner product), is carefully explained in Hida's article \cite[Sections 1 and 6]{hida1996search}.} by Hida \cite{Hidafourier, hida1996search}, are elements of the fraction field of $R_{F,G}[[\Gamma_\Cyc]]$. For these $p$-adic $L$-functions, in addition to the weights of $F$ and $G$, one can also vary the cyclotomic variable. Let $\theta^{\One}_{\pmb{4},{3}}$ and $\theta^{\Two}_{\pmb{4},{3}}$ denote $L_p(\underline{\mathbf{F}},G,\psi,s)$ and  $L_p(F,\underline{\mathbf{G}},\psi,s)$ respectively.

\subsection*{Tensor product of three Hida families}

Let $R_{F,G,H}$ denote the completed tensor product  $R_F \hotimes R_G \hotimes R_H$.  The ring $R_{F,G,H}$ is an integrally closed domain and a finite integral extension of $\Z_p[[x_F,x_G,x_H]]$.  We will also consider the Iwasawa algebra $\displaystyle R_{F,G,H}[[\Gamma_\Cyc]]:=\varprojlim_n R_{F,G,H}[\Gamma_\Cyc / \Gamma_\Cyc^{p^n}]$. We have an 8-dimensional Galois representation $\rhoDN{8}{4}:G_\Sigma \rightarrow \Gl_8\left(R_{F,G,H}[[\Gamma_\Cyc]]\right)$, given by the action of $G_\Sigma$ on $L_{\pmb{8},{4}}:=L_F \ \hotimes \  L_G\  \hotimes L_H \ \hotimes \ O[[\Gamma_\Cyc]] (\psi\kappa^{-1})$, a free  $R_{F,G,H}[[\Gamma_\Cyc]]$-module of rank eight. Let $D_{\pmb{8},{4}}$ denote the discrete $R_{F,G,H}[[\Gamma_\Cyc]]$-module $L_{\pmb{8},{4}} \otimes_{R_{F,G,H}[[\Gamma_\Cyc]]} R_{F,G,H}[[\Gamma_\Cyc]]^\vee$.

The Galois representation $\rhoDN{8}{4}$ satisfies four distinct Panchishkin conditions. The existence of the $4$-variable $p$-adic $L$-functions, predicted by Greenberg,  corresponding to each of these Panchishkin conditions is (currently) conjectural. Depending on the choice of the dominant Hida family, one should conjecturally have three $4$-variable ``unbalanced'' $p$-adic $L$-functions $L_p^{\mathrm{unb}}(\underline{\mathbf{F}},G,H,\psi,s)$, $L_p^{\mathrm{unb}}(F,\underline{\mathbf{G}},H,\psi,s)$ and $L_p^{\mathrm{unb}}(F,G,\underline{\mathbf{H}},\psi,s)$ in the fraction field of $R_{F,G,H}[[\Gamma_\Cyc]]$. One should also conjecturally have a ``balanced'' $4$-variable $p$-adic $L$-function $L_p^{\mathrm{bal}}(F,G,H,\psi,s)$ in the fraction field of $R_{F,G,H}[[\Gamma_\Cyc]]$. For the unbalanced and the balanced $p$-adic $L$-functions, in addition to the weights of $F$, $G$ and $H$, one can also vary the cyclotomic variable. As evidence to the existence of these $p$-adic $L$-functions, Hsieh \cite{hsieh2016hida} has recently constructed both the unbalanced and the balanced $3$-variable $p$-adic $L$-functions (without the cyclotomic variable) in this setup. Hsieh and Yamana have announced the construction of the $4$-variable balanced $p$-adic $L$-function in a forthcoming work \cite{hsiehyamana}. See also related work of Harris-Tilouine \cite{MR1835065}, B\"ocherer-Panchishkin \cite{MR2290585}, Darmon-Rotger \cite{MR3250064} and Greenberg-Seveso \cite{greenberg2015triple}. Let $\theta^{\One}_{\pmb{8},{4}}$ denote $L_p^{\mathrm{unb}}(\underline{\mathbf{F}},G,H,\psi,s)$ and $\theta^{\Two}_{\pmb{8},{4}}$ denote $L_p^{\mathrm{bal}}(F,G,H,\psi,s)$.

\subsection{Main Result in higher codimension Iwasawa theory}

Let $\RRR$ denote an integrally closed Noetherian domain. Let $Z^i(\RRR)$ denote the free abelian group on the set of height $i$ prime ideals of $\RRR$. A finitely generated torsion $\RRR$-module $\MMM$ is said to be pseudo-null if the divisor $\Div(\MMM)$ associated to $\MMM$ in $Z^1(\RRR)$ equals zero. We will let $\MMM_\pn$ denote the maximal pseudo-null submodule of an $\RRR$-module $\MMM$. If $\MMM$ is a pseudo-null $\RRR$-module, we define an element $c_2(\MMM)$ in $Z^2(\RRR)$ as the following formal sum:
\begin{align}
  c_2(\MMM) := \sum_{\substack{\QQQ \subset \RRR \\ \mathrm{height}(\QQQ)=2}} \left(\len_{\RRR_\QQQ} M_\QQQ\right) \QQQ.
\end{align}
In the above formula, the summation is taken over all the height two prime ideals $\QQQ$ of $\RRR$. The invariant $c_2$ in $Z^2(\RRR)$ is a generalization of the ``characteristic divisor'' in $Z^1(\RRR)$ associated to a torsion $\RRR$-module.

Our first main result is Theorem \ref{maintheorem}. We will explain the notations that we have used and some of the motivation behind the hypotheses that we have imposed after the statement of Theorem~\ref{maintheorem}. This theorem follows from Theorem 4.3 of our earlier work \cite{lei2018codimension}. This is explained in Section~\ref{sec:proof_main}.

\begin{Theorem} \label{maintheorem}
Let $\left(\rho,R,d,n\right)$ equal one of the following tuples:
\begin{align*}
\left(\rhoDN{4}{3},R_{F,G}[[\Gamma_\Cyc]],4,3\right), \qquad \left(\rhoDN{8}{4},R_{F,G,H}[[\Gamma_\Cyc]],8,4\right).
\end{align*}
In addition to the hypotheses \ref{hyp:residual-irr} and \ref{hyp:p-distinguished}, we will also suppose that the following conditions hold:
\begin{enumerate}[style=sameline, align=left, label={\textsc{MC}  \textemdash} , ref=\textsc{MC}]
\item\label{hyp:imc} The Iwasawa-Greenberg Main Conjectures for the Galois representation $\rho$ hold, corresponding to both the $p$-adic $L$-functions $\theta^\One_{\pmb{d},n}$ and $\theta^\Two_{\pmb{d},n}$.
\end{enumerate}
\begin{enumerate}[style=sameline, align=left, label={\textsc{GCD} --- }, ref=\textsc{GCD}]
\item\label{hyp:gcd} The height of the ideal $(\theta^{I}_{\pmb{d},n}, \theta^\Two_{\pmb{d},n})$ in the ring $R$ is greater than or equal to two.
\end{enumerate}
\begin{enumerate}[style=sameline,  style=sameline, align=left,label=\textsc{GOR}  --- , ref=\textsc{GOR}]
\item \label{hyp:gor} The  rings $R_F$, $R_G$ and $R_H$ are Gorenstein local rings.
\end{enumerate}
\begin{enumerate}[ style=sameline, align=left, label={$\textsc{Loc}\mathrm{_p(0)}$ --- }, ref={$\textsc{Loc}\mathrm{_p(0)}$}]
\item\label{hyp:locp0} Neither the trivial representation nor the Teichm\"uller character $\omega$ is a Jordan-Holder component of the residual representation $\overline{\rho}$ for the action of $\Gal{\overline{\Q}_p}{\Q_p}$.
\end{enumerate}

Then, we have the following equality in $Z^2\left(R\right)$:
  \begin{align*}
    c_2\left(\frac{R}{\left(\theta^\One_{\pmb{d},n}, \theta^\Two_{\pmb{d},n}\right)}\right) = c_2\bigg(\mathcal{Z}(\Q,D_{\pmb{d},n})\bigg) + c_2\bigg(\mathcal{Z}(\Q,D^\star_{\pmb{d},n})\bigg) + \sum_{l \in \Sigma_0} c_2\bigg(\left(H^0\left(\Q_l, D_{\pmb{d},n}\right)^\vee\right)_{\pn}\bigg).
  \end{align*}
\end{Theorem}

We have imposed the hypotheses \ref{hyp:residual-irr} and \ref{hyp:p-distinguished} to ensure that the $2$-dimensional Galois representations $\rho_F$, $\rho_G$ and $\rho_H$ satisfy Greenberg's Panchishkin condition. This, in turn, lets us deduce that the Galois representation $\rho$ satisfies the Panchishkin condition(s). Following Greenberg's methodology, one can correspondingly construct discrete Selmer groups $\Sel_\One(\Q,D_{\pmb{d},n})$, $\Sel_\Two(\Q,D_{\pmb{d},n})$ and formulate main conjectures relating the $p$-adic $L$-functions $\theta^\One_{\pmb{d},n}$ and $\theta^\Two_{\pmb{d},n}$ to the corresponding Selmer groups. Along with the hypotheses of our main theorem, these main conjectures should predict the following equality of divisors in $Z^1\left(R\right)$:
\begin{align} \label{IMC}
\tag{IMC} \Div\left(\Sel_\One(\Q,D_{\pmb{d},n})^\vee\right) \stackrel{?}{=} \Div(\theta^\One_{\pmb{d},n}), \qquad  \Div\left(\Sel_\Two(\Q,D_{\pmb{d},n})^\vee\right) \stackrel{?}{=} \Div(\theta^\Two_{\pmb{d},n}).
\end{align}
Implicit in the statement of the Iwasawa-Greenberg main conjectures for these Galois representations is the assertion that the Pontryagin duals of these Selmer groups are torsion.  Also implicit is the assertion that the $p$-adic $L$-functions $\theta^\One_{\pmb{d},n}$ and $\theta^\Two_{\pmb{d},n}$ are non-zero and integral (that is, they belong to $R$).

The hypotheses \ref{hyp:gor} and \ref{hyp:locp0} are technical hypotheses that we need to impose to invoke results from our earlier work \cite{lei2018codimension}.

The $R$-module $\mathcal{Z}(\Q,D_{\pmb{d},n})$ is the Pontryagin dual of the  intersection $\Sel_\One(\Q,D_{\pmb{d},n}) \ \bigcap \  \Sel_\Two(\Q,D_{\pmb{d},n})$ inside the first global Galois cohomology group $H^1\left(G_\Sigma,D_{\pmb{d},n}\right)$. One can similarly associate a discrete $R$-module $D^\star_{\pmb{d},n}$ and two discrete Selmer groups $\Sel_\One(\Q,D^\star_{\pmb{d},n})$ and  $\Sel_\Two(\Q,D^\star_{\pmb{d},n})$ to the Galois representation $\rho^\star$ (the Tate dual of $\rho$). {See Section \ref{sec:selmer_struc} for the definition of these objects}. Under the hypotheses of Theorem \ref{maintheorem}, the finitely generated $R$-module $\mathcal{Z}(\Q,D^\star_{\pmb{d},n})$ is the Pontryagin dual of the  intersection $\Sel_\One(\Q,D^\star_{\pmb{d},n}) \ \bigcap \  \Sel_\Two(\Q,D^\star_{\pmb{d},n})$ inside $H^1\left(G_\Sigma,D^\star_{\pmb{d},n}\right)$. See Section \ref{sec:desc} for a precise description of these modules. Under these hypotheses, the $R$-modules $\mathcal{Z}(\Q,D_{\pmb{d},n})$ and $\mathcal{Z}(\Q,D^\star_{\pmb{d},n})$ turn out to be  pseudo-null. See \cite[Proposition 4.1]{lei2018codimension}.

The main impetus behind our results in this paper and prior results in higher codimension Iwasawa theory \cite{bleher2015higher,lei2018codimension}, are pseudo-nullity conjectures in Iwasawa theory. These conjectures are  briefly recalled in Section \ref{sec:pseudonull}. They involve various situations where the Pontryagin dual of the \textit{fine Selmer group} is pseudo-null. We impose the hypothesis labeled \ref{hyp:gcd} since it allows us to work in an instructive setup when the Pontryagin dual of the fine Selmer group is pseudo-null.

We view the terms $c_2\bigg(\left(H^0\left(\Q_l, D_{\pmb{d},n}\right)^\vee\right)_{\pn}\bigg)$ as local fudge factors at  primes  $l \in \Sigma_0$.

\begin{remark}
Buyukboduk and Ochiai \cite{KazimOchiai}, employing the Euler system of Beilinson-Flach elements constructed by Kings-Loeffler-Zerbes \cite{KLZ}, have proved that, under certain technical hypotheses, the inequality $\le$ in \eqref{IMC} holds for the Galois representation $\rhoDN{4}{3}$.
\end{remark}

\begin{remark}
{Theorem 4.3 of our earlier work \cite{lei2018codimension} is a general result. Following the notations in \cite{lei2018codimension}, we associated a pseudo-null module $\mathcal{Z}^{(*)}(\Q,D^\star_{\pmb{d},n})$ to the discrete module  $D^\star_{\pmb{d},n}$ for the Tate dual of the initial Galois representation. Although a priori it may seem that the description for $\mathcal{Z}^{(*)}(\Q,D^\star_{\pmb{d},n})$ need not match the description of $\mathcal{Z}(\Q,D^\star_{\pmb{d},n})$, what we prove in Section \ref{sec:desc} is that in the setting of Theorem \ref{maintheorem}, these descriptions turn out to be equivalent.}
\end{remark}

\subsection{Selmer groups and principal divisors}

Though the overall goal is to associate analytic invariants to the pseudo-null modules $\mathcal{Z}(\Q,D_{\pmb{d},n})$ and $\mathcal{Z}(\Q,D^\star_{\pmb{d},n})$, assuming the validity of the main conjectures allows us to work with the principal divisors $\Div\left(\Sel_\One(\Q,D_{\pmb{d},n})^\vee\right)$ and $\Div\left(\Sel_\Two(\Q,D_{\pmb{d},n})^\vee\right)$ on the algebraic side instead of the corresponding $p$-adic $L$-functions on the analytic side.   An added difficulty that appears in the setting of Theorem \ref{maintheorem} is that the deformation rings $R_F$, $R_G$ and $R_H$  are not always known to be UFDs. {See section 4 in \cite{palvannan2015homological} for some examples when these rings are not UFDs}. Contrast this with earlier results in higher codimension Iwasawa theory \cite{bleher2015higher,lei2018codimension}, which were obtained over regular local rings. Without assuming the Iwasawa-Greenberg main conjecture, the following subtle question arises:

\begin{question} \label{ques:prin}
If the Pontryagin dual of the Selmer group is torsion, does it generate a principal divisor?
\end{question}

We now state our second main result (Theorem \ref{theorem:second}) which provides an affirmative answer to the above question  in the fairly general context of cyclotomic twist deformations\footnote{the definition of cyclotomic twist deformations is recalled in Section \ref{sec:cyc_twist}.}. This confirms a prediction of Ralph Greenberg stated in the paragraph immediately following the statement formulating the Iwasawa main conjecture \cite[Conjecture 4.1]{greenberg1994iwasawa}.\\

Let $\RRR$ be a finite integral extension of $\Z_p[[u_1,\dotsc,u_n]]$ and assume that it is an integrally closed domain. Consider a Galois representation $\varrho : G_\Sigma \rightarrow \Gl_d(\RRR)$ satisfying Greenberg's Panchishkin condition. Let $\varrho \otimes \kappa^{-1}: G_\Sigma \rightarrow \Gl_d(\RRR[[\Gamma_\Cyc]])$ denote the associated cyclotomic twist deformation. One can associate a (primitive) Selmer group $\Sel\left(\Q,\D_{\varrho \otimes \kappa^{-1}}\right)^\vee$ to the Galois representation $\varrho \otimes \kappa^{-1}$. See Section \ref{sec:selmer_struc}. The construction of the Selmer group depends on the choice of the Panchishkin condition.

\begin{Theorem} \label{theorem:second}
Suppose that the $\RRR[[\Gamma_\Cyc]]$-module $\Sel\left(\Q,\D_{\varrho \otimes \kappa^{-1}}\right)^\vee$ is torsion. Then, the following divisor in $Z^1\left(\RRR[[\Gamma_\Cyc]]\right)$ is principal:
\begin{align} \label{eq:divisor_MC}
\Div\left(\Sel\left(\Q,\D_{\varrho \otimes \kappa^{-1}}\right)^\vee\right) - \Div\left(H^0\left(\Q,\D_{\varrho \otimes \kappa^{-1}}\right)^\vee\right) - \Div\left(H^0\left(\Q,\D^\star_{\varrho \otimes \kappa^{-1}}\right)^\vee\right).
\end{align}
\end{Theorem}

In its complete generality, Greenberg's formulation of the Iwasawa main conjecture equates the divisor in equation (\ref{eq:divisor_MC}) to the divisor of the $p$-adic $L$-function. In fact, our results show that the divisors $\Div\left(\Sel\left(\Q,\D_{\varrho \otimes \kappa^{-1}}\right)^\vee\right)$, $\Div\left(H^0\left(\Q,\D_{\varrho \otimes \kappa^{-1}}\right)^\vee\right)$ and $\Div\left(H^0\left(\Q,\D^\star_{\varrho \otimes \kappa^{-1}}\right)^\vee\right)$ are all principal. This last statement relies on the fact that we are working with cyclotomic twist deformations. Our approach towards proving Theorem \ref{theorem:second} involves invoking results on Galois cohomology groups in Greenberg's foundational works  \cite{MR2290593,greenberg2010surjectivity}. Another key input to our approach is the simple observation that for any local ring $\RRR$,  the connecting homomorphism  $\partial: K_1\left(\mathrm{Frac}(\RRR)\right) \rightarrow K_0\left(\RRR,\mathrm{Frac}(\RRR)\right)$ in $K$-theory is surjective. For Galois representations that are not necessarily cyclotomic twist deformations, we prove a non-primitive analog of Theorem \ref{theorem:second} in Proposition \ref{prop:prin_div}. The divisor involving the non-primitive Selmer group appearing in that proposition should also appear in the formulation of a non-primitive version of the Iwasawa main conjecture, relating it to the divisor of a non-primitive $p$-adic $L$-function.

\begin{remark}
In the setting of Theorem \ref{maintheorem}, the hypothesis \ref{hyp:locp0} lets us deduce that both $H^0\left(\Q,D_{\pmb{d},n}\right)$ and $H^0\left(\Q,D^\star_{\pmb{d},n}\right)$ are equal to $0$.
\end{remark}

\begin{remark} \label{rem:torsion_2}
We must emphasize that it is a deep arithmetic question (which we do not address) to establish that the Pontryagin dual of the Selmer group  is torsion.

Using results of Kings, Loeffler and Zerbes in \cite{KLZ}, it would be possible to establish under certain hypotheses that the $R_{F,G}[[\Gamma_\Cyc]]$-module $\Sel_\One\left(\Q,D_{\pmb{4},3}\right)^\vee$ is torsion. One can consider the Selmer groups associated to classical specializations of the Hida families $F$, $G$ with weight $k$, $k'$ respectively and such that $k- k'\ge2$, along with the specialization of the cyclotomic variable to $k-1$. As explained in Remark 11.6.5 and Theorem 11.6.6 of \cite{KLZ}, the Selmer group associated to such a specialization is shown to be finite if we assume that the Galois representation associated to this specialization satisfies a ``big image'' hypothesis and admits ``no exceptional zero'' (these conditions are labeled Hyp(\textit{BI}) and Hyp(\textit{NEZ}) in \cite{KLZ}). When these conditions hold, one can use control theorems to deduce that the $R_{F,G}[[\Gamma_\Cyc]]$-module $\Sel_\One\left(\Q,D_{\pmb{4},3}\right)^\vee$  is torsion. By similar arguments, one can deduce that the $R_{F,G}[[\Gamma_\Cyc]]$-module $\Sel_\Two\left(\Q,D_{\pmb{4},3}\right)^\vee$ is torsion.

To the best of our knowledge, in the setting of the cyclotomic twist deformation of the tensor product of three Hida families, little progress has been made in obtaining general results proving that the Pontryagin dual of the Selmer groups are torsion.
\end{remark}

\begin{remark}
For Theorem \ref{theorem:second}, the reader might also be interested in independent but related results in the thesis of Jyoti Prakash Saha \cite{saha:tel-01124363} and his preprint \cite{saha2018algebraic}.
\end{remark}

\subsection{Pseudo-nullity conjectures in Iwasawa theory} \label{sec:pseudonull}

To briefly motivate the pseudo-nullity conjectures in Iwasawa theory, it will be helpful to consider a general setup. Let $K$ be a number field. Let $\Sigma$ denote a finite set of primes in $K$ containing all the primes lying above $p$ and $\infty$. Let $K_\Sigma$ denote the maximal extension of $K$ ramified outside $\Sigma$. Let $G_\Sigma$ denote $\Gal{K_\Sigma}{K}$. Let $\RRR$ denote a finite integral extension of the power series ring $\Z_p[[x_1,\ldots, x_n]]$. Consider a Galois representation $\varrho:G_\Sigma \rightarrow \Gl_d(\RRR)$. Let $\LLL_\varrho$ denote the free $\RRR$-module on which $G_\Sigma$ acts to let us obtain the Galois representation $\varrho$. Let $\D_\varrho$ denote the discrete $\RRR$-module $\LLL_\varrho \otimes_\RRR \Hom_\cont\left(\RRR,\frac{\Q_p}{\Z_p}\right)$. We will consider the following discrete $\RRR$-module:
\begin{align*}
\Sha^1\left(K,\D_{\varrho}\right) := \ker\left(H^1\left(K_\Sigma/K, \D_\varrho \right) \rightarrow \prod_{\nu \in S} H^1\left(K_\nu, \D_{\varrho} \right) \right).
\end{align*}

The $\RRR$-module $\Sha^1\left(K,\D_\varrho\right)^\vee$ turns out to be finitely generated. One can ask the following question, which involves finer information about the structure of $\Sha^1\left(K,\D_\varrho\right)^\vee$ as an $\RRR$-module:

\begin{question}\label{ques:pn}
Under what hypotheses on $\varrho$, is the $\RRR$-module $\displaystyle \Sha^1\left(K,\D_\varrho\right)^\vee$ pseudo-null?
\end{question}

We recall three situations when $\displaystyle \Sha^1\left(K,D_\varrho\right)^\vee$ is conjectured to be pseudo-null. In all these situations, the Galois representation arises as a cyclotomic twist deformation.\\

Let $\widetilde{K}_\infty$ denote the compositum of all the $\Z_p$-extensions of $K$. Let $\widetilde{\Gamma}_K$ denote $\Gal{\widetilde{K}_\infty}{K}$. The Galois group $\widetilde{\Gamma}_K$ is a finitely generated $\Z_p$-module with rank (say $r$) greater than or equal to $r_2(K)+1$. Assuming Leopoldt's conjecture, this rank equals $r_2(K)+1$. Here, $r_2(K)$ denotes the number of pairs of complex embeddings of the number field $K$. Let $\Sigma$ denote the set of primes in $K$ containing all the primes lying above $p$ and $\infty$. Let $\widetilde{\kappa}:G_\Sigma \twoheadrightarrow \widetilde{\Gamma}_K \rightarrow \Gl_1\left(\Z_p[[\widetilde{\Gamma}_K]\right) $ denote the tautological character taking values in the Iwasawa algebra $\Z_p[[\widetilde{\Gamma}_K]]$, which is isomorphic to the power series ring $\Z_p[[x_1,\ldots, x_r]]$. We have the following pseudo-nullity conjecture, which follows from a conjecture of Greenberg (Conjecture 3.5 in \cite{MR1846466}):

\begin{conjecture} \label{conj:greenberg}
The $\Z_p[[\widetilde{\Gamma}_K]]$-module $\Sha^1\left(K,\D_{\widetilde{\kappa}^{-1}}\right)^\vee$ is pseudo-null.
\end{conjecture}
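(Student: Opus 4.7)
The plan is to reinterpret the conjecture in classical Iwasawa-theoretic terms and then attempt pseudo-nullity via duality and descent; I should be candid that the statement is widely open, so the sketch describes a strategy rather than a complete argument. First, by the standard ``twist trick'' (Shapiro's lemma applied to the pro-$p$ tower $\widetilde{K}_\infty/K$), the global cohomology $H^i(G_\Sigma, \D_{\widetilde{\kappa}^{-1}})$ may be identified with the cohomology $H^i$ of the maximal extension of $\widetilde{K}_\infty$ unramified outside $\Sigma$ with values in $\Q_p/\Z_p$, where the $\Z_p[[\widetilde{\Gamma}_K]]$-module structure is induced by the $\widetilde{\Gamma}_K$-action. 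The Pontryagin dual of the resulting $H^1$ is the classical Iwasawa module $X_\Sigma(\widetilde{K}_\infty)$, namely the Galois group of the maximal abelian pro-$p$ extension of $\widetilde{K}_\infty$ unramified outside the primes above $\Sigma$. A parallel analysis of the local conditions shows that the $\Sha^1$ subgroup corresponds to requiring complete splitting at every prime of $\widetilde{K}_\infty$ above $\Sigma$, so the conjecture translates into pseudo-nullity of the ``completely split'' Iwasawa module $X_\Sigma^{\mathrm{spl}}(\widetilde{K}_\infty)$ over $\Z_p[[\widetilde{\Gamma}_K]]$.

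Once reformulated, the natural tool is Jannsen's spectral sequence for Iwasawa adjoints together with Poitou--Tate duality. Pseudo-nullity of $X_\Sigma^{\mathrm{spl}}(\widetilde{K}_\infty)$ is equivalent to the vanishing of $\mathrm{Ext}^i_{\Z_p[[\widetilde{\Gamma}_K]]}(X_\Sigma^{\mathrm{spl}}(\widetilde{K}_\infty), \Z_p[[\widetilde{\Gamma}_K]])$ for $i=0,1$, and Jannsen's computation expresses these adjoints through the global $H^2$ of $G_\Sigma$ with appropriate twisted coefficients together with the local $H^2$'s at primes in $\Sigma$. The local contributions are typically controllable: the decomposition subgroups inside $\widetilde{\Gamma}_K$ at primes above $p$ have large enough $\Z_p$-rank that the corresponding local Iwasawa modules are already supported in codimension at least two. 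The remaining step would be to establish the required codimension bound on the global $H^2$, which one attempts by descending along individual $\Z_p$-extensions inside $\widetilde{K}_\infty/K$ and relating the result to known one-variable Iwasawa invariants.

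The main obstacle is that this reduction eventually lands on Greenberg's classical conjecture asserting that, for a totally real number field, the Iwasawa $\lambda$- and $\mu$-invariants of the $p$-class group tower along the cyclotomic $\Z_p$-extension both vanish --- and this is open in essentially all cases beyond explicit numerical verifications. Ferrero--Washington settles $\mu=0$ for abelian $K$, but $\lambda=0$ is unknown even for cyclotomic fields in general, and the multi-variable enhancement over $\Z_p[[\widetilde{\Gamma}_K]]$ can only be harder. Thus a realistic objective here is not a complete proof but verification in favorable special settings (CM or totally real fields of small degree, regular primes, or configurations in which $\widetilde{K}_\infty/K$ has a unique prime above $p$) by combining $\mu=0$ with delicate descent along non-cyclotomic $\Z_p$-extensions, following templates of Minardi, McCallum, Ozaki, and Hachimori--Sharifi.
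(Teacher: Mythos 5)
You should first note what the paper actually does with this statement: it is labeled a \emph{conjecture}, not a theorem. The paper gives no proof of it and does not claim one; it only remarks that it follows from Greenberg's Conjecture 3.5 in \cite{MR1846466}, together with the observation (made in a remark immediately afterwards) that the difference between the ``locally trivial at all of $\Sigma$'' condition defining $\Sha^1$ and Greenberg's ``unramified'' condition is only relevant at primes above $p$, because the representation is a cyclotomic twist deformation. So there is no proof in the paper against which your argument can be measured, and your own candid admission that you are offering a strategy rather than a proof is the correct assessment: the statement is wide open.

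As for the strategy itself: the first reduction (Shapiro's lemma, identifying $H^1(G_\Sigma,\D_{\widetilde{\kappa}^{-1}})^\vee$ with the Iwasawa module $X_\Sigma(\widetilde{K}_\infty)$ and $\Sha^1(K,\D_{\widetilde{\kappa}^{-1}})^\vee$ with the completely split quotient) is sound and is essentially the translation implicit in the paper's remark relating the conjecture to Greenberg's; likewise the characterization of pseudo-nullity via vanishing of $\mathrm{Ext}^0$ and $\mathrm{Ext}^1$ over the regular ring $\Z_p[[\widetilde{\Gamma}_K]]$ is standard. The genuine gap, beyond the acknowledged openness, is in your last paragraph: the conjecture here concerns a general number field $K$ (the applications in the paper are to imaginary quadratic $K$ with $p$ split, where $\Z_p[[\widetilde{\Gamma}_K]]$ has two variables), and the relevant open statement is Greenberg's \emph{generalized} pseudo-nullity conjecture over the multivariable Iwasawa algebra. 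It does not ``land on'' the classical Greenberg conjecture for totally real fields ($\lambda=\mu=0$ along the cyclotomic $\Z_p$-extension); that conjecture is the specialization of the generalized one to totally real $K$ (where, under Leopoldt, $\widetilde{K}_\infty$ is the cyclotomic $\Z_p$-extension), and no reduction of the multivariable statement for, say, imaginary quadratic $K$ to the totally real case is known. So the proposed descent along individual $\Z_p$-extensions would not suffice even granting the totally real conjecture; pseudo-nullity over $\Z_p[[\widetilde{\Gamma}_K]]$ is strictly stronger than torsion-ness along each line of descent, and controlling the error terms in such a descent is precisely where known approaches (Minardi, Ozaki, Hachimori--Sharifi, etc.) only succeed in special configurations.
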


Now suppose that $K$ is an imaginary quadratic field where the prime $p$ splits. Consider a finite continuous character $\chi:\Gal{\overline{\Q}}{K} \rightarrow \Z_p^\times$. Enlarge the set $\Sigma$ to also contain all the primes ramified in the extension $\overline{\Q}^{\ker(\chi)}/K$. The Iwasawa algebra $\Z_p[[\widetilde{\Gamma}_K]]$ is isomorphic to the power series ring $\Z_p[[x_1,x_2]]$. Consider the Galois representation $\chi\widetilde{\kappa}^{-1}:G_\Sigma \rightarrow \Gl_1\left(\Z_p[[\widetilde{\Gamma}_K]]\right)$.  One of the main results (Theorem 5.2.5 in \cite{bleher2015higher}) in the seven author paper \cite{bleher2015higher} is motivated by the following pseudo-nullity conjecture (which one can view as a modification of Conjecture \ref{conj:greenberg}):

\begin{conjecture}[Conjecture 3.4.1 in \cite{bleher2015higher}] \label{conj:sevenauthor}
The $\Z_p[[\widetilde{\Gamma}_K]]$-module $\Sha^1\left(K,\D_{\chi\widetilde{\kappa}^{-1}}\right)^\vee$ is pseudo-null.
\end{conjecture}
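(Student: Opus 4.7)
The plan is to approach Conjecture~\ref{conj:sevenauthor} by combining Poitou--Tate global duality with the available pseudo-nullity and growth results for ideal class groups along the $\Z_p^2$-extension $\widetilde{K}_\infty/K$.

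First, I would invoke the Poitou--Tate exact sequence for the Galois module $\D_{\chi\widetilde{\kappa}^{-1}}$ over $K$ to identify $\Sha^1(K,\D_{\chi\widetilde{\kappa}^{-1}})^\vee$ (up to pseudo-isomorphism) with a strict Greenberg Selmer module for the Tate dual character. Because $p$ splits in $K$ as $\mathfrak{p}\overline{\mathfrak{p}}$, the two primes above $p$ pick out independent $\Z_p$-directions inside $\widetilde{\Gamma}_K$; this produces a clean factorization of the local cohomology terms at $p$, and because $\chi$ is a finite character the local conditions at each $v \in \Sigma$ are tame enough that they should contribute no height-one component of support to the dual Selmer module.

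Second, I would carry out an Iwasawa-theoretic descent along the tower $\widetilde{K}_\infty/K$. A standard control-theorem argument identifies the Pontryagin dual of $\Sha^1$ with the $\chi$-isotypic component of the projective limit $\varprojlim_n A_{L_n}$ of $p$-parts of ideal class groups of the finite layers $L_n \subset \widetilde{K}_\infty$. Pseudo-nullity as a $\Z_p[[\widetilde{\Gamma}_K]]$-module then amounts to showing that the support of this inverse limit has codimension at least two inside the three-dimensional regular local ring $\Z_p[[\widetilde{\Gamma}_K]] \cong \Z_p[[x_1,x_2]]$ --- equivalently, that no height-one prime of this ring contains the annihilator.

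Finally, one would feed in the known growth results: Ferrero--Washington ($\mu=0$) in the cyclotomic direction, the anticyclotomic Iwasawa main conjecture (established in many cases through work of Hida, Vatsal, Bertolini--Darmon, and Pollack--Weston) in the anticyclotomic direction, and the verifications of Greenberg's generalized conjecture for imaginary quadratic fields due to Minardi and Ozaki. The hard part, and the essential reason this conjecture remains open, is upgrading control of the two distinguished $\Z_p$-directions to genuine codimension-two vanishing: one must rule out \emph{every} height-one prime of the Iwasawa algebra, not just the ones corresponding to the cyclotomic or anticyclotomic specialization. A plausible route is to combine Katz's two-variable $p$-adic $L$-function with a twisted version of Rubin's main conjecture for $K$ to pin down the characteristic ideal, together with a careful analysis of the ramification of $\chi$ to eliminate extraneous height-one components coming from the primes dividing the conductor of $\chi$.
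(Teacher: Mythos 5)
There is a fundamental mismatch here: the statement you are addressing is a \emph{conjecture} (Conjecture 3.4.1 of \cite{bleher2015higher}, restated as Conjecture \ref{conj:sevenauthor}), which this paper does not prove and does not claim to prove --- it is recalled only as motivation for the higher-codimension results, exactly because its resolution is out of reach. Your proposal is accordingly not a proof but a strategy sketch, and you concede this yourself when you write that ``the hard part, and the essential reason this conjecture remains open,'' is passing from control in the cyclotomic and anticyclotomic directions to genuine codimension-two vanishing. That concession is the whole problem: every input you list (Ferrero--Washington, anticyclotomic main conjectures, Minardi and Ozaki's special-case verifications, Rubin's main conjecture with Katz's two-variable $p$-adic $L$-function) is codimension-one information. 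A main conjecture identifies a characteristic ideal, i.e.\ it pins down the divisor of the relevant Iwasawa module in $Z^1\left(\Z_p[[\widetilde{\Gamma}_K]]\right)$, but pseudo-nullity of $\Sha^1\left(K,\D_{\chi\widetilde{\kappa}^{-1}}\right)^\vee$ is the assertion that a certain module has \emph{no} height-one support at all, and no amount of characteristic-ideal bookkeeping at the two distinguished $\Z_p$-directions rules out an extraneous height-one prime elsewhere in the two-dimensional weight space. Indeed, the obstruction you would need to overcome is precisely what this paper (following \cite{bleher2015higher,lei2018codimension}) isolates as the hypothesis \ref{hyp:gcd}: one needs the pair of relevant $p$-adic $L$-functions (in the imaginary quadratic setting, the two Katz $p$-adic $L$-functions attached to the two primes above $p$) to generate an ideal of height at least two, i.e.\ to share no common height-one divisor, and this coprimality is itself open.

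Two further technical points in your sketch would need repair even as a conditional argument. First, the identification of $\Sha^1\left(K,\D_{\chi\widetilde{\kappa}^{-1}}\right)^\vee$ with a $\chi$-component of $\varprojlim_n A_{L_n}$ is not correct as stated: the fine (strict-everywhere) Selmer condition imposes local triviality, not merely unramifiedness, at the primes above $p$, so the relevant limit module is the Galois group of the maximal abelian pro-$p$ extension unramified everywhere and completely split at the primes above $p$, twisted appropriately; the paper itself flags this distinction in the remark following Conjecture \ref{conj:coates-sujatha}, noting that Conjecture \ref{conj:greenberg} only \emph{follows from} Greenberg's unramified-version conjecture. Second, ``tame enough local conditions at $v\in\Sigma$ contribute no height-one support'' is not automatic and in any case irrelevant to the global height-one support you must exclude. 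So the proposal neither matches a proof in the paper (there is none) nor closes the gap that keeps the statement conjectural.
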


Keeping the same notations as above, let $E$ denote an elliptic curve defined over $\Q$. Let $\Sigma$ denote the finite set of primes in $K$ containing all the primes lying above $p$, $\infty$ and all the primes where the elliptic curve $E$ has bad reduction. Let $T_p(E)$ denote the $p$-adic Tate module. Let $\rho_{E,\widetilde{\kappa}^{-1}}:G_\Sigma \rightarrow \Gl_2(\Z_p[[\widetilde{\Gamma}_K]])$ denote the Galois representation given by the action of $G_\Sigma$ on $T_p(E) \  \hotimes  \ \Z_p[[\widetilde{\Gamma}_K]](\widetilde{\kappa}{^{-1}})$. The main result in an earlier work of ours (Theorem 1 in \cite{lei2018codimension}) is motivated by the following pseudo-nullity conjecture, which is a special case of a conjecture due to Coates and Sujatha (Conjecture B in \cite{MR2148798}):

\begin{conjecture} \label{conj:coates-sujatha}
The $\Z_p[[\widetilde{\Gamma}_K]]$-module $\Sha^1\left(K,\D_{\rho_{E,\widetilde{\kappa}^{-1}}}\right)^\vee$ is pseudo-null.
\end{conjecture}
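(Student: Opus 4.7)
The plan is to approach this through Greenberg's framework for pseudo-nullity over multi-variable Iwasawa algebras. Since $\widetilde{\Gamma}_K \cong \Z_p^2$ for an imaginary quadratic field $K$ (granting Leopoldt's conjecture), the ring $\Z_p[[\widetilde{\Gamma}_K]]$ is a two-dimensional regular local ring, so pseudo-nullity of a finitely generated torsion module is equivalent to its support being zero-dimensional, i.e., being finite up to pseudo-isomorphism. Thus one must detect that the fine Selmer group localizes to zero at every height-one prime.

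First, I would use the Poitou-Tate exact sequence to relate $\Sha^1(K, \D_{\rho_{E,\widetilde{\kappa}^{-1}}})^\vee$ to the global cohomology group $H^2(G_\Sigma, \D_{\rho_{E,\widetilde{\kappa}^{-1}}})$ and the restriction maps to local cohomology groups $H^1(K_\nu, \D_{\rho_{E,\widetilde{\kappa}^{-1}}})$ for $\nu \in \Sigma$. Invoking Greenberg's surjectivity results from \cite{greenberg2010surjectivity} (together with the hypothesis that the residual representation is absolutely irreducible with appropriate local behavior) to force $H^2(G_\Sigma, \D_{\rho_{E,\widetilde{\kappa}^{-1}}}) = 0$ would reduce the pseudo-nullity question to a careful analysis of the cokernel of the localization map on $H^1$, together with the structure of the local duals as $\Z_p[[\widetilde{\Gamma}_K]]$-modules.

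Next, I would decompose $\widetilde{K}_\infty$ as the compositum of the cyclotomic $\Z_p$-extension $K_\Cyc$ and an anti-cyclotomic $\Z_p$-extension $K_{\mathrm{ac}}$, and try to establish torsionness along a Zariski-dense family of height-one primes in $\mathrm{Spec}\, \Z_p[[\widetilde{\Gamma}_K]]$. Along the cyclotomic direction, Kato's Euler system gives torsionness of the Pontryagin dual of the classical Selmer group, and in many cases pseudo-nullity of the dual fine Selmer group over $\Z_p[[\Gal{K_\Cyc}{K}]]$ itself (results of Wuthrich, Hachimori--Ochiai, etc.). Using Mazur-style control theorems twisted by characters of $\Gal{K_{\mathrm{ac}}}{K}$, one would hope to promote this to torsionness of $\Sha^1(K, \D_{\rho_{E,\widetilde{\kappa}^{-1}}})^\vee$ at every height-one prime of $\Z_p[[\widetilde{\Gamma}_K]]$ arising from such a twist.

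The main obstacle is the anti-cyclotomic direction: for a non-CM elliptic curve, there is no known Euler system along $K_{\mathrm{ac}}$ analogous to Kato's that controls the Selmer group, so one cannot directly prove torsionness of the fine Selmer group over the anti-cyclotomic $\Z_p$-extension, let alone the sharper vanishing needed for height-one localizations. Without such input, pseudo-nullity over the two-variable Iwasawa algebra seems inaccessible by direct extension of the existing one-variable techniques, and this is precisely why Conjecture \ref{conj:coates-sujatha} remains open in the non-CM case (see the discussion in \cite{MR2148798}). This is the reason the authors' earlier work \cite{lei2018codimension}, and the present paper, proceed by extracting codimension-two invariants under the assumption of suitable torsionness rather than attempting to prove the conjecture outright.
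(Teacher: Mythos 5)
There is nothing here to check against a proof in the paper, because the statement you were given is not a theorem of the paper at all: it is Conjecture \ref{conj:coates-sujatha}, a special case of Conjecture B of Coates and Sujatha \cite{MR2148798}, and the authors state it purely as motivation (it is what prompted Theorem 1 of \cite{lei2018codimension} and, indirectly, the present paper); no proof is offered or claimed anywhere in the text. Your write-up, accordingly, is not a proof either, and to your credit it does not pretend to be: after sketching a plausible line of attack (Poitou--Tate, vanishing of $H^2$, torsionness along height-one primes coming from the cyclotomic direction via Kato's Euler system and control theorems), you correctly identify the anti-cyclotomic direction as the fatal obstruction for non-CM curves and conclude that the conjecture remains open. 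That is exactly the state of the art and is consistent with how the paper treats the statement, namely as an open pseudo-nullity conjecture under which (or under surrogate hypotheses such as \ref{hyp:gcd}) one extracts codimension-two information.

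Two small corrections to the framing of your sketch. First, with $\widetilde{\Gamma}_K\cong\Z_p^2$ the ring $\Z_p[[\widetilde{\Gamma}_K]]\cong\Z_p[[x_1,x_2]]$ is a regular local ring of Krull dimension three, not two; pseudo-nullity of a finitely generated module therefore means that its support has codimension at least two (equivalently, its localization at every height-one prime vanishes, as you say later), not that the module is finite up to pseudo-isomorphism. Second, Leopoldt's conjecture need not be ``granted'' here: for an imaginary quadratic field it is a theorem (it holds for all abelian number fields), so $\rank_{\Z_p}\widetilde{\Gamma}_K=r_2(K)+1=2$ unconditionally. Neither point changes your overall (correct) conclusion that the conjecture is not provable by the methods you describe.
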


\begin{remark}
Note that the statement of Conjecture \ref{conj:greenberg} differs from the statement of Conjecture 3.5 in \cite{MR1846466}. The definition of the fine Selmer group $\Sha^1\left(F,\D_{\widetilde{\kappa}^{-1}}\right)$ involves global cocycles in $H^1\left(G_\Sigma,\D_\varrho\right)$ that are ``locally trivial'' at all primes in $S$, whereas in \cite{MR1846466}, Greenberg deals with the subgroup of $H^1\left(K_\Sigma/K,\D_\varrho\right)$ generated by cocycles that are ``unramified'' at all primes in $\Sigma$. Since we are dealing with cyclotomic twist deformations, this distinction is only significant for primes lying above $p$. Nevertheless, Conjecture \ref{conj:greenberg} follows from Conjecture 3.5 in \cite{MR1846466}. Similarly, the statement of Conjecture \ref{conj:sevenauthor} differs from Conjecture 3.4.1 in \cite{bleher2015higher}.
\end{remark}

\begin{remark}
For a general Galois representation $\varrho$, the definition of the fine Selmer group (as defined in Greenberg's work \cite{MR2290593}) would depend on the choice of set $\Sigma$. Since we are only interested in cases when the Galois representation $\varrho$ is a cyclotomic twist deformation, the definition of the fine Selmer group is independent of the set $\Sigma$ as long as the set $\Sigma$ contains the primes lying above $p$, $\infty$ and all the primes of $F$ ramified in the extension $\overline{\Q}^{\ker(\varrho)}/F$.
\end{remark}

Let us follow all the notations used in the statement in the setup of Theorem \ref{maintheorem}. Motivated by these pseudo-nullity conjectures, one can consider the following question:

\begin{question} \label{ques:pn_Hida}
Is the $R$-module $\Sha^1\left(\Q,D_{\pmb{d},n}\right)^\vee$ pseudo-null?
\end{question}

A more speculative question to investigate would be the following:
\begin{question} \label{ques:gcd_Hida}
Is the height of the ideal $(\theta^{I}_{\pmb{d},n}, \theta^\Two_{\pmb{d},n})$ in the ring $R$ greater than or equal to two?
\end{question}

Since we have no numerical evidence towards an affirmative answer for Question \ref{ques:pn_Hida} or Question \ref{ques:gcd_Hida}, we avoid making any conjectures in general. We are especially tentative about considering Question \ref{ques:gcd_Hida}. Nevertheless, it seems instructive to us to work under a setup when Question \ref{ques:gcd_Hida} has an affirmative answer since, assuming the Iwasawa-Greenberg main conjectures, an affirmative answer for Question \ref{ques:gcd_Hida} would imply an affirmative answer for Question \ref{ques:pn_Hida}.

\begin{remark}
Under the setting and hypotheses of Theorem \ref{maintheorem}, we have natural surjections $\mathcal{Z}(\Q,D_{\pmb{d},n}) \twoheadrightarrow \Sha^1\left(\Q,D_{\pmb{d},n}\right)^\vee$ and $\mathcal{Z}(\Q,D^\star_{\pmb{d},n}) \twoheadrightarrow \Sha^1\left(\Q,D^\star_{\pmb{d},n}\right)^\vee$. So, the $R$-modules $\Sha^1\left(\Q,D_{\pmb{d},n}\right)^\vee$ and $\Sha^1\left(\Q,D^\star_{\pmb{d},n}\right)^\vee$ turn out to be pseudo-null.
\end{remark}

\subsection{Concluding remarks}

As we remarked earlier, the validity of the Iwasawa main conjectures will allow us to entirely work on the algebraic side of Iwasawa theory. For this reason, in this paper we will completely avoid specifying the interpolation properties that the $p$-adic $L$-functions need to satisfy at the critical set of specializations. For the $3$-variable Rankin-Selberg $p$-adic $L$-function associated to the cyclotomic twist deformation of the tensor product of two Hida families, we refer to Hida's constructions in his works \cite{hida1985ap,hida1988p,Hidafourier, hida1993elementary}.  As we mentioned earlier, the construction of the $4$-variable $p$-adic $L$-functions associated to the cyclotomic twist deformation of the tensor product of three Hida families is either forthcoming or still conjectural. In light of Question \ref{ques:gcd_Hida}  and the pseudo-nullity conjectures mentioned in Section \ref{sec:pseudonull}, the connection to $p$-adic $L$-functions is still important to us since explicit computations of these $p$-adic $L$-functions may provide a numerical approach to verify the pseudo-nullity conjectures. Though we haven't seriously pursued the idea to numerically verify the pseudo-nullity conjectures in the setup of tensor product of Hida families, it is conceivable that algorithms of Alan Lauder \cite{MR3381453} computing explicit values of $p$-adic Rankin $L$-functions may play an important role.

For the tensor product of three Hida families, Theorem \ref{maintheorem} is not applicable when the $4$-variable $p$-adic $L$-functions are both unbalanced since our methods in \cite{lei2018codimension} do not allow us to handle this case. See Remark \ref{rem:two_unb}. It is also possible for Assumption \ref{hyp:gcd} to fail if we consider a pair of unbalanced $p$-adic $L$-functions.  As an example of a situation when they may have a common height one prime ideal in their support, consider the case when the global root number for the functional equation for the triple product complex $L$-function at the unbalanced weights equals -1 (for instance in the setting of \cite[Theorem B]{hsieh2016hida}), since then the complex $L$-function  would vanish identically at the central value.

To avoid a case-by-case analysis of the reduction types of the Hida families $F$, $G$ and $H$ at primes $l \in \Sigma_0$, we have not explicitly computed the local fudge factors at primes $l \in\Sigma_0$. We refer the reader who is interested in such computations to our earlier work \cite{lei2018codimension}, where we computed similar local fudge factors in the setting of an elliptic curve over $\Q$ with supersingular reduction at $p$ over the compositum of $\Z_p$-extensions of an imaginary quadratic field $K$ where the prime $p$ splits.

\section{Structural properties of Selmer groups} \label{sec:selmer_struc}

Let $\RRR$ be a finite integral extension of $\Z_p[[u_1,\dotsc,u_n]]$ and assume that it is an integrally closed domain. Consider a Galois representation $\varrho : G_\Sigma \rightarrow \Gl_d(\RRR)$, whose associated Galois lattice $\LLL_\varrho$ is free over $\RRR$. Let $d^\pm$ denote the dimension of the $\pm$1-eigenspace for the action of complex conjugation. Suppose also that we can associate a $\Gal{\overline{\Q}_p}{\Q_p}$-equivariant short exact sequence of free $\RRR$-modules:
\begin{align} \label{eq:fil-rho}
\tag{Fil-$\varrho$} 0\rightarrow \Fil\LLL_\varrho \rightarrow \LLL_\varrho \rightarrow \frac{\LLL_\varrho}{\Fil\LLL_\varrho} \rightarrow 0,
\end{align}
Suppose also that the following condition holds:
\begin{align} \label{p-critical}
\tag{$p$-critical}
\mathrm{Rank}_{\RRR}\left(\Fil \LLL_{\varrho}\right) = d^+.
\end{align}

Let $\varrho^\star:G_\Sigma \rightarrow \Gl_d(\RRR)$ denote the Galois representation given by the action of $G_\Sigma$ on the free $\RRR$-module $\LLL^\star_{\varrho}:=\Hom_{\RRR}\left(\LLL_{\varrho}, \RRR(\chi_p)\right)$. Here, $\chi_p:G_\Sigma \twoheadrightarrow \Gal{\Q(\mu_{p^\infty})}{\Q} \xrightarrow \cong  \Z_p^\times$ is the $p$-adic cyclotomic character given by the action of $G_\Sigma$ on the $p$-power roots of unity $\mu_{p^\infty}$. The Galois representation $\varrho^\star$ is referred to as the Tate dual of $\varrho$. We have the following short exact sequence of free $\RRR$-modules that is $\Gal{\overline{\Q}_p}{\Q_p}$-equivariant:
\begin{align*}
0\rightarrow \Fil \LLL^\star_{\varrho} \rightarrow \LLL^\star_{\varrho} \rightarrow \frac{\LLL^\star_{\varrho}}{\Fil \LLL^\star_{\varrho}} \rightarrow 0, \quad \text{where } \Fil \LLL^\star_{\varrho} := \Hom_\RRR\left(\frac{\LLL_{\varrho}}{\Fil \LLL_{\varrho}}, \RRR(\chi_p)\right).
\end{align*}

Thus, we naturally obtain a filtration on $\varrho^\star$  that also satisfies (\ref{p-critical}). We will consider the following discrete $\RRR$-modules:
\begin{align*}
\DDD_\varrho&:= \LLL_{\varrho} \otimes_\RRR \Hom_\cont\left(\RRR, \frac{\Q_p}{\Z_p}\right), \qquad && \Fil \DDD_\varrho:= \Fil \LLL_{\varrho} \otimes_\RRR \Hom_\cont\left(\RRR, \frac{\Q_p}{\Z_p}\right),  \\
\DDD^\star_\varrho&:= \LLL^\star_{\varrho} \otimes_\RRR \Hom_\cont\left(\RRR, \frac{\Q_p}{\Z_p}\right), \qquad && \Fil \DDD^\star_\varrho:= \Fil \LLL^\star_{\varrho} \otimes_\RRR \Hom_\cont\left(\RRR, \frac{\Q_p}{\Z_p}\right).
\end{align*}

\begin{remark}
Following \cite{greenberg1994iwasawa}, one can formulate the Iwasawa-Greenberg main conjecture for the Galois representation $\varrho$ when it satisfies the ``Panchishkin condition''. The Panchishkin condition involves a filtration, such as the one given in equation (\ref{eq:fil-rho}), satisfying (\ref{p-critical}) along with an ``ample'' critical set of ``motivic'' specializations $C \subset \Hom_{\cont}\left(\RRR,\overline{\Q}_p\right)$. We refer the interested reader to Section 4 of \cite{greenberg1994iwasawa} for a precise description of the Panchishkin condition.
\end{remark}

\subsection{Non-primitive Selmer groups and principal divisors}

We shall define a discrete non-primitive Selmer group $\Sel^{\Sigma_0}\left(\Q,\D_{\varrho}\right)$ and a discrete ``strict'' non-primitive Selmer group $\Sel^{\Sigma_0, \str}\left(\Q,\D_{\varrho}\right)$ as follows:
\begin{align*}
  &\Sel^{\Sigma_0}\left(\Q,\D_{\varrho}\right):=\ker\bigg(H^1\left(G_\Sigma, \D_{\varrho}\right)  \xrightarrow {\phi^{\Sigma_0}}  \frac{H^1\left(\Q_p,\D_{\varrho}\right)}{\Loc\left(\Q_p,\D_{\varrho}\right)}  \bigg), \\
   & \Sel^{\Sigma_0,\str}\left(\Q,\D_{\varrho}\right):=\ker\bigg(H^1\left(G_\Sigma, \D_{\varrho}\right)  \xrightarrow {\phi_\str^{\Sigma_0}}  \frac{H^1\left(\Q_p,\D_{\varrho}\right)}{\Loc_{\mathrm{str}}\left(\Q_p,\D_{\varrho}\right)}  \bigg).
\end{align*}
Here, the local conditions at $p$ are given below ($I_p$ denote the inertia subgroup at $p$):
\begin{align*}
&\Loc(\Q_p,\D_{\varrho}):=\ker\left(H^1\left(\Q_p,\D_{\varrho}\right) \rightarrow H^1\left(I_p,\frac{\D_{\varrho}}{\Fil \D_{\varrho}}\right) \right), \\ &\Loc_{\mathrm{str}}(\Q_p,\D_{\varrho}):=\ker\left(H^1\left(\Q_p,\D_{\varrho}\right) \rightarrow H^1\left(\Q_p,\frac{\D_{\varrho}}{\Fil \D_{\varrho}}\right) \right).
\end{align*}

\begin{proposition} \label{prop:prin_div}
Suppose that the $\RRR$-module $\Sel^{\Sigma_0}\left(\Q,\D_{\varrho}\right)^\vee$ is torsion. In addition, suppose that the following conditions hold:
\begin{enumerate}
\item\label{cond:one} The $\RRR$-modules $H^0\left(\Q_\Sigma/\Q, \D_\varrho\right)^\vee$ and $H^0\left(I_p,\frac{\D_\varrho}{\Fil \D_\varrho}\right)^\vee$ are torsion.
\item\label{cond:two} $H^2\left(\Q_\Sigma/\Q,\D_\varrho\right)= H^2\left( \Q_p,\Fil \D_\varrho\right) = H^2\left( \Q_p,\frac{\D_\varrho}{\Fil \D_\varrho}\right)=0$.
\item\label{cond:three} There exists a finite prime $l \neq p$ such that $H^2\left(\Q_{l},\D_\varrho\right)=0$.
\end{enumerate}
Then, the following divisor  in $Z^1\left(\RRR\right)$ is principal:
\begin{align*}
\Div\left(\Sel^{\Sigma_0}(\Q,\D_\varrho)^\vee\right)  - \Div\left(H^0\left(\Q_\Sigma/\Q, \D_\varrho\right)^\vee\right).
\end{align*}
\end{proposition}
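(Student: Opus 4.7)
The plan is to exhibit the divisor in the statement as the alternating sum of divisors of cohomology groups of a single perfect complex of $\RRR$-modules with torsion cohomology, and then use that $\RRR$ is local to conclude principality. The key $K$-theoretic input, flagged in the introduction, is that when $\RRR$ is a local Noetherian integrally closed domain, $\mathrm{Pic}(\RRR) = 0$, so the determinant line of a perfect complex is free of rank one; equivalently, the connecting map $\partial : K_1(\mathrm{Frac}(\RRR)) \rightarrow K_0(\RRR, \mathrm{Frac}(\RRR))$ in the localization sequence is surjective, and any perfect complex with $\mathrm{Frac}(\RRR)$-acyclic cohomology has a preimage in $K_1(\mathrm{Frac}(\RRR)) = \mathrm{Frac}(\RRR)^\times_{\mathrm{ab}}$ whose image under the divisor map is the alternating sum of divisors of its cohomology.

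First I would invoke the surjectivity results of Greenberg (\cite{MR2290593, greenberg2010surjectivity}) to upgrade the defining inclusion of the non-primitive Selmer group to a short exact sequence
$$0 \rightarrow \Sel^{\Sigma_0}(\Q, \D_{\varrho}) \rightarrow H^1(G_\Sigma, \D_{\varrho}) \xrightarrow{\phi^{\Sigma_0}} \frac{H^1(\Q_p, \D_{\varrho})}{\Loc(\Q_p, \D_{\varrho})} \rightarrow 0.$$
Condition (3), supplying a single prime $l \neq p$ with $H^2(\Q_l, \D_\varrho) = 0$, is precisely what is needed to control the global $H^2$ in the Poitou--Tate exact sequence and deduce surjectivity of $\phi^{\Sigma_0}$; condition (2) kills the remaining $H^2$-obstructions --- globally and at the two $p$-local pieces --- that appear in the long exact sequences used below. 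I would then splice this sequence with the long exact cohomology sequences attached to the filtration $0 \rightarrow \Fil \D_\varrho \rightarrow \D_\varrho \rightarrow \D_\varrho/\Fil \D_\varrho \rightarrow 0$ (globally for $G_\Sigma$ and locally for $G_{\Q_p}$ and $I_p$) and take Pontryagin duals throughout. Hypotheses (1) and (2), together with the torsion assumption on $\Sel^{\Sigma_0}(\Q,\D_\varrho)^\vee$, ensure that every term in the resulting long exact sequence of finitely generated $\RRR$-modules is either torsion or a free-rank piece that cancels against its neighbour in the Euler characteristic.

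That long exact sequence is the cohomology sequence of a Selmer complex
$$\mathrm{SC}^\bullet \;:=\; \mathrm{fiber}\!\left( R\Gamma(G_\Sigma, \LLL_\varrho) \longrightarrow R\Gamma(I_p, \LLL_\varrho/\Fil \LLL_\varrho) \right)$$
in the derived category of $\RRR$-modules, where $\LLL_\varrho$ is the compact Galois lattice underlying $\varrho$. Because $\LLL_\varrho$ is free over $\RRR$ and both $G_\Sigma$ and $I_p$ have finite $p$-cohomological dimension on finitely generated $\RRR$-modules, $\mathrm{SC}^\bullet$ is a perfect complex; by the bookkeeping above, its cohomology is torsion and the alternating sum of divisors equals $\Div\!\left(\Sel^{\Sigma_0}(\Q,\D_{\varrho})^\vee\right) - \Div\!\left(H^0(\Q_\Sigma/\Q, \D_{\varrho})^\vee\right)$. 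Principality then follows from the determinant formalism of the first paragraph. I expect the main obstacle to be the bookkeeping step: carefully matching the cohomology of $\mathrm{SC}^\bullet$ with the Galois cohomology groups appearing in Greenberg's sequences (using Poitou--Tate duality to translate between the discrete coefficients $\D_\varrho$ and the compact coefficients $\LLL_\varrho$), and verifying that hypotheses (1)--(3) eliminate every contribution that would otherwise produce a non-torsion term or a spurious divisor. The asymmetric local treatment --- no condition at primes in $\Sigma_0$ and the Panchishkin filtration at $p$ --- must be exactly matched by the choice of Selmer complex so that only the non-primitive Selmer dual and the $H^0$-term survive in the final accounting.
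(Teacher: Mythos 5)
Your overall strategy is the same as the paper's at the level of the $K$-theoretic endgame: the paper's Lemma \ref{lem:K0eul_ch} is exactly your observation that over the local ring $\RRR$ the connecting map $K_1(\mathrm{Frac}(\RRR))\rightarrow K_0(\RRR,\mathrm{Frac}(\RRR))$ is surjective, and your use of Greenberg's surjectivity results together with conditions (2) and (3) is also how the paper proceeds. The gap is in the single complex you propose to feed into this machine. The object $R\Gamma\left(I_p,\LLL_\varrho/\Fil\LLL_\varrho\right)$ is not a perfect complex of $\RRR$-modules: finite cohomological dimension of $I_p$ is not the issue, finite generation is. The inertia group $I_p$ has infinite-dimensional $H^1$ with finite coefficients, so $H^1\left(I_p,\cdot\right)$ (equivalently the corresponding term with compact coefficients) is not finitely generated over $\RRR$, and the cokernel of $H^1\left(G_\Sigma,\cdot\right)\rightarrow H^1\left(I_p,\cdot\right)$ sitting in the $H^2$ of your fiber is not torsion --- it is not even finitely generated --- so the claim that your complex is perfect with torsion cohomology fails as stated. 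This is precisely why the paper does not use the Greenberg ($I_p$-level) condition inside the Euler-characteristic argument: it replaces it by the strict condition $\Loc_{\mathrm{str}}$, i.e.\ by $H^1\left(\Q_p,\D_\varrho/\Fil\D_\varrho\right)$, which has finitely generated dual, and then compares $\Sel^{\Sigma_0}$ with $\Sel^{\Sigma_0,\str}$ via the inflation--restriction term $H^1\left(\Gamma_p,H^0\left(I_p,\D_\varrho/\Fil\D_\varrho\right)\right)$ (equations (\ref{eq:div_h0}) and (\ref{ses:str_imp})). The divisor $\Div\left(H^0\left(\Q_p,\D_\varrho/\Fil\D_\varrho\right)^\vee\right)$ that this correction introduces is not negligible; it only disappears because it cancels against the kernel/cokernel terms of the map $\alpha_4$ in the paper's Tor-theoretic diagram (equation (\ref{eq:tordivisors})). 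Your proposal contains no analogue of this comparison step, and without it the ``bookkeeping'' cannot close.

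A secondary, more repairable issue: you build the complex from the compact lattice $\LLL_\varrho$ and promise to translate to the discrete Selmer group by Poitou--Tate duality. The paper avoids this entirely by working with $\D_\varrho^\vee$ and the identification $H^i\left(\cdot,\D_\varrho\right)^\vee\cong\Tor_i^{\RRR[[\cdot]]}\left(\RRR,\D_\varrho^\vee\right)$ over the completed group rings, so that no duality error terms (which would involve the Tate dual $\LLL^\star_\varrho$ and its $H^0$'s) enter. If you switch to the strict local condition at $p$, work with $\D_\varrho^\vee$ (or its homological avatar) rather than $\LLL_\varrho$, use conditions (2) and (3) to get the needed $\Tor_2$-vanishing and hence projective dimension $\le 1$ for the relevant augmentation-ideal tensor products, and then add the strict-versus-Greenberg comparison above, your argument becomes essentially the paper's proof.
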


Before  proving Proposition \ref{prop:prin_div}, it will be helpful to prove the following lemma. Let $\SSS$ denote the multiplicatively closed set $\RRR \setminus \{0\}$.  Let  $\mathbf{Ch}_{\SSS}^\flat\big(\mathbf{P}(\RRR)\big)$ denote the Waldhausen category of bounded chain complexes of finitely generated projective $\RRR$-modules whose cohomologies are $\SSS$-torsion.

\begin{lemma} \label{lem:K0eul_ch}
Suppose $\MMM$ and $\NNN$ are two finitely generated torsion $\RRR$-modules such that $[\MMM] -[\NNN]$ is the Euler characteristic of an element of $K_0\left(\mathbf{Ch}_{\SSS}^\flat\big(\mathbf{P}(\RRR)\big)\right)$. Then, the divisor $\Div(\MMM) - \Div(\NNN)$ in $Z^1(\RRR)$ is principal.
\end{lemma}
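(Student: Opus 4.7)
The plan is to invoke the localization long exact sequence in algebraic $K$-theory, identifying $K_0\left(\mathbf{Ch}_{\SSS}^\flat(\mathbf{P}(\RRR))\right)$ with the relative $K$-group $K_0(\RRR,\mathrm{Frac}(\RRR))$:
\begin{align*}
K_1(\RRR) \to K_1(\mathrm{Frac}(\RRR)) \xrightarrow{\partial} K_0\left(\mathbf{Ch}_{\SSS}^\flat(\mathbf{P}(\RRR))\right) \to K_0(\RRR) \to K_0(\mathrm{Frac}(\RRR)),
\end{align*}
together with the surjectivity of $\partial$ highlighted in the introduction, which applies in the intended setting, where $\RRR$ is a local ring.

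First I would assemble a divisor homomorphism $\Div: K_0\left(\mathbf{Ch}_{\SSS}^\flat(\mathbf{P}(\RRR))\right) \to Z^1(\RRR)$, defined by $[P^\bullet] \mapsto \sum_i (-1)^i \Div\bigl(H^i(P^\bullet)\bigr)$. The long exact sequence in cohomology attached to a short exact sequence of complexes makes the Euler characteristic additive in the Grothendieck group $G_0^{\mathrm{tors}}(\RRR)$ of finitely generated torsion $\RRR$-modules, and $\Div$ is additive on $G_0^{\mathrm{tors}}(\RRR)$, so the assignment descends to $K_0$. The hypothesis that $[\MMM] - [\NNN]$ equals the Euler characteristic of some $\alpha \in K_0\left(\mathbf{Ch}_{\SSS}^\flat(\mathbf{P}(\RRR))\right)$ then gives $\Div(\MMM) - \Div(\NNN) = \Div(\alpha)$ in $Z^1(\RRR)$, so the claim reduces to showing that $\Div(\alpha)$ is a principal divisor.

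Next I would lift $\alpha$ through $\partial$: using surjectivity, choose $u \in K_1(\mathrm{Frac}(\RRR))$ with $\partial(u) = \alpha$, and compute $\Div(\partial(u))$ explicitly. For $f \in \RRR\setminus\{0\}$, the connecting map sends $f$ to the class of the two-term complex $[\RRR \xrightarrow{\cdot f} \RRR]$ placed in degrees $0$ and $1$; its Euler characteristic in $G_0^{\mathrm{tors}}(\RRR)$ is $[\RRR/f\RRR]$, whose divisor is $\Div(f)$, visibly principal. Writing a general element of $\mathrm{Frac}(\RRR)^\times$ as $a/b$ and extending additively, or passing through the determinant map $K_1(\mathrm{Frac}(\RRR)) \to \mathrm{Frac}(\RRR)^\times/\RRR^\times$ (which captures all the information needed, since $\Div$ annihilates units), one deduces that $\Div(\partial(u))$ is principal for every $u$. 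The main technical obstacle is checking the compatibility between the Waldhausen description of $K_0\left(\mathbf{Ch}_{\SSS}^\flat(\mathbf{P}(\RRR))\right)$, the explicit formula for $\partial$ on $K_1(\mathrm{Frac}(\RRR))$, and the divisor map $\Div$; this is a matter of unwinding definitions rather than deep arithmetic input, and with it in hand the proof concludes.
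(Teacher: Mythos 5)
Your proposal is correct and follows essentially the same route as the paper: both rest on the localization sequence in $K$-theory, the surjectivity of $\partial$ coming from the isomorphism $K_0(\RRR)\xrightarrow{\cong}K_0(\mathrm{Frac}(\RRR))$ for the local ring $\RRR$, and the identification of $\Div(\MMM)-\Div(\NNN)$ with the principal divisor of an element $\theta\in \mathrm{Frac}(\RRR)^\times$ lifting the given class through $\partial$. The only difference is bookkeeping: the paper verifies this identification prime-by-prime, localizing at each height one prime $\p$ and comparing valuations and invariant factors over the DVR $\RRR_\p$, whereas you package the same computation into a divisor homomorphism on $K_0\left(\mathbf{Ch}_{\SSS}^\flat\big(\mathbf{P}(\RRR)\big)\right)$ combined with the explicit two-term-complex formula for $\partial$.
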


\begin{proof}
Let $Q_\RRR$ denote the fraction field of $\RRR$. Note that $Q_\RRR$  is obtained as the localization of $\RRR$ at the multiplicatively closed set $\SSS$. By Theorem 3.2 in Weibel's K-book \cite{MR3076731}, we have the following commutative diagram, where the top row of $K$-groups is exact:

\begin{align*}
\xymatrix{
K_1\left(\RRR\right) \ar[r]\ar[d]^{\det}_{\cong} & K_1\left(Q_\RRR \right) \ar[r]^{\partial \qquad} \ar[d]^{\det}_{\cong}  & K_0\left(\mathbf{Ch}_{\SSS}^\flat\big(\mathbf{P}(\RRR)\big)\right) \ar[r]& K_0\left(\RRR\right) \ar[r]\ar[d]^{\mathrm{Rank}}_{\cong}& K_0\left(Q_\RRR\right) \ar[d]^{\dim}_{\cong} \ar[r]& 0 \\
\RRR ^\times \ar[r]& Q_\RRR^\times & & \Z \ar[r]& \Z
}
\end{align*}

Note that $K_0\left(\RRR\right)$ is isomorphic to $\Z$ by sending the class of a finitely generated projective module $\RRR^n$ over $\RRR$  to its rank $n$ (every finitely generated projective module over a Noetherian local ring is free). See Lemma 2.2 in Weibel's K-book \cite{MR3076731}. Similarly, $K_0\left(Q_\RRR\right)$ is isomorphic to $\Z$ by sending the class of a finite-dimensional vector space $Q_\RRR^n$ over $Q_\RRR$  to its dimension $n$. The natural morphism $K_0\left(\RRR\right) \rightarrow K_0\left(Q_\RRR\right)$ involves sending the class of $\RRR^n$ to the class of $Q_\RRR^n$. Consequently, the map $K_0\left(\RRR\right) \rightarrow K_0\left(Q_\RRR\right)$ is an isomorphism. As a result, the connecting morphism $\partial$ is surjective. Let $\theta$ be an element of $Q_\RRR$ such that $\partial(\theta)$ equals $[\MMM] -[\NNN]$ in $K_0\left(\mathbf{Ch}_{\SSS}^\flat\big(\mathbf{P}(\RRR)\big)\right)$.

Let $\p$ denote a height one prime ideal in $\RRR$. Let $\SSS_\p$ denote the multiplicatively closed set $\RRR \setminus \p$. By an argument similar to the one given in the previous paragraph, we have the following commutative diagram whose rows are exact:
\begin{align*}
\xymatrix{
& K_1\left(\RRR_\p\right) \ar[r]\ar[d]^{\det}_{\cong} & K_1\left(Q_\RRR \right) \ar[r]^{\partial_\p \qquad} \ar[d]^{\det}_{\cong}  & K_0\left(\mathbf{Ch}_{\SSS_\p}^\flat\big(\mathbf{P}(\RRR_\p)\big)\right) \ar[d]^{\cong} \ar[r]& 0 \\
0 \ar[r]& \RRR_p^\times \ar[r]& Q_\RRR^\times \ar[r]^{\mathrm{val_\p}}& \Z \ar[r]& 0.
}
\end{align*}
Note that since $\p$ is a height one prime in an integrally closed domain, the ring $\RRR_p$ is a DVR. We have a natural isomorphism $\frac{Q_\RRR^\times}{\RRR_\p^\times} \cong \Z$ given by the $\p$-adic valuation (denoted $\mathrm{val}_\p$).  Consider free resolutions $0 \rightarrow  \RRR_\p^m \xrightarrow {A_M} \RRR_\p^m \rightarrow \MMM_\p$ and $0 \rightarrow  \RRR_\p^n \xrightarrow {A_N} \RRR_\p^n \rightarrow \NNN_\p$  of the finitely generated torsion $\RRR_\p$-modules $\MMM_\p$ and $\NNN_\p$ respectively. Set $\mathrm{val}_\p \left(\MMM_\p\right)$ and $\mathrm{val}_\p \left(\NNN_\p\right)$ to equal $\mathrm{val}_\p\left(\det(A_M)\right)$ and $\mathrm{val}_\p\left(\det(A_N)\right)$ respectively. The isomorphism $K_0\left(\mathbf{Ch}_{\SSS_\p}^\flat\big(\mathbf{P}(\RRR_\p)\big)\right) \cong \Z$ sends $[\MMM_\p]-[\NNN_\p]$ to $\mathrm{val}_\p \left(\MMM_\p\right) - \mathrm{val}_\p \left(\NNN_\p\right)$. See Corollary 3.1.1 in Weibel's K-book \cite{MR3076731}. Note that $\partial_\p(\theta)$ equals $[\MMM_\p]-[\NNN_\p]$ in $K_0\left(\mathbf{Ch}_{\SSS_\p}^\flat\big(\mathbf{P}(\RRR_\p)\big)\right)$. As a result, $\mathrm{val}_\p (\theta)$  equals $\mathrm{val}_\p \left(\MMM_\p\right) - \mathrm{val}_\p \left(\NNN_\p\right)$. So, we have the following equality of divisors in $Z^1\left(\RRR\right)$:
\begin{align*}
\Div\left(\theta\right) = \Div(\MMM) - \Div(\NNN).
\end{align*}
The lemma follows.
\end{proof}

\begin{proof}[Proof of Proposition \ref{prop:prin_div}]
Let $\Gamma_p$ denote the quotient $\frac{\Gal{\overline{\Q}_p}{\Q_p}}{I_p}$. Let $\Frob_p$ denote the Frobenius at $p$, which is a topological generator for $\Gamma_p$. Consider the following exact sequence of  $\RRR$-modules:
\begin{align*}
0 \rightarrow H^0\left(\Q_p,\frac{\D_\varrho}{\Fil \D_\varrho}\right) \rightarrow H^0\left(I_p,\frac{\D_\varrho}{\Fil \D_\varrho}\right) \xrightarrow {\Frob_p-1} H^0\left(I_p,\frac{\D_\varrho}{\Fil \D_\varrho}\right) \rightarrow H^1\left(\Gamma_p,H^0\left(I_p,\frac{\D_\varrho}{\Fil \D_\varrho}\right)\right) \rightarrow 0.
\end{align*}
Condition (\ref{cond:one}) stated in the proposition tells us that the Pontryagin dual of every term in the above exact sequence is torsion. So, we obtain the following equality of divisors in $Z^1\left(\RRR\right)$:

\begin{align} \label{eq:div_h0}
\Div\left(H^0\left(\Q_p,\frac{\D_\varrho}{\Fil \D_\varrho}\right)^\vee \right) = \Div\left(H^1\left(\Gamma_p,H^0\left(I_p,\frac{\D_\varrho}{\Fil \D_\varrho}\right)\right)^\vee\right).
\end{align}

Consider the following commutative diagram where the bottom row is exact:
\begin{align*}
\xymatrix{ & & H^1\left(\Q_\Sigma/\Q,\D_\varrho\right) \ar[d]^{\phi^{\Sigma_0}_\str} \ar[r]^{\cong}& H^1\left(\Q_\Sigma/\Q,\D_\varrho\right) \ar[d] \\ 0 \ar[r]& H^1\left(\Gamma_p,H^0\left(I_p,\frac{\D_\varrho}{\Fil \D_\varrho}\right)\right) \ar[r]& H^1\left(\Q_p,\frac{\D_{\varrho}}{\Fil \D_{\varrho}}\right)   \ar[r]& H^1\left(I_p,\frac{\D_{\varrho}}{\Fil \D_{\varrho}}\right)^{\Gamma_p} \ar[r]& 0
}
\end{align*}

Since $H^2\left(\Q_p,\Fil \D_\varrho\right)$ equals zero,  $\frac{H^1\left(\Q_p,\D_{\varrho}\right)}{\Loc^{\mathrm{str}}\left(\Q_p,\D_{\varrho}\right)}$ is isomorphic as an $\RRR$-module to $H^1\left(\Q_p,\frac{\D_{\varrho}}{\Fil D_{\varrho}}\right)$. Conditions (\ref{cond:one}) and (\ref{cond:two}) along with the fact that the $\RRR$-module $\Sel^{\Sigma_0}\left(\Q,D_{\varrho}\right)^\vee$ is torsion let us deduce that the Weak Leopoldt conjecture for $\varrho$ holds and that the Pontryagin dual of $\coker(\phi^{\Sigma_0}_\str)$ is torsion. See the illustration involving global and local Euler-Poincair\'e characteristics after Lemma 4.5 in \cite{palvannan2016algebraic}.  Proposition 3.2.1 in Greenberg's work \cite{greenberg2010surjectivity} along with Condition (\ref{cond:three}) now let us conclude that the global-to-local map $\phi^{\Sigma_0}_\str$ defining the non-primitive strict Selmer group is surjective. The Snake Lemma then lets us obtain the following exact sequence of $\RRR$-modules:
\begin{align} \label{ses:str_imp}
0 \rightarrow \Sel^{\Sigma_0,\str}\left(\Q,\D_{\varrho}\right) \rightarrow  \Sel^{\Sigma_0}\left(\Q,\D_{\varrho}\right) \rightarrow H^1\left(\Gamma_p,H^0\left(I_p,\frac{\D_\varrho}{\Fil \D_\varrho}\right)\right) \rightarrow  0.
\end{align}

Combining equations (\ref{eq:div_h0}) and (\ref{ses:str_imp}) lets us obtain the following equality of divisors in $Z^1\left(\RRR\right)$:
\begin{align}\label{eq:str-imp}
\Div\left(\Sel^{\Sigma_0}\left(\Q,\D_{\varrho}\right)^\vee\right) = \Div\left(\Sel^{\Sigma_0,\str}\left(\Q,\D_{\varrho}\right)^\vee\right) + \Div\left(H^0\left(\Q_p,\frac{\D_\varrho}{\Fil \D_\varrho}\right)^\vee \right).
\end{align}

Now consider the completed group rings $\RRR[[\Gal{\Q_\Sigma}{\Q}]]$ and $\RRR[[\Gal{\overline{\Q}_p}{\Q_p}]]$ (as in Chapter V, \S 2 in \cite{neukirch2008cohomology}) and the corresponding augmentation ideals denoted $\mathcal{I}_{\Q_p} $ and $\mathcal{I}_{\Q_\Sigma}$ respectively. We have the following exact sequences:
\begin{align} \label{eq:tensor}
0 \rightarrow \mathcal{I}_{\Q_p} \rightarrow \RRR[[\Gal{\overline{\Q}_p}{\Q_p}]] \rightarrow \RRR \rightarrow 0, \qquad 0 \rightarrow \mathcal{I}_{\Q_\Sigma} \rightarrow \RRR[[\Gal{\Q_\Sigma}{\Q}]] \rightarrow \RRR \rightarrow 0.
\end{align}

It is possible to describe the global and local Galois cohomology groups using $\Tor$ groups. See Proposition 5.2.7 and Corollary 5.2.9 in \cite{neukirch2008cohomology}.  For each $i \geq 0$, we have the following isomorphisms of $\RRR$-modules:
\begin{align} \label{eq:tor_coh}
\Tor^{\RRR[[\Gal{\overline{\Q}_p}{\Q_p}]]}_{i}\left(\RRR,\ \left(\frac{\D_\varrho}{\Fil \D_\varrho}\right)^\vee\right) \cong H^i\left(\Q_p,\frac{\D_\varrho}{\Fil \D_\varrho}\right)^\vee,  \ \Tor^{\RRR[[\Gal{{\Q}_\Sigma}{\Q}]]}_{i}\left(\RRR, \ \D_\varrho^\vee\right) \cong H^i\left(\Q_\Sigma/\Q,\D_\varrho\right)^\vee.
\end{align}

By considering the tensor products $(-) \otimes_{\RRR[[\Gal{\overline{\Q}_p}{\Q_p}]]} \RRR$  and $(-) \otimes_{\RRR[[\Gal{\Q_\Sigma}{\Q}]]} \RRR$ respectively over the short exact sequences in equation (\ref{eq:tensor}), we have the following natural commutative diagram whose rows are exact:
\begin{align*}
\xymatrix{
 0 \ar[r]& H^1\left( \Q_p,\frac{\D_\varrho}{\Fil \D_\varrho}\right)^\vee \ar[d]^{\alpha_1} \ar[r]& \mathcal{I}_{\Q_p} \otimes_{\RRR[[\Gal{\overline{\Q}_p}{\Q_p}]]} \left(\frac{\D_\varrho}{\Fil \D_\varrho}\right)^\vee \ar[d]^{\alpha_2} \ar[r]& \left(\frac{\D_\varrho}{\Fil \D_\varrho}\right)^\vee \ar[d]^{\alpha_3} \ar[r]& H^0\left( \Q_p,\frac{\D_\varrho}{\Fil \D_\varrho}\right)^\vee \ar[d]^{\alpha_4} \ar[r]& 0  \\
 0 \ar[r]& H^1\left( \Q_\Sigma/\Q,\D_\varrho\right)^\vee \ar[r]& \mathcal{I}_{\Q_\Sigma} \otimes_{\RRR[[\Gal{\Q_\Sigma}{\Q}]]} \D_\varrho^\vee \ar[r]& \D_\varrho^\vee \ar[r]& H^0\left( \Q_\Sigma/\Q,\D_\varrho\right)^\vee \ar[r]& 0.
}
\end{align*}

As we discussed earlier, the global-to-local map $\phi^{\Sigma_0}_\str$ defining the non-primitive strict Selmer group is surjective. By considering Pontryagin duals, one can conclude that the map $\alpha_1$ is injective. The natural surjection $\D_\varrho \twoheadrightarrow \frac{\D_\varrho}{\Fil \D_\varrho}$ provides us an injection $\left(\frac{\D_\varrho}{\Fil \D_\varrho} \right)^\vee \hookrightarrow \D_\varrho^\vee$. The five lemma then lets us conclude that the map $\alpha_2$ is also an injection. Studying the kernel-cokernel\footnote{Though the Snake Lemma is not directly applicable since the rows of the commutative diagram involve four terms, the fact that the maps $\alpha_1$, $\alpha_2$ and $\alpha_3$ are injective allows us to deduce equation (\ref{eq:longexfour}). One can split the above commutative diagram into two commutative diagrams with two rows each (with each row being a short exact sequence) and then apply the snake lemma to each commutative diagram.} exact sequence involving the above commutative diagram, we obtain the following exact sequence of $\RRR$-modules:

\begin{align} \label{eq:longexfour}
0 \rightarrow \ker(\alpha_4) \rightarrow \frac{\coker(\alpha_2)}{\Sel^{\Sigma_0,\str}\left(\Q,\D_{\varrho}\right)^\vee}  \rightarrow \coker(\alpha_3) \rightarrow \coker(\alpha_4) \rightarrow 0.
\end{align}

Condition (\ref{cond:three}) lets us conclude that both the domain and codomain of the map $\alpha_4$ are torsion $\RRR$-modules. This lets us obtain the following equality of divisors in $Z^1\left(\RRR\right)$:
\begin{align}\label{eq:tordivisors}
\Div\left(\ker(\alpha_4)\right) -  \Div\left(\coker(\alpha_4)\right) = \Div\left(H^0\left(\Q_p,\frac{\D_\varrho}{\Fil \D_\varrho}\right)^\vee \right) -  \Div\left(H^0\left( \Q_\Sigma/\Q,\D_\varrho\right)^\vee\right).
\end{align}

The $p$-cohomological dimensions of the Galois groups $\Gal{\overline{\Q}_p}{\Q_p}$ and $\Gal{\Q_\Sigma}{\Q}$ are both less than or equal to two. See Theorem 7.1.8 and Proposition 8.3.18 in \cite{neukirch2008cohomology}. By Corollary 5.2.13 in \cite{neukirch2008cohomology}, the projective dimensions (both as a left and as a right module) of $\RRR$ as an $\RRR[[\Gal{\overline{\Q}_p}{\Q_p}]]$-module and as an $\RRR[[\Gal{\Q_\Sigma}{\Q}]]$-module are both less than or equal to two. So, the projective dimensions (both as a left and as a right module) of $\mathcal{I}_{\Q_p}$ as an $\RRR[[\Gal{\overline{\Q}_p}{\Q_p}]]$-module and $\mathcal{I}_{\Q_\Sigma}$ as an $\RRR[[\Gal{\Q_\Sigma}{\Q}]]$-module are both less than or equal to one. Consider  projective resolutions of length one for the $\RRR[[\Gal{\overline{\Q}_p}{\Q_p}]]$-module $\mathcal{I}_{\Q_p}$ and the $\RRR[[\Gal{\Q_\Sigma}{\Q}]]$-module $\mathcal{I}_{\Q_\Sigma}$. By considering the tensor products $\otimes_{\RRR[[\Gal{\overline{\Q}_p}{\Q_p}]]} \left(\frac{\D_\varrho}{\Fil \D_\varrho}\right)^\vee$ and $ \otimes_{\RRR[[\Gal{\Q_\Sigma}{\Q}]]} \D_\varrho^\vee$ respectively, one can conclude that  $\mathcal{I}_{\Q_p} \otimes_{\RRR[[\Gal{\overline{\Q}_p}{\Q_p}]]} \left(\frac{\D_\varrho}{\Fil \D_\varrho}\right)^\vee$ and $\mathcal{I}_{\Q_\Sigma} \otimes_{\RRR[[\Gal{\Q_\Sigma}{\Q}]]} \D_\varrho^\vee$ have projective dimension less than or equal to one as $\RRR$-modules since by condition (\ref{cond:three}) and equations (\ref{eq:tensor}) and (\ref{eq:tor_coh}), we have
\begin{align*}
\Tor^{\RRR[[\Gal{\overline{\Q}_p}{\Q_p}]]}_{1}\left(\mathcal{I}_{\Q_p} ,\ \left(\frac{\D_\varrho}{\Fil \D_\varrho}\right)^\vee\right) & \cong \Tor^{\RRR[[\Gal{\overline{\Q}_p}{\Q_p}]]}_{2}\left(\RRR,\ \left(\frac{\D_\varrho}{\Fil \D_\varrho}\right)^\vee\right) \cong H^2\left(\Q_p,\frac{\D_\varrho}{\Fil \D_\varrho}\right)^\vee = 0,  \\ \Tor^{\RRR[[\Gal{{\Q}_S}{\Q}]]}_{1}\left(\mathcal{I}_{\Q_\Sigma} , \ \D_\varrho^\vee\right)  &\cong \Tor^{\RRR[[\Gal{{\Q}_S}{\Q}]]}_{2}\left(\RRR, \ \D_\varrho^\vee\right) \cong H^2\left(\Q_\Sigma/\Q,\D_\varrho\right)^\vee = 0.
\end{align*}
Since the map $\alpha_2$ is injective, we obtain that the projective dimension of $\coker(\alpha_2)$ as an $\RRR$-module is finite. Consider a bounded projective resolution $ \mathbf{P}_{\bullet} \rightarrow \coker(\alpha_2)\rightarrow 0$ for the $\RRR$-module $\coker(\alpha_2)$. Note that $\coker(\alpha_3)$ is isomorphic to the free $\RRR$-module $(\Fil \D_\varrho)^\vee$. Combining this observation along with the observations in this paragraph and equation (\ref{eq:longexfour}) lets us obtain  a bounded chain complex $\mathbf{P}_{\bullet} \rightarrow  \left(\Fil \D_\varrho\right)^\vee$ in $\mathbf{Ch}_{\SSS}^\flat\big(\mathbf{P}(\RRR)\big)$ whose Euler characteristic equals $\left[\Sel^{\Sigma_0,\str}\left(\Q,\D_{\varrho}\right)^\vee\right] + \left[\ker(\alpha_4)\right] - \left[\coker(\alpha_4)\right]$. By Lemma \ref{lem:K0eul_ch}, the following divisor in $Z^1\left(\RRR\right)$ is principal:
\begin{align*}
\Div\left(\Sel^{\Sigma_0,\str}\left(\Q,\D_{\varrho}\right)^\vee\right) + \Div\left(\ker(\alpha_4)\right) - \Div\left(\coker(\alpha_4)\right).
\end{align*}
Combining equations (\ref{eq:str-imp}) and (\ref{eq:tordivisors}), we have the following equality of divisors in $Z^1\left(\RRR\right)$:
\begin{align*}
& \Div\left(\Sel^{\Sigma_0,\str}\left(\Q,\D_{\varrho}\right)^\vee \right) + \Div\left(\ker(\alpha_4)\right)- \Div\left(\coker(\alpha_4)\right) \\
=\ &\Div\left(\Sel^{\Sigma_0}\left(\Q,\D_{\varrho}\right)^\vee\right)  - \Div\left(H^0\left(\Q_p,\frac{\D_\varrho}{\Fil \D_\varrho}\right)^\vee \right) +  \Div\left(H^0\left(\Q_p,\frac{\D_\varrho}{\Fil \D_\varrho}\right)^\vee \right) -  \Div\left(H^0\left( \Q_\Sigma/\Q,\D_\varrho\right)^\vee\right) \\= \ &\Div\left(\Sel^{\Sigma_0}\left(\Q,\D_{\varrho}\right)^\vee \right) - \Div\left(H^0\left( \Q_\Sigma/\Q,\D_\varrho\right)^\vee\right).
\end{align*}
Combining these observations completes the proof of the Proposition.
\end{proof}

\begin{lemma} \label{lem:tor_h0dual}
Suppose that the $\RRR$-module $H^0\left(\Q_\Sigma/\Q , \D^\star_{\varrho}\right)^\vee$ is torsion. Then, we have the following equality of divisors in $Z^1\left(\RRR\right)$:
\begin{align*}
\Div\left(H^0\left(\Q_\Sigma/\Q, \D^\star_{\varrho}\right)^\vee\right)  = \Div\left(H^1\left(\Q_\Sigma/\Q, \LLL^\star_{\varrho}\right)_{\RRR-\mathrm{tor}}\right).
\end{align*}
\end{lemma}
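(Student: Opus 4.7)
The plan is to prove the equality of divisors by establishing that $H^0(G_\Sigma,\D^\star_\varrho)^\vee$ and $H^1(G_\Sigma,\LLL^\star_\varrho)_{\RRR-\mathrm{tor}}$ are pseudo-isomorphic as finitely generated $\RRR$-modules; the equality in $Z^1(\RRR)$ then follows at once, because pseudo-isomorphic modules have equal divisors in $Z^1(\RRR)$. The bridge between the two sides will be an $\RRR$-linear $\Ext^1$ group.

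First, I would invoke Jannsen's spectral sequence (of the ``Iwasawa modules up to isomorphism'' type) relating the continuous Galois cohomology of the compact lattice to $\RRR$-linear $\Ext$ groups of the Pontryagin dual of discrete Galois cohomology:
\[
E_2^{i,j} \;=\; \Ext^i_\RRR\!\left(H^j(G_\Sigma,\D^\star_\varrho)^\vee,\, \RRR\right) \;\Longrightarrow\; H^{i+j}(G_\Sigma,\LLL^\star_\varrho).
\]
This is applicable because $\RRR$ is a complete Noetherian local ring with finite residue field, $\LLL^\star_\varrho$ is a finitely generated free $\RRR$-module with continuous $G_\Sigma$-action, and $G_\Sigma$ has finite $p$-cohomological dimension. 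The five-term exact sequence in low degrees reads
\[
0 \to \Ext^1_\RRR\!\left(H^0(G_\Sigma,\D^\star_\varrho)^\vee, \RRR\right) \to H^1(G_\Sigma,\LLL^\star_\varrho) \to \Hom_\RRR\!\left(H^1(G_\Sigma,\D^\star_\varrho)^\vee, \RRR\right).
\]
Because $\Hom_\RRR(-,\RRR)$ is always torsion-free (indeed reflexive), the torsion submodule of $H^1(G_\Sigma,\LLL^\star_\varrho)$ equals the image of the left-hand map. Combined with the injectivity from the five-term sequence and the hypothesis that $H^0(G_\Sigma,\D^\star_\varrho)^\vee$ is $\RRR$-torsion (which forces the relevant $\Ext^1$ to be torsion as well), this yields an isomorphism
\[
\Ext^1_\RRR\!\left(H^0(G_\Sigma,\D^\star_\varrho)^\vee,\, \RRR\right) \;\cong\; H^1(G_\Sigma,\LLL^\star_\varrho)_{\RRR-\mathrm{tor}}.
\]

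Second, I would invoke the standard Matlis-duality-type fact that for any finitely generated torsion module $M$ over an integrally closed Noetherian domain $\RRR$, one has $\Div(\Ext^1_\RRR(M,\RRR)) = \Div(M)$ in $Z^1(\RRR)$. This is verified by localizing at each height one prime $\p$ of $\RRR$, where $\RRR_\p$ is a DVR and lengths of finite-length modules are preserved by $\Ext^1_{\RRR_\p}(-,\RRR_\p)$. Applying this with $M = H^0(G_\Sigma,\D^\star_\varrho)^\vee$ and combining with the isomorphism from the first step delivers the desired equality of divisors.

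The main obstacle is verifying the applicability and precise shape of Jannsen's spectral sequence in the present generality---in particular, confirming that the $E_2$-edge identifications hold when $\RRR$ is only a finite integral extension of $\Z_p[[u_1,\dotsc,u_n]]$ (hence possibly non-regular), and that the torsion-freeness and reflexivity arguments in the five-term exact sequence go through without regularity hypotheses on $\RRR$. An alternative route, if the spectral-sequence bookkeeping proves unwieldy, is to argue directly via the long exact sequence attached to $0 \to \LLL^\star_\varrho \to \LLL^\star_\varrho \otimes_\RRR \mathrm{Frac}(\RRR) \to \LLL^\star_\varrho \otimes_\RRR (\mathrm{Frac}(\RRR)/\RRR) \to 0$ to identify $H^1(G_\Sigma,\LLL^\star_\varrho)_{\RRR-\mathrm{tor}}$ with the cotorsion quotient of the $G_\Sigma$-invariants of the rightmost term, then compare with $H^0(G_\Sigma,\D^\star_\varrho)$ on a prime-by-prime basis.
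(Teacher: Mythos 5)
Your proposal is correct, but it takes a genuinely different route from the paper's. The paper argues by hand: it first shows (via Hermite--Minkowski) that $\mathrm{Im}(\varrho^\star)$ is topologically finitely generated, encodes the invariants by the matrix $A=[\varrho^\star(\gamma_1)-I_d,\cdots,\varrho^\star(\gamma_r)-I_d]$, invokes Greenberg's Proposition 2.2.2 to identify $H^1\left(\Q_\Sigma/\Q,\LLL^\star_{\varrho}\right)_{\RRR-\mathrm{tor}}\otimes_\RRR\RRR_\p$ with $H^0\left(\Q_\Sigma/\Q,\LLL^\star_{\varrho}/\pi_\p^m\right)\otimes_\RRR\RRR_\p$ for $m\gg 0$ (using Greenberg's Proposition 3.10 to force $H^0\left(\Q_\Sigma/\Q,\LLL^\star_{\varrho}\right)=0$), and concludes from the elementary fact that $A$ and its transpose $A^T$ have the same invariant factors over the DVR $\RRR_\p$. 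You replace all of this by the low-degree edge sequence of Jannsen's spectral sequence, which identifies $H^1\left(\Q_\Sigma/\Q,\LLL^\star_{\varrho}\right)_{\RRR-\mathrm{tor}}$ with $\Ext^1_\RRR\left(H^0\left(\Q_\Sigma/\Q,\D^\star_{\varrho}\right)^\vee,\RRR\right)$ (using that this $\Ext^1$ is torsion and that $\Hom_\RRR(-,\RRR)$ is torsion-free), followed by the localization computation $\Div\left(\Ext^1_\RRR(M,\RRR)\right)=\Div(M)$ for torsion $M$. The obstacle you flag is not serious: the spectral sequence requires no regularity of $\RRR$, only that $\RRR$ is a commutative complete Noetherian local ring with finite residue field (a finite integral extension of $\Z_p[[u_1,\dotsc,u_n]]$ that is a domain is automatically local and complete), that $\LLL^\star_{\varrho}$ is finitely generated over $\RRR$, and that $G_\Sigma$ has finite $p$-cohomological dimension with finite cohomology of finite modules; equivalently, one can run your argument through the perfectness of the continuous cochain complex of $\LLL^\star_{\varrho}$ and biduality in the derived category. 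Your route buys a module-level isomorphism between the torsion submodule and the $\Ext^1$ of the dual of $H^0$, valid before localizing, and a shorter proof; the paper's route buys self-containedness, relying only on results of Greenberg already cited elsewhere, and its transpose/invariant-factor step is precisely the height-one shadow of your $\Ext^1$ identification.
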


\begin{proof}
Observe that the image of the Galois representation $\mathrm{Im}(\varrho^\star)$ inside $\Gl_d(\RRR)$ is topologically finitely generated. To see this, note that we have a short exact sequence $0 \rightarrow \mathfrak{P} \rightarrow \mathrm{Im}(\varrho^\star) \rightarrow \mathrm{Im}(\overline{\varrho^\star})\rightarrow 0$. Here, $\mathrm{Im}(\overline{\varrho^\star})$ denotes the image of the residual representation associated to $\varrho^\star$ and is a finite group. Here, $\mathfrak{P}$ is a pro-$p$ group which, by an application of the Hermite-Minkowski theorem, can be shown to be topologically finitely generated. Let $\varrho^\star(\gamma_1) ,\cdots, \varrho^\star(\gamma_r)$ denote a finite set of topological generators for $\mathrm{Im}(\varrho^\star)$. Let $A$ denote the $d \times dr$ matrix $[\varrho^\star(\gamma_1) -I_d, \cdots \varrho^\star(\gamma_r) - I_d]$. Here $I_d$ is the identity matrix.

Let $\p$ denote a height one prime ideal in $\RRR$. Let $\pi_\p$ denote a uniformizer for the DVR $\RRR_\p$. Let $\MMM$ denote $\left(\D^\star_{\varrho}\right)^\vee$ --- a free $\RRR$-module of rank $d$. The discrete $\RRR$-module $H^0\left(\Q_\Sigma/\Q, \D^\star_{\varrho}\right)$ is isomorphic to $\ker\left(\D^\star_{\varrho} \xrightarrow {A} (\D^\star_{\varrho})^{{r}}\right)$.  By considering Pontryagin duals and localizing at $\p$, we have the following isomorphism of $\RRR_\p$-modules:
\begin{align} \label{eq:disc_inv}
 H^0\left(\Q_\Sigma/\Q , \D^\star_{\varrho}\right)^\vee \otimes_\RRR \RRR_\p & \cong \coker\left(\MMM_\p^{{r}} \xrightarrow {A^T} \MMM_\p\right) \cong \bigoplus \limits_i \frac{\RRR_\p}{(\pi_\p^{n_i})}.
\end{align}

Here, the elements $\pi_\p^{n_i}$ denote the invariant factors of the matrix $A^T$ over the DVR $\RRR_\p$.

By the arguments given in by \cite[Proposition 2.2.2]{greenberg2010surjectivity}, we have the following isomorphism of $\RRR_\p$-modules for large enough $m$:
\begin{align} \label{eq:com_h0}
H^1\left(\Q_\Sigma/\Q, \LLL^\star_{\varrho}\right)_{\RRR-\mathrm{tor}} \otimes_{\RRR} \RRR_\p  \cong H^0\left(\Q_\Sigma/\Q , \frac{\LLL^\star_{\varrho}}{\pi_\p^m}\right) \otimes_{\RRR} \RRR_\p.
\end{align}
Let us choose such an $m$  ensuring also that it is larger than all the integers $m_i$ that appear in the exponent of the invariant factors $\pi_\p^{m_i}$ of the matrix $A$ over the DVR $\RRR_\p$. Note that the $\RRR$-module $H^0\left(\Q_\Sigma/\Q , \frac{\LLL^\star_{\varrho}}{\pi_\p^m}\right)$ is isomorphic to  $\ker\left(\dfrac{\LLL^\star_{\varrho}}{\pi_\p^m \LLL^\star_{\varrho}}  \xrightarrow {A} \left(\dfrac{\LLL^\star_{\varrho}}{\pi_\p^m \LLL^\star_{\varrho}}\right)^{{r}}\right)$. Since the $\RRR$-module $H^0\left(\Q_\Sigma/\Q , \D^\star_{\varrho}\right)^\vee$ is torsion, by \cite[Proposition 3.10]{MR2290593}, the $\RRR$-module $H^0\left(\Q_\Sigma/\Q , \LLL^\star_{\varrho}\right)^\vee$ equals zero.  Consider the short exact sequence of $\RRR$-modules: $0 \rightarrow \LLL^\star_{\varrho} \xrightarrow {A} (\LLL^\star_{\varrho})^{{r}}\rightarrow Y \rightarrow 0$. Consider the tensor product $\otimes_\RRR \frac{\RRR_\p}{(\pi_\p^m)}$, we get the following isomorphism of $\RRR_\p$-modules:
\begin{align} \label{eq:tor_inv}
\notag H^0\left(\Q_\Sigma/\Q , \frac{\LLL^\star_{\varrho}}{\pi_\p^m \LLL^\star_{\varrho}}\right) \otimes_\RRR \RRR_\p & \cong \ker\left(\dfrac{\LLL^\star_{\varrho}}{\pi_\p^m\LLL^\star_{\varrho}}  \xrightarrow {A} \left(\dfrac{\LLL^\star_{\varrho}}{\pi_\p^m \LLL^\star_{\varrho}}\right)^{{r}}\right) \otimes_\RRR \RRR_\p \cong \Tor_1^{\RRR_p}\left(Y_\p, \frac{\RRR_\p}{(\pi_\p^{m})}\right) \cong Y_\p [\pi_\p^m] \\  & \cong \bigoplus_i  \dfrac{\RRR_p}{(\pi^{m_i}_\p)}.
\end{align}

Since we are working over the DVR $\RRR_\p$, the invariant factors $\pi_\p^{m_i}$ of the matrix $A$ must coincide with the invariant factors $\pi_\p^{n_i}$ of its transpose $A^T$. Equations (\ref{eq:disc_inv}), (\ref{eq:com_h0}) and (\ref{eq:tor_inv}) along with these observations let us deduce the following isomorphism of $\RRR_\p$-modules:
\begin{align*}
 H^0\left(\Q_\Sigma/\Q , \D^\star_{\varrho}\right)^\vee \otimes_\RRR \RRR_\p \cong H^1\left(\Q_\Sigma/\Q, \LLL^\star_{\varrho}\right)_{\RRR-\mathrm{tor}} \otimes_{\RRR} \RRR_\p.
\end{align*}
This completes the proof of the lemma.
\end{proof}

\begin{lemma} \label{lem:monic_prin}
Let $\p$ be a height one prime ideal in the power series ring $\RRR[[x]]$ containing a monic polynomial $h(x)$. Then, $\p$ is principal.
\end{lemma}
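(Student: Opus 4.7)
The plan is to produce an explicit principal generator of $\p$ as the minimal polynomial, over $\RRR$, of the image of $x$ in $\RRR[[x]]/\p$. The workhorse will be Weierstrass division for monic polynomials, which gives a canonical isomorphism $\RRR[[x]]/(f) \cong \RRR[x]/(f)$ for any monic $f \in \RRR[x]$ and exhibits the quotient as a free $\RRR$-module of rank $\deg f$.

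First I would show that $\p \cap \RRR = (0)$. Applying Weierstrass division to $h$, the domain $S := \RRR[[x]]/\p$ becomes a finite $\RRR$-module. Because $\RRR$ is a finite integral extension of the regular local ring $\Z_p[[u_1,\dotsc,u_n]]$, the rings $\RRR$ and $\RRR[[x]]$ are (universally) catenary, so $\dim S = \dim \RRR[[x]] - \mathrm{ht}(\p) = \dim \RRR$. Since $S$ is integral over $\RRR/(\p \cap \RRR)$, the dimensions match only if $\p \cap \RRR = (0)$, yielding an injective finite integral extension of domains $\RRR \hookrightarrow S$.

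Next, let $\xi$ denote the image of $x$ in $S$, and let $m(T)$ be the minimal polynomial of $\xi$ over $\mathrm{Frac}(\RRR)$. Because $\xi$ is integral over the integrally closed domain $\RRR$, one has $m(T) \in \RRR[T]$, and $m$ is monic by definition. By construction $m(x) \in \p$. Gauss's lemma for the integrally closed domain $\RRR$, applied to the monic polynomial $m$, identifies the kernel of the evaluation map $\RRR[x] \twoheadrightarrow \RRR[\xi] \subseteq S$ with the ideal $(m(x))$, so $\RRR[x]/(m(x)) \cong \RRR[\xi]$ is a domain. A second application of Weierstrass division, now to the monic $m(x)$, gives $\RRR[[x]]/(m(x)) \cong \RRR[x]/(m(x))$, which is therefore also a domain; hence $(m(x))$ is prime in $\RRR[[x]]$, of height one by Krull's Hauptidealsatz.

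To finish, $(m(x)) \subseteq \p$ is a containment of height-one primes in the Noetherian domain $\RRR[[x]]$, and no such containment can be strict, so $\p = (m(x))$ is principal. The step I expect to require the most care is the catenary/dimension argument that forces $\p \cap \RRR = (0)$: $\RRR$ need not be regular or even Cohen--Macaulay in our setting, but universal catenarity is inherited from being a finite extension of the excellent ring $\Z_p[[u_1,\dotsc,u_n]]$, and is preserved when passing to the power series ring, so the dimension formula applies as needed.
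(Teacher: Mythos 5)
Your overall strategy (produce a monic generator of $\p$ via division by monic polynomials, here the minimal polynomial of the image of $x$) is close in spirit to the paper's, but the workhorse claim you lean on is false as stated, and it fails exactly at the crucial step. It is not true that $\RRR[[x]]/(f)\cong \RRR[x]/(f)$, free of rank $\deg f$, for an arbitrary monic $f\in\RRR[x]$: for instance $f=x-1$ has unit constant term, hence is a unit of $\RRR[[x]]$, so $\RRR[[x]]/(x-1)=0$ while $\RRR[x]/(x-1)\cong\RRR$; likewise $f=x(x-1)$ gives $\RRR[[x]]/(f)\cong\RRR[[x]]/(x)\cong\RRR$, of the wrong rank, and the natural map $\RRR[x]/(f)\rightarrow\RRR[[x]]/(f)$ is not injective. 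What is true for a general monic $f$ is only the existence of a division $g=qf+r$ with $\deg r<\deg f$ (no uniqueness, no freeness), because the reduction of $f$ modulo $\mathfrak{m}_{\RRR}$ is a nonzero series of some order $s\le\deg f$ and ordinary Weierstrass division applies. Existence is all you need for your first application (finiteness of $S=\RRR[[x]]/\p$ over $\RRR$, hence $\p\cap\RRR=(0)$), but your second application --- transferring primality of $(m(x))$ from $\RRR[x]$ to $\RRR[[x]]$ via the claimed isomorphism --- is precisely where the general statement breaks down, and that is the heart of your argument; as written this is a genuine gap.

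The gap is repairable, because in your situation $m$ is in fact forced to be a distinguished polynomial (all non-leading coefficients in $\mathfrak{m}_{\RRR}$), and for distinguished polynomials the isomorphism and the freeness do hold. Concretely: by Hensel's lemma (or Weierstrass preparation) factor $m=m_1m_2$ in $\RRR[x]$ with $m_1$ distinguished of degree $s$ and $m_2$ monic with unit constant term, hence a unit of $\RRR[[x]]$; since $m$ lies in the proper ideal $\p$ it is not a unit of $\RRR[[x]]$, so $s\ge 1$, and since $(m(x))\RRR[x]$ is prime (your step with the evaluation map), one deduces $m_2=1$, i.e. $m=m_1$ is distinguished; only then may you conclude $\RRR[[x]]/(m)\cong\RRR[x]/(m)$ and hence that $(m)\RRR[[x]]$ is prime. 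You must insert such an argument. Note that the paper's own proof sidesteps this issue entirely: it sets $\p_0=\p\cap\RRR[x]$, uses only the existence of division by $h$ to show $\p=\p_0\RRR[[x]]$, gets $\mathrm{ht}(\p_0)=1$ from flatness of $\RRR[x]\rightarrow\RRR[[x]]$ and going down, and then shows the least-degree monic element of $\p_0$ is irreducible and generates $\p_0$ via Gauss's lemma over the integrally closed ring $\RRR$; no isomorphism between the two quotients is ever invoked. Your catenarity argument for $\p\cap\RRR=(0)$ and the minimal-polynomial construction are correct but heavier than what that route requires.
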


\begin{proof}
Let $\p_0$ denote the prime ideal $\p \cap \RRR[x]$. Under the ring extension $\RRR[x] \rightarrow \RRR[[x]]$, the prime ideal $\p$ is the extension of $\p_0$ (that is, $\p = \p_0\RRR[[x]]$). To see this, we can use the Euclidean algorithm pertaining to the Weierstrass preparation theorem \cite[Theorem 2.1 in Chapter 5]{MR1029028}. For every element $\beta$ in $\p$, there exists a polynomial $g(x)$ in $\RRR[x]$ with degree less than $\deg(h(x))$, and an element $q$ in $\RRR[[x]]$, satisfying $g(x) = \beta - q \cdot h(x)$. To prove the lemma, it suffices to show that the prime ideal $\p_0$ of $\R[x]$ is principal.

The ring $\RRR[[x]]$ is the completion of $\RRR[x]$ with respect of the ideal $(x)$. The extension $\RRR[x] \rightarrow \RRR[[x]]$ is a flat extension of Noetherian rings (\cite[Proposition 10.14]{atiyah1969introduction}). So, by the going down theorem (\cite[Lemma 10.11]{eisenbud1995commutative}), the height of the prime ideal $\p_0$ is also $1$.

Let $h_{\min}(x)$ denote the monic polynomial with the least degree amongst all the monic polynomials in $\p_0$. We claim that $h_{min}(x)$ generates $\p_0$. An application of Gauss' lemma (\cite[Corollary 4.12]{eisenbud1995commutative}) tells us that to show this, since the height of $\p_0$ equals one, it suffices to show that the element $h_{min}(x)$ is irreducible in $\RRR[x]$. Suppose we have $h_{min}(x) = h_1(x)h_2(x)$. The leading terms of $h_1(x)$ and $h_2(x)$ must be units in $\RRR$ since their product equals $1$. So, without loss of generality, assume that the polynomials $h_1(x)$ and $h_2(x)$ are also monic. Since $\p_0$ is prime, we may also suppose that $h_1(x)$ belongs to $\p_0$. Since $h_{\min}(x)$ is chosen to have the least degree amongst all the monic polynomials in $\p_0$, we have $\deg(h_1(x)) = \deg(h_{\min}(x))$ and $\deg(h_2(x))=0$. As a result, the monic polynomial $h_2(x)$ equals $1$ and $h_1(x)$ equals $h_{\min}(x)$. This shows that $h_{min}(x)$ is an irreducible element in $\RRR[x]$ and hence generates $\p_0$. This completes the proof of the lemma.
\end{proof}

\subsection{Cyclotomic twist deformations} \label{sec:cyc_twist}
Let $\RRR[[\Gamma_\Cyc]]$ denote the Iwasawa algebra $\varprojlim \limits_n \RRR[\Gamma_\Cyc / \Gamma_\Cyc^{p^n}]$. We shall follow all the notations used at the start of Section \ref{sec:selmer_struc}. To a Galois representation $\varrho : G_\Sigma \rightarrow \Gl_d(\RRR)$ satisfying the conditions (\ref{eq:fil-rho}) and (\ref{p-critical}), we shall associate the Galois representation $\varrho \otimes \kappa^{-1}: G_\Sigma \rightarrow \Gl_d(\RRR[[\Gamma_\Cyc]])$ given by the action of $G_\Sigma$ on $L_\varrho \otimes_\RRR \RRR[[\Gamma_\Cyc]](\kappa^{-1})$.  The Galois representation $\varrho \otimes \kappa^{-1}$ is called the cyclotomic twist deformation\footnote{Our definition of cyclotomic twist deformations differs from Greenberg's in that we consider twists by $\kappa^{-1}$ whereas Greenberg considers twists by $\kappa$. This distinction is not serious since one can translate results for cyclotomic twist deformations in our context to Greenberg's simply by considering the $\RRR$-linear ring isomorphism $\RRR[[\Gamma_\Cyc]] \cong \RRR[[\Gamma_\Cyc]]$ obtained by the involution sending $\gamma \rightarrow \gamma^{-1}$ for every $\gamma$ in $\Gamma_\Cyc$.} of $\varrho$. It is straightforward to see that $\varrho \otimes \kappa^{-1}$ also satisfies the condition (\ref{p-critical}).  One can also naturally associate to $\varrho\otimes \kappa^{-1}$, the following $\Gal{\overline{\Q}_p}{\Q_p}$-equivariant short exact sequence of free $\RRR[[\Gamma_\Cyc]]$-modules:
\begin{align}\label{eq:filtration}
0\rightarrow \Fil\LLL_{\varrho \otimes \kappa^{-1}} \rightarrow \LLL_{\varrho \otimes \kappa^{-1}} \rightarrow \frac{\LLL_{\varrho \otimes \kappa^{-1}}}{\Fil\LLL_{\varrho \otimes \kappa^{-1}}} \rightarrow 0.
\end{align}

Here, $\Fil\LLL_{\varrho \otimes \kappa^{-1}}$ denotes the $\RRR[[\Gamma_\Cyc]]$-module $\Fil\LLL_{\varrho } \otimes_\RRR \RRR[[\Gamma_\Cyc]](\kappa^{-1})$. We shall use the cumbersome notation $\varrho\otimes \kappa^{-1}$ to emphasize that the Galois representation is a cyclotomic twist deformation. Note that the residual representations associated to $\varrho$ and $\varrho \otimes \kappa^{-1}$ are isomorphic. \\

We shall define a discrete primitive Selmer group $\Sel\left(\Q,D_{\varrho\otimes \kappa^{-1}}\right)$ and a discrete ``strict'' Selmer group $\Sel^{\str}\left(\Q,\D_{\varrho\otimes \kappa^{-1}}\right)$ as follows:
\begin{align*}
  \Sel\left(\Q,\D_{\varrho\otimes \kappa^{-1}}\right):=\ker\bigg(H^1\left(G_\Sigma, \D_{\varrho\otimes \kappa^{-1}}\right) & \xrightarrow {\phi}  \frac{H^1\left(\Q_p,\D_{\varrho}\right)}{\Loc\left(\Q_p,D_{\varrho\otimes \kappa^{-1}}\right)} \oplus \bigoplus_{l \in \Sigma_0} H^1\left(\Q_l,\D_{\varrho\otimes \kappa^{-1}}\right)  \bigg), \\
    \Sel^{\str}\left(\Q,\D_{\varrho\otimes \kappa^{-1}}\right):=\ker\bigg(H^1\left(G_\Sigma, \D_{\varrho\otimes \kappa^{-1}}\right) & \rightarrow   \frac{H^1\left(\Q_p,\D_{\varrho\otimes \kappa^{-1}}\right)}{\Loc_{\mathrm{str}}\left(\Q_p,\D_{\varrho\otimes \kappa^{-1}}\right)} \oplus \bigoplus_{l \in \Sigma_0} H^1\left(\Q_l,\D_{\varrho\otimes \kappa^{-1}}\right)  \bigg).
\end{align*}
Here, the local conditions at $p$ are given below:
\begin{align*}
\Loc(\Q_p,\D_{\varrho\otimes \kappa^{-1}})&:=\ker\left(H^1\left(\Q_p,\D_{\varrho\otimes \kappa^{-1}}\right) \rightarrow H^1\left(I_p,\frac{\D_{\varrho\otimes \kappa^{-1}}}{\Fil \D_{\varrho\otimes \kappa^{-1}}}\right) \right), \\ \Loc_{\mathrm{str}}(\Q_p,\D_{\varrho\otimes \kappa^{-1}})&:=\ker\left(H^1\left(\Q_p,\D_{\varrho\otimes \kappa^{-1}}\right) \rightarrow H^1\left(\Q_p,\frac{\D_{\varrho\otimes \kappa^{-1}}}{\Fil \D_{\varrho\otimes \kappa^{-1}}}\right) \right).
\end{align*}

Using  local Euler-Poincaire charateristics (Proposition 4.2 in \cite{MR2290593}) along with Proposition 4.1 in \cite{palvannan2016algebraic}, it is straightforward to deduce the following equalities of ranks:

\begin{align*}
\mathrm{Rank}_{\RRR[[\Gamma_\Cyc]]}\Loc(\Q_p,\D_{\varrho\otimes \kappa^{-1}})^\vee = \mathrm{Rank}_{\RRR[[\Gamma_\Cyc]]}\Loc_{\mathrm{str}}(\Q_p,\D_{\varrho\otimes \kappa^{-1}})^\vee = d^+.
\end{align*}

\begin{remark}
Greenberg's definition of (primitive and strict) Selmer groups requires the global cocycles to be unramified at primes $l \in \Sigma_0$, whereas we require global cocycles to be ``locally trivial'' at all primes $l \in \Sigma_0$.  Since in this section we are only concerned with Galois representations that are cyclotomic twist deformations, this distinction does not matter since the natural map $H^1\left(\Q_l,\D_{\varrho\otimes \kappa^{-1}}\right) \rightarrow H^1\left(I_l,\D_{\varrho\otimes \kappa^{-1}}\right)$ turns out to be injective for all $l \in \Sigma_0$. Here, $I_l$ denotes the inertia subgroup inside $\Gal{\overline{\Q}_l}{\Q_l}$.
\end{remark}

\begin{remark}
When the Galois representation $\varrho$ satisfies Greenberg's Panchishkin condition, the cyclotomic deformation $\varrho \otimes \kappa^{-1}$ also satisfies Greenberg's Panchishkin condition. See \cite[Section 3]{greenberg1994iwasawa}.
\end{remark}

\begin{proposition}\label{prop:str-prim}
Suppose that the trivial representation is not a Jordan-Holder component of the residual representation $\overline{\varrho}$ for the action of $\Gal{\overline{\Q}_p}{\Q_p}$. Then, the natural inclusion $\Loc_{\mathrm{str}}(\Q_p,\D_{\varrho\otimes \kappa^{-1}}) \stackrel{\cong}{\hookrightarrow} \Loc(\Q_p,\D_{\varrho\otimes \kappa^{-1}})$ of discrete $\RRR[[\Gamma_\Cyc]]$-modules is an equality. Consequently, the natural inclusion $\Sel^{\str}\left(\Q,\D_{\varrho\otimes \kappa^{-1}}\right) \stackrel{\cong}{\hookrightarrow} \Sel\left(\Q,\D_{\varrho\otimes \kappa^{-1}}\right)$ of discrete $\RRR[[\Gamma_\Cyc]]$-modules is also an equality.
\end{proposition}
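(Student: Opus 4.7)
The plan is to reduce the equality of local conditions at $p$ to a $\Gamma_p$-cohomology vanishing via inflation-restriction, and then bootstrap from a residual calculation using Nakayama's lemma. Set $G_p := \Gal{\overline{\Q}_p}{\Q_p}$ and $\Gamma_p := G_p/I_p$, and abbreviate $\D := \D_{\varrho \otimes \kappa^{-1}}$, $\Fil \D := \Fil \D_{\varrho \otimes \kappa^{-1}}$. Since $\Gamma_p$ is topologically cyclic and has $p$-cohomological dimension one, the inflation-restriction sequence for $I_p \triangleleft G_p$ yields the short exact sequence
\[
0 \to H^1\bigl(\Gamma_p, H^0(I_p, \D/\Fil\D)\bigr) \to H^1\bigl(\Q_p, \D/\Fil\D\bigr) \to H^1\bigl(I_p, \D/\Fil\D\bigr)^{\Gamma_p} \to 0.
\]
Combined with the definitions of $\Loc$ and $\Loc_{\mathrm{str}}$, this embeds the quotient $\Loc/\Loc_{\mathrm{str}}$ into $H^1\bigl(\Gamma_p, H^0(I_p, \D/\Fil\D)\bigr)$, so it is enough to show that this $H^1$ group vanishes; equivalently, that $\Frob_p - 1$ is surjective on $H^0(I_p, \D/\Fil\D)$.

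Pontryagin duality recasts this as the injectivity of $\Frob_p - 1$ on the finitely generated $\RRR[[\Gamma_\Cyc]]$-module $M := H^0(I_p, \D/\Fil\D)^\vee$. Since a surjective endomorphism of a Noetherian module is automatically injective, it is enough to show that $\Frob_p - 1$ is surjective on $M$; by Nakayama's lemma applied to the local ring $\RRR[[\Gamma_\Cyc]]$, this reduces to surjectivity on $M/\mathfrak{m}M$, where $\mathfrak{m}$ denotes the maximal ideal. The key simplification is that $\kappa$ takes values in $1+\mathfrak{m}$ (as $\Gamma_\Cyc$ is a pro-$p$ subgroup of $1+\mathfrak{m}$), so $\kappa \equiv 1 \pmod{\mathfrak{m}}$. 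Letting $\overline{N}$ denote the reduction modulo $\mathfrak{m}$ of $\LLL_{\varrho \otimes \kappa^{-1}}/\Fil\LLL_{\varrho \otimes \kappa^{-1}}$, which is naturally a $G_p$-subquotient of the residual representation $\overline{\varrho}$, the cofreeness of $\D/\Fil\D$ over $\RRR[[\Gamma_\Cyc]]$ combined with the triviality of $\kappa$ modulo $\mathfrak{m}$ yields a $G_p$-equivariant isomorphism $(\D/\Fil\D)[\mathfrak{m}] \cong \overline{N}$. Standard Pontryagin duality for finite $\RRR[[\Gamma_\Cyc]]$-modules then identifies $M/\mathfrak{m}M$ with the $k$-linear dual of $\overline{N}^{I_p}$, so the question reduces to showing that $\Frob_p - 1$ acts bijectively on the finite-dimensional $k$-vector space $\overline{N}^{I_p}$.

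Since $I_p$ is normal in $G_p$, the element $\Frob_p$ acts on $\overline{N}^{I_p}$, and the kernel of $\Frob_p - 1$ on this space is precisely $\overline{N}^{G_p}$. A non-zero element of $\overline{N}^{G_p}$ would exhibit a trivial $G_p$-subrepresentation of $\overline{N}$, and since $\overline{N}$ is a $G_p$-subquotient of $\overline{\varrho}$, this would yield a trivial Jordan-Holder constituent of $\overline{\varrho}$ for the action of $G_p$, contradicting the hypothesis. Hence $\Frob_p - 1$ is injective — and by finite-dimensionality bijective — on $\overline{N}^{I_p}$. Unwinding the chain of reductions gives $\Loc_{\mathrm{str}} = \Loc$, and the equality $\Sel^{\mathrm{str}} = \Sel$ is then immediate because the defining local conditions at primes $l \in \Sigma_0$ coincide in the two definitions. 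The principal subtlety is that $\kappa^{-1}|_{I_p}$ surjects onto $\Gamma_\Cyc$ and is therefore highly non-trivial, which makes a direct computation of $I_p$-invariants of $\D/\Fil\D$ awkward; the Nakayama reduction cleanly trades this for a residual calculation in which the twist has vanished.
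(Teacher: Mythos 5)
Your proof is correct and follows essentially the same route as the paper: both arguments reduce the equality $\Loc_{\mathrm{str}}=\Loc$ via inflation--restriction to the vanishing of $H^1\left(\Gamma_p, H^0\left(I_p,\frac{\D_{\varrho\otimes\kappa^{-1}}}{\Fil \D_{\varrho\otimes\kappa^{-1}}}\right)\right)$, and both conclude by combining the Jordan--H\"older hypothesis with the Nakayama-type fact that a surjective endomorphism of a finitely generated module over a Noetherian ring is injective. The only cosmetic difference is that the paper feeds in the hypothesis through Greenberg's Corollary 3.1.1 (giving $H^0\left(\Q_p,\frac{\D_{\varrho\otimes\kappa^{-1}}}{\Fil \D_{\varrho\otimes\kappa^{-1}}}\right)=0$ directly, hence injectivity of $\Frob_p-1$ on the discrete module), whereas you reprove that residual input by hand via reduction modulo the maximal ideal, using $\kappa\equiv 1$ to identify the residual quotient with a subquotient of $\overline{\varrho}$.
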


\begin{proof}
To prove the proposition, it suffices to show that the natural restriction map $$H^1\left(\Q_p,\frac{\D_{\varrho\otimes \kappa^{-1}}}{\Fil \D_{\varrho\otimes \kappa^{-1}}}\right) \xrightarrow {\Res} H^1\left(I_p,\frac{\D_{\varrho\otimes \kappa^{-1}}}{\Fil \D_{\varrho\otimes \kappa^{-1}}}\right)$$ is injective. By the inflation-restriction exact sequence, the kernel of this restriction map is naturally isomorphic to $H^1\left(\Gamma_p,H^0\left(I_p,\frac{\D_{\varrho\otimes \kappa^{-1}}}{\Fil \D_{\varrho\otimes \kappa^{-1}}}\right)\right)$. Here, $\Gamma_p$ denotes the quotient $\frac{\Gal{\overline{\Q}_p}{\Q_p}}{I_p}$. Note that $\Gamma_p$ is topologically generated by $\Frob_p$, the arithmetic Frobenius at $p$. \\

The hypothesis of the proposition and Corollary 3.1.1 in \cite{MR2290593}  let us deduce the following equality of discrete $\RRR[[\Gamma_\Cyc]]$-modules
\begin{align*}
0 = H^0\left(\Gal{\overline{\Q}_p}{\Q_p}, \frac{\D_{\varrho\otimes \kappa^{-1}}}{\Fil \D_{\varrho\otimes \kappa^{-1}}} \right) & \cong H^0\left(\Gamma_p, H^0\left(I_p, \frac{\D_{\varrho\otimes \kappa^{-1}}}{\Fil \D_{\varrho\otimes \kappa^{-1}}} \right)\right) \\ &\cong  H^0\left(I_p, \frac{\D_{\varrho\otimes \kappa^{-1}}}{\Fil \D_{\varrho\otimes \kappa^{-1}}} \right)[\Frob_p-1].
\end{align*}
So, we have the following exact sequence of finitely generated $\RRR[[\Gamma_\Cyc]]$-modules:
\begin{align*}
0 \rightarrow H^1\left(\Gamma_p, H^0\left(I_p, \frac{\D_{\varrho\otimes \kappa^{-1}}}{\Fil \D_{\varrho\otimes \kappa^{-1}}} \right)\right)^\vee  \rightarrow H^0\left(I_p, \frac{\D_{\varrho\otimes \kappa^{-1}}}{\Fil \D_{\varrho\otimes \kappa^{-1}}} \right)^\vee \xrightarrow {\Frob_p-1} H^0\left(I_p, \frac{\D_{\varrho\otimes \kappa^{-1}}}{\Fil \D_{\varrho\otimes \kappa^{-1}}} \right)^\vee \rightarrow 0.
\end{align*}
A straightforward application of Nakayama's Lemma tells us that a surjective endomorphism of a finitely generated module over a Noetherian ring is in fact an isomorphism (see Proposition 1.2 in \cite{MR0238839}). This lets us conclude that $H^1\left(\Gamma_p, H^0\left(I_p, \frac{\D_{\varrho\otimes \kappa^{-1}}}{\Fil \D_{\varrho\otimes \kappa^{-1}}} \right)\right)$ equals zero. The proposition follows.
\end{proof}

\subsection{Proof of Theorem \ref{theorem:second}}

\begin{theorem}
Suppose that the $\RRR$-module $\Sel(\Q,\D_{\varrho\otimes\kappa^{-1}})^\vee$ is torsion.  Then, the following divisor in $Z^1\left(\RRR[[\Gamma_\Cyc]]\right)$ is principal:
\begin{align*}
\Div\left(\Sel(\Q,\D_{\varrho\otimes\kappa^{-1}})^\vee\right) - \Div\left(H^0\left(\Q_\Sigma/\Q , \D_{\varrho\otimes\kappa^{-1}}\right)^\vee\right) - \Div\left(H^0\left(\Q_\Sigma/\Q , \D^\star_{\varrho\otimes\kappa^{-1}}\right)^\vee\right).
\end{align*}
\end{theorem}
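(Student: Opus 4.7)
The plan is to reduce the claim to Proposition \ref{prop:prin_div} applied to the cyclotomic twist deformation $\varrho \otimes \kappa^{-1}$, regarded as a Galois representation valued in $\Gl_d(\RRR[[\Gamma_\Cyc]])$, and then to bridge the gap between the non-primitive Selmer group $\Sel^{\Sigma_0}(\Q, \D_{\varrho\otimes\kappa^{-1}})$ of that proposition and the primitive Selmer group $\Sel(\Q, \D_{\varrho\otimes\kappa^{-1}})$ appearing in the theorem, producing along the way the Tate-dual contribution $\Div\left(H^0(G_\Sigma, \D^\star_{\varrho\otimes\kappa^{-1}})^\vee\right)$ via global duality.

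First I would verify the three hypotheses of Proposition \ref{prop:prin_div} for $\varrho \otimes \kappa^{-1}$ over the larger ring $\RRR[[\Gamma_\Cyc]]$. The presence of the cyclotomic variable is precisely what makes the relevant $H^0$'s and $H^2$'s behave well: since $\kappa^{-1}$ deforms any Galois-invariant element non-trivially, the modules $H^0(G_\Sigma, \D_{\varrho\otimes\kappa^{-1}})^\vee$, $H^0\left(I_p, \D_{\varrho\otimes\kappa^{-1}}/\Fil \D_{\varrho\otimes\kappa^{-1}}\right)^\vee$, and $H^0(\Q_l, \D_{\varrho\otimes\kappa^{-1}})^\vee$ for any $l \in \Sigma_0$ are all torsion over $\RRR[[\Gamma_\Cyc]]$; the required vanishing of $H^2(G_\Sigma, \D_{\varrho\otimes\kappa^{-1}})$ and of the two local $H^2$'s at $p$ is the weak Leopoldt assertion for cyclotomic towers, which follows from the hypothesized torsionness of $\Sel(\Q, \D_{\varrho\otimes\kappa^{-1}})^\vee$ combined with local Euler--Poincar\'e formulas and the global duality machinery of \cite{greenberg2010surjectivity}. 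Proposition \ref{prop:prin_div} then supplies the principal divisor
\[ \Div\left(\Sel^{\Sigma_0}(\Q, \D_{\varrho\otimes\kappa^{-1}})^\vee\right) - \Div\left(H^0(G_\Sigma, \D_{\varrho\otimes\kappa^{-1}})^\vee\right). \]

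Next I would pass from $\Sel^{\Sigma_0}$ down to the primitive $\Sel$ via the tautological left exact sequence
\[ 0 \to \Sel(\Q, \D_{\varrho\otimes\kappa^{-1}}) \to \Sel^{\Sigma_0}(\Q, \D_{\varrho\otimes\kappa^{-1}}) \to \bigoplus_{l \in \Sigma_0} H^1(\Q_l, \D_{\varrho\otimes\kappa^{-1}}), \]
whose cokernel will, by Poitou--Tate global duality applied to the pair $(\varrho \otimes \kappa^{-1}, \varrho^\star \otimes \kappa^{-1})$ together with Tate local duality at each $l \in \Sigma_0$, be controlled by the torsion part of $H^1(G_\Sigma, \LLL^\star_{\varrho\otimes\kappa^{-1}})$; Lemma \ref{lem:tor_h0dual} then identifies this torsion divisor with $\Div\left(H^0(G_\Sigma, \D^\star_{\varrho\otimes\kappa^{-1}})^\vee\right)$. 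For each $l \in \Sigma_0$ the prime $l \neq p$ is unramified in the cyclotomic $\Z_p$-tower, so $H^1(\Q_l, \D_{\varrho\otimes\kappa^{-1}})^\vee$ is annihilated by an Euler factor that is monic in a topological generator of $\Gamma_\Cyc$, and hence by Lemma \ref{lem:monic_prin} its characteristic divisor is principal in $\RRR[[\Gamma_\Cyc]]$. Combining the principal divisor from Proposition \ref{prop:prin_div}, the principal local contributions at primes $l \in \Sigma_0$, and the Poitou--Tate identification will assemble into the claimed principal divisor.

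The main obstacle I anticipate is the Poitou--Tate step that extracts $\Div\left(H^0(G_\Sigma, \D^\star_{\varrho\otimes\kappa^{-1}})^\vee\right)$ from the cokernel of the localization map: this requires careful bookkeeping of the Selmer complexes at primes in $\Sigma_0$, of the Tate local duality pairing between $H^1(\Q_l, \D_{\varrho\otimes\kappa^{-1}})$ and $H^1(\Q_l, \LLL^\star_{\varrho\otimes\kappa^{-1}})$, and of the passage from the $\LLL^\star$-side to the $\D^\star$-side through Lemma \ref{lem:tor_h0dual}. Once this identification is in place, the remainder is a bookkeeping of divisors in $Z^1(\RRR[[\Gamma_\Cyc]])$, using that principality is additive across short exact sequences of torsion modules with finite projective dimension.
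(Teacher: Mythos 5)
Your overall architecture (apply Proposition \ref{prop:prin_div} to $\varrho\otimes\kappa^{-1}$, then compare $\Sel^{\Sigma_0}$ with $\Sel$ via the exact sequence \eqref{es:imp} and handle the local terms at $l\in\Sigma_0$) is the same as the paper's, but your treatment of the cokernel term has a genuine gap. Poitou--Tate duality only yields an \emph{inclusion} $\coker(\phi)^\vee\subset H^1\left(\Q_\Sigma/\Q,\LLL^\star_{\varrho\otimes\kappa^{-1}}\right)_{\RRR[[\Gamma_\Cyc]]-\mathrm{tor}}$ (this is \eqref{eq:surj_coker}), so together with Lemma \ref{lem:tor_h0dual} you only get an inequality $\Div\left(\coker(\phi)^\vee\right)\le\Div\left(H^0\left(\Q_\Sigma/\Q,\D^\star_{\varrho\otimes\kappa^{-1}}\right)^\vee\right)$, not the equality your final assembly needs. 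What your steps actually produce is that $\Div\left(\Sel(\Q,\D_{\varrho\otimes\kappa^{-1}})^\vee\right)-\Div\left(H^0\left(\Q_\Sigma/\Q,\D_{\varrho\otimes\kappa^{-1}}\right)^\vee\right)-\Div\left(\coker(\phi)^\vee\right)$ is principal (the paper's \eqref{eq:tempprin}); substituting $\Div\left(H^0\left(\Q_\Sigma/\Q,\D^\star_{\varrho\otimes\kappa^{-1}}\right)^\vee\right)$ for $\Div\left(\coker(\phi)^\vee\right)$ is unjustified, since the dual compact Selmer group realizing $\coker(\phi)^\vee$ can be a proper submodule of the torsion part with strictly smaller divisor.

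The missing ingredient, and the way the paper closes exactly this gap, is the Cayley--Hamilton/monic-polynomial argument: after identifying $\RRR[[\Gamma_\Cyc]]\cong\RRR[[x]]$, the modules $H^0\left(\Q_\Sigma/\Q,\D_{\varrho\otimes\kappa^{-1}}\right)^\vee$ and $H^0\left(\Q_\Sigma/\Q,\D^\star_{\varrho\otimes\kappa^{-1}}\right)^\vee$ are annihilated by monic polynomials in $x$, so by Lemma \ref{lem:monic_prin} every height one prime in their supports is principal and hence both divisors are principal; the inclusion \eqref{eq:surj_coker} plus Lemma \ref{lem:tor_h0dual} transfers this to $\coker(\phi)^\vee$, so $\Div\left(\coker(\phi)^\vee\right)$ is principal as well. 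Feeding this into \eqref{eq:tempprin} shows $\Div\left(\Sel(\Q,\D_{\varrho\otimes\kappa^{-1}})^\vee\right)$ is itself principal, and the theorem follows because all three divisors in the statement are individually principal --- no equality between $\coker(\phi)^\vee$ and $H^0\left(\Q_\Sigma/\Q,\D^\star_{\varrho\otimes\kappa^{-1}}\right)^\vee$ is ever needed. You invoke Lemma \ref{lem:monic_prin} only for the local modules $H^1\left(\Q_l,\D_{\varrho\otimes\kappa^{-1}}\right)^\vee$, $l\in\Sigma_0$ (which is fine; the paper cites Proposition 8.6 of \cite{palvannan2016algebraic} there), but never for the two global $H^0$ terms, which is where the argument actually hinges. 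You should also record explicitly that $\Sel^{\Sigma_0}(\Q,\D_{\varrho\otimes\kappa^{-1}})^\vee$ is torsion (from the torsionness of the primitive Selmer dual and of the local $H^1$ duals at $\Sigma_0$), since that is the standing hypothesis of Proposition \ref{prop:prin_div}.
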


\begin{proof}
A standard argument involving the Snake Lemma allows us to obtain the following exact sequence of $\RRR[[\Gamma_\Cyc]]$-modules relating the primitive Selmer group to the non-primitive Selmer group:
\begin{align} \label{es:imp}
0 \rightarrow \Sel(\Q,\D_{\varrho\otimes\kappa^{-1}}) \rightarrow \Sel^{\Sigma_0}(\Q,\D_{\varrho\otimes\kappa^{-1}}) \rightarrow \prod_{l \in \Sigma_0} H^1\left(\Q_l, \D_{\varrho\otimes\kappa^{-1}} \right) \rightarrow \coker(\phi) \rightarrow 0.
\end{align}
See Corollary 3.2.5 in Greenberg's work \cite{greenberg2010surjectivity}. Proposition 4.2 and Corollary 4.3 in \cite{palvannan2016algebraic} together let us deduce that for each $l \in \Sigma_0$, the $\RRR[[\Gamma_\Cyc]]$-module $H^1\left(\Q_l, \D_{\varrho\otimes\kappa^{-1}} \right)^\vee$ is torsion. These observations let us conclude that if the $\RRR[[\Gamma_\Cyc]]$-module $\Sel(\Q,\D_{\varrho\otimes\kappa^{-1}})^\vee$ is torsion, then the $\RRR[[\Gamma_\Cyc]]$-module $\Sel^{\Sigma_0}(\Q,\D_{\varrho\otimes\kappa^{-1}})^\vee$ is also torsion.
\begin{enumerate}
\item By Proposition 4.1 in \cite{palvannan2016algebraic}, the $\RRR$-modules $H^0\left(\Q_\Sigma/\Q, \D_{\varrho\otimes\kappa^{-1}}\right)^\vee$ and $H^0\left(I_p,\frac{\D_{\varrho\otimes\kappa^{-1}}}{\Fil \D_{\varrho\otimes\kappa^{-1}}}\right)^\vee$ are torsion.
\item By Proposition 4.6 in \cite{palvannan2016algebraic}, we have $H^2\left(\Q_\Sigma/\Q,\D_{\varrho\otimes\kappa^{-1}}\right)$ equals zero. Combining local duality and Proposition 4.1 in \cite{palvannan2016algebraic}, we have $H^2\left( \Q_p,\Fil \D_{\varrho\otimes\kappa^{-1}}\right) = H^2\left( \Q_p,\frac{\D_{\varrho\otimes\kappa^{-1}}}{\Fil \D_{\varrho\otimes\kappa^{-1}}}\right)=0$.
\item Similarly, combining local duality and Proposition 4.1 in \cite{palvannan2016algebraic} the third hypothesis of Proposition \ref{prop:prin_div} is also satisfied, that is $H^2\left(\Q_{l_0},\D_{\varrho\otimes\kappa^{-1}}\right)=0$. Note that $l_0$ is a finite prime in $\Sigma_0$.
\end{enumerate}
These observations verify all the conditions of Proposition \ref{prop:prin_div}. Applying Proposition \ref{prop:prin_div} lets us deduce that the following divisor in $Z^1\left(\RRR[[\Gamma_\Cyc]]\right)$ is principal:
\begin{align*}
\Div\left(\Sel^{\Sigma_0}(\Q,\D_{\varrho\otimes\kappa^{-1}})^\vee\right)  - \Div\left(H^0\left(\Q_\Sigma/\Q, \D_{\varrho\otimes\kappa^{-1}}\right)^\vee\right).
\end{align*}

One can use Proposition 8.6 in \cite{palvannan2016algebraic} to deduce that  for each $l \in \Sigma_0$, the divisor of the $\RRR[[\Gamma_\Cyc]]$-module $H^1\left(\Q_l, \D_{\varrho\otimes\kappa^{-1}} \right)^\vee$ is principal. As a result, the following divisor in $Z^1\left(\RRR[[\Gamma_\Cyc]]\right)$ is principal:
\begin{align} \label{eq:tempprin}
\Div\left(\Sel(\Q,\D_{\varrho\otimes\kappa^{-1}})^\vee\right) - \Div\left(H^0\left(\Q_\Sigma/\Q , \D_{\varrho\otimes\kappa^{-1}}\right)^\vee\right) - \Div\left(\coker(\phi)^\vee\right).
\end{align}

Let us fix an identification  of $\RRR[[\Gamma_\Cyc]]$ (as a complete local Noetherian ring) with the power series ring $\RRR[[x]]$ by sending a topological generator $\gamma_0$ of $\Gamma_\Cyc$ to $x+1$. By \cite[Proposition 4.1]{palvannan2016algebraic} and an application of the Cayley-Hamilton theorem \cite[Theorem 4.3]{eisenbud1995commutative}, the $\RRR[[\Gamma_\Cyc]]$-modules $H^0\left(\Q_\Sigma/\Q , \D_{\varrho\otimes\kappa^{-1}}\right)^\vee$ and $H^0\left(\Q_\Sigma/\Q , \D^\star_{\varrho\otimes\kappa^{-1}}\right)^\vee$ are annihilated by a monic polynomial. Applying Lemma \ref{lem:monic_prin} then lets us conclude the divisors $\Div\left(H^0\left(\Q_\Sigma/\Q , \D_{\varrho\otimes\kappa^{-1}}\right)^\vee\right)$ and $\Div\left(H^0\left(\Q_\Sigma/\Q , \D^\star_{\varrho\otimes\kappa^{-1}}\right)^\vee\right)$ are principal (since every height one prime ideal in the support of these divisors are principal). Using Poitou-Tate duality (for e.g. see the arguments in \cite[Section 3]{greenberg2010surjectivity}), we have the following natural inclusion of $\RRR[[\Gamma_\Cyc]]$-modules:
\begin{align} \label{eq:surj_coker}
\coker(\phi)^\vee \subset H^1\left(\Q_\Sigma/\Q, \LLL^\star_{\varrho\otimes\kappa^{-1}}\right)_{\RRR[[\Gamma_\Cyc]]-\mathrm{tor}}.
\end{align}
The facts that the $\RRR[[\Gamma_\Cyc]]$-module $\Sel(\Q,\D_{\varrho\otimes\kappa^{-1}})^\vee$ is torsion and that the Weak Leopoldt conjecture is valid play a key role. Combining our observations in this paragraph along with equation (\ref{eq:surj_coker}) and Lemma \ref{lem:tor_h0dual} then lets us conclude that $\Div\left(\coker(\phi)^\vee\right)$ is also principal. Using equation (\ref{eq:tempprin}), we can now conclude that the divisors $\Div\left(\Sel(\Q,\D_{\varrho\otimes\kappa^{-1}})^\vee\right)$, $\Div\left(H^0\left(\Q_\Sigma/\Q, \D_{\varrho\otimes\kappa^{-1}}\right)^\vee\right)$ and $\Div\left(H^0\left(\Q_\Sigma/\Q , \D^\star_{\varrho\otimes\kappa^{-1}}\right)^\vee\right)$ are all principal. Combining these observations lets us conclude that the following divisor in $Z^1\left(\RRR[[\Gamma_\Cyc]]\right)$ is principal:
\begin{align*}
\Div\left(\Sel(\Q,\D_{\varrho\otimes\kappa^{-1}})^\vee\right) - \Div\left(H^0\left(\Q_\Sigma/\Q , \D_{\varrho\otimes\kappa^{-1}}\right)^\vee\right) - \Div\left(H^0\left(\Q_\Sigma/\Q , \D^\star_{\varrho\otimes\kappa^{-1}}\right)^\vee\right).
\end{align*}
This completes the proof of the Theorem.
\end{proof}

\section{Hida families} \label{sec:Hida}

The hypotheses \ref{hyp:residual-irr} and \ref{hyp:p-distinguished} let us deduce that the Galois representations $\rho_F$, $\rho_G$ and $\rho_H$ associated to the Hida families $F$, $G$ and $H$ respectively satisfy Greenberg's Panchishkin condition. We have a $\Gal{\overline{\Q}_p}{\Q_p}$-equivariant short exact sequence $0 \rightarrow \Fil L_F \rightarrow L_F \rightarrow \frac{L_F}{ \Fil L_F} \rightarrow 0$ of free $R_F$-modules satisfying the following properties:
\begin{itemize}
\item $\mathrm{Rank}_{R_F}\Fil L_F = \mathrm{Rank}_{R_F} \frac{L_F}{ \Fil L_F} =1$,
\item  The action of $\Gal{\overline{\Q}_p}{\Q_p}$ on $\Fil L_F$  is via a ramified character $\delta_F : \Gal{\overline{\Q}_p}{\Q_p} \rightarrow R_F^\times$.
\item The action of $\Gal{\overline{\Q}_p}{\Q_p}$ on $\frac{L_F}{\Fil L_F}$ is via an unramified character $\epsilon_F : \Gal{\overline{\Q}_p}{\Q_p} \rightarrow R_F^\times$ such that $\epsilon_F(\Frob_p) = a_p(F)$. Here, $\Frob_p$ denotes the arithmetic Frobenius at $p$ and $a_p(F)$ denotes the $p$-th Fourier coefficient of $F$.
\end{itemize}
One can similarly associate $\Gal{\overline{\Q}_p}{\Q_p}$-equivariant filtrations $0 \rightarrow \Fil L_G \rightarrow L_G \rightarrow \frac{L_G}{ \Fil L_G} \rightarrow 0$ and $0 \rightarrow \Fil L_H \rightarrow L_H \rightarrow \frac{L_H}{ \Fil L_H} \rightarrow 0$ of free $R_G$ and $R_H$ modules associated to the Hida families  $G$ and $H$ respectively. For more details, one can refer to the summary of Hida's works (\cite{hida1986galois}, \cite{MR868300}) given in Section 2 of the paper by Emerton, Pollack and Weston \cite{emerton2006variation}.

\subsection{Tensor product of two Hida families}

We have a 4-dimensional Galois representation $\rhoDN{4}{2}:G_\Sigma \rightarrow \Gl_4\left(R_{F,G}\right)$, given by the action of $G_\Sigma$ on $L_{\pmb{4},{2}}:=L_F \ \hotimes \  L_G  \ \hotimes \ O (\psi)$, a free $R_{F,G}$-module of rank four. The Galois representation $\rhoDN{4}{3}$ that is described in the introduction, is isomorphic to $\rhoDN{4}{2} \otimes \kappa^{-1}$, the cyclotomic twist deformation of $\rhoDN{4}{2}$. Note that in this case the dimension  $d^+$ of the $+1$ eigenspace for the action of complex conjugation is equal to $2$. We will define the Selmer group $\Sel_\One(\Q,D_{\pmb{4},3})$ when $F$ is the dominant Hida family. One can similarly define the Selmer group $\Sel_\Two(\Q,D_{\pmb{4},3})$ in the case when $G$ is the dominant Hida family.

To define the Selmer group in the case when $F$ is the dominant Hida family, we will consider the following $\Gal{\overline{\Q}_p}{\Q_p}$-equivariant short exact sequence of free $R_{F,G}$-modules:
\begin{align*}
0 \rightarrow \mathrm{Fil}_{\One} L_{\pmb{4},2}  \rightarrow  L_{\pmb{4},2} \rightarrow \frac{ L_{\pmb{4},2}}{ \mathrm{Fil}_{\One} L_{\pmb{4},2}} \rightarrow 0,
\end{align*}
where $\mathrm{Fil}_\One L_{\pmb{4},2}$ denotes $\Fil L_F \ \hotimes \ L_G  \ \hotimes \ O(\psi)$, a free $R_{F,G}$-module of rank $2$. Note that in this case the condition (\ref{p-critical}) is satisfied since $\mathrm{Rank}_{R_{F,G}} \mathrm{Fil}_{\One} L_{\pmb{4},2} = d^+ = 2$.

The intersection $\mathrm{Fil}_{\mathrm{int}}L_{\pmb{4},2}:= \mathrm{Fil}_\One L_{\pmb{4},2} \bigcap \mathrm{Fil}_\Two L_{\pmb{4},2}$  equals $\Fil L_F \ \hotimes \ \Fil L_G  \ \hotimes \ O(\psi)$, a  free $R_{F,G}$-module of rank $1$.  It is also straightforward to check that the sum $\mathrm{Fil}_{\mathrm{sum}}L_{\pmb{4},2}:= \mathrm{Fil}_\One L_{\pmb{4},2} + \mathrm{Fil}_\Two L_{\pmb{4},2}$  is a  free $R_{F,G}$-module of rank $3$. Another straightforward computation lets us deduce that we have the following $\Gal{\overline{\Q}_p}{\Q_p}$-equivariant short exact sequences of free $R_{F,G}$-modules:
\begin{align*}
0 \rightarrow \mathrm{Fil}_{\mathrm{int}}L_{\pmb{4},2} \rightarrow  L_{\pmb{4},2} \rightarrow \frac{L_{\pmb{4},2}}{\mathrm{Fil}_{\mathrm{int}}L_{\pmb{4},2}} \rightarrow 0, \quad 0 \rightarrow \mathrm{Fil}_{\mathrm{sum}}L_{\pmb{4},2} \rightarrow  L_{\pmb{4},2} \rightarrow \frac{L_{\pmb{4},2}}{\mathrm{Fil}_{\mathrm{sum}}L_{\pmb{4},2}} \rightarrow 0
 \end{align*}

\begin{remark}
In the first case, the critical set of specializations $C_\One \subset \Hom_\cont\left(R_{F,G}[[\Gamma_\Cyc]],\overline{\Q}_p\right)$ is the set of continuous ring homorphisms $\phi: R_{F,G}[[\Gamma_\Cyc]] \rightarrow \overline{\Q}_p$ uniquely characterized by the following maps:
\begin{align} \label{eq:maps}
\phi_{F,k}: R_F \rightarrow  \overline{\Q}_p, \qquad \phi_{G,l}: R_F \rightarrow \overline{\Q}_p, \qquad \epsilon_p^{1-n}: \Gamma_\Cyc \rightarrow \overline{\Q}_p^\times,
\end{align}
where $\phi_{F,k}$ and $\phi_{G,l}$ vary over classical (weight $k$ and weight $l$ respectively) specializations of $F$ and $G$ respectively subject to the following condition:
\begin{align*}
\mathrm{Weight}(\phi_{F,k}) -1 \geq  n \geq \mathrm{Weight}(\phi_{G,l}) \geq 2.
\end{align*}
Here, $\epsilon_p : \Gamma_\Cyc \cong 1 + p\Z_p \hookrightarrow \overline{\Q}_p^\times$ is the canonical map sending the Frobenius at all primes $l \neq p$ to $ l /\omega(l)$ in $1+p\Z_p$.

In the second case, the critical set of specializations $C_\Two \subset \Hom_\cont\left(R_{F,G}[[\Gamma_\Cyc]],\overline{\Q}_p\right)$ is the set of continuous ring homorphisms $\phi: R_{F,G}[[\Gamma_\Cyc]] \rightarrow \overline{\Q}_p$ uniquely characterized by maps in equation (\ref{eq:maps}) subject to the following condition:
\begin{align*}
\mathrm{Weight}(\phi_{G,l}) -1  \geq  n \geq \mathrm{Weight}(\phi_{F,k}) \geq 2.
\end{align*}
\end{remark}

\subsection{Tensor product of three Hida families}

We have an 8-dimensional Galois representation $\rhoDN{8}{3}:G_\Sigma \rightarrow \Gl_8\left(R_{F,G,H}\right)$, given by the action of $G_\Sigma$ on $L_{\pmb{8},{3}}:=L_F \ \hotimes \  L_G\  \hotimes L_H \ \hotimes \ O (\psi)$, a free $R_{F,G,H}$-module of rank eight. The Galois representation $\rhoDN{8}{4}$ that is described in the introduction, is isomorphic to $\rhoDN{8}{3} \otimes \kappa^{-1}$, the cyclotomic twist deformation of $\rhoDN{8}{3}$. Note that in this case the dimension $d^+$ of the $+1$ eigenspace for the action of complex conjugation is equal to $4$.

\subsubsection{The unbalanced case ($F$ is the dominant Hida family)}

To define the Selmer group in the unbalanced case (when $F$ is the dominant Hida family), we will consider the following $\Gal{\overline{\Q}_p}{\Q_p}$-equivariant short exact sequence of free $R_{F,G,H}$-modules:
\begin{align*}
0 \rightarrow \mathrm{Fil}^{\unb} L_{\pmb{8},3}  \rightarrow  L_{\pmb{8},3} \rightarrow \frac{ L_{\pmb{8},3}}{ \mathrm{Fil}^{\unb} L_{\pmb{8},3}} \rightarrow 0,
\end{align*}
where $\mathrm{Fil}^{\unb} L_{\pmb{8},3}$ denotes $\Fil L_F \ \hotimes \ L_G  \ \hotimes \ L_H \ \hotimes \ O(\psi)$, a free $R_{F,G,H}$-module  of rank $4$. Note that in this case the condition (\ref{p-critical}) is satisfied since $\mathrm{Rank}_{R_{F,G,H}} \mathrm{Fil}^{\unb} L_{\pmb{8},3} = d^+ = 4$.  We let $\Sel_\One(\Q,D_{\pmb{4},3})$ denote the corresponding discrete Selmer group associated to $\rhoDN{8}{4}$.

\subsubsection{The balanced case}

To define the Selmer group in the balanced case, we will consider the following $\Gal{\overline{\Q}_p}{\Q_p}$-equivariant short exact sequence of free $R_{F,G,H}$-modules:
\begin{align*}
0 \rightarrow \mathrm{Fil}^{\bal} L_{\pmb{8},3}  \rightarrow  L_{\pmb{8},3} \rightarrow \frac{ L_{\pmb{8},3}}{ \mathrm{Fil}^{\bal} L_{\pmb{8},3}} \rightarrow 0,
\end{align*}
where $\mathrm{Fil}^{\bal} L_{\pmb{8},3}$ is defined to be the following $R_{F,G,H}$-module of rank $4$:
{\begin{align*}
\bigg(\Fil L_F \ \hotimes \ \Fil L_G  \ \hotimes \ L_H \ \hotimes \ O(\psi)\bigg)  & + \bigg(\Fil L_F \ \hotimes \ L_G  \ \hotimes \ \Fil L_H \ \hotimes \ O(\psi)\bigg)  \\ & +  \bigg(L_F \ \hotimes \ \Fil L_G  \ \hotimes \ \Fil L_H \ \hotimes \ O(\psi)\bigg).
\end{align*}

Note that in this case the condition (\ref{p-critical}) is satisfied since $ \mathrm{Rank}_{R_{F,G,H}} \mathrm{Fil}^{\bal} L_{\pmb{8},3} = d^+ = 4$. We let $\Sel_\Two(\Q,D_{\pmb{8},4})$ denote the corresponding discrete Selmer group associated to $\rhoDN{8}{4}$.

The intersection $\mathrm{Fil}_{\mathrm{int}} L_{\pmb{8},3}:= \mathrm{Fil}^{\unb} L_{\pmb{8},3} \bigcap  \mathrm{Fil}^{\bal} L_{\pmb{8},3}$  equals the following free $R_{F,G,H}$-module of rank $3$:
\begin{align} \label{eq:int_ten3}
\bigg(\Fil L_F \ \hotimes \ \Fil L_G  \ \hotimes \ L_H \ \hotimes \ O(\psi)\bigg)  + \bigg(\Fil L_F \ \hotimes \ L_G  \ \hotimes \ \Fil L_H \ \hotimes \ O(\psi)\bigg) .
\end{align}

It is also straightforward to check that the sum $\mathrm{Fil}_{\mathrm{sum}}L_{\pmb{4},2}:= \mathrm{Fil}^{\unb} L_{\pmb{8},3} +  \mathrm{Fil}^{\bal} L_{\pmb{8},3}$  is a  free $R_{F,G,H}$-module of rank $5$. Another straightforward computation lets us deduce that we have the following $\Gal{\overline{\Q}_p}{\Q_p}$-equivariant short exact sequences of free $R_{F,G,H}$-modules:
\begin{align*}
0 \rightarrow \mathrm{Fil}_{\mathrm{int}}L_{\pmb{8},3} \rightarrow L_{\pmb{8},3} \rightarrow \frac{L_{\pmb{8},3}}{\mathrm{Fil}_{\mathrm{int}}L_{\pmb{8},3}} \rightarrow 0, \quad 0 \rightarrow \mathrm{Fil}_{\mathrm{sum}}L_{\pmb{8},3} \rightarrow L_{\pmb{8},3} \rightarrow \frac{L_{\pmb{8},3}}{\mathrm{Fil}_{\mathrm{sum}}L_{\pmb{8},3}} \rightarrow 0
 \end{align*}

\begin{remark}
In the unbalanced case, the critical set of specializations $C_{\mathrm{unb}} \subset \Hom_\cont\left(R_{F,G,H}[[\Gamma_\Cyc]],\overline{\Q}_p\right)$ is the set of continuous ring homorphisms $\phi: R_{F,G,H}[[\Gamma_\Cyc]] \rightarrow \overline{\Q}_p$ uniquely characterized by the following maps:
\begin{align} \label{eq:maps_trip}
\phi_{F,k}: R_F \rightarrow  \overline{\Q}_p, \qquad \phi_{G,l}: R_F \rightarrow \overline{\Q}_p, \qquad \phi_{H,m}: R_F \rightarrow  \overline{\Q}_p, \qquad \epsilon_p^{1-n}: \Gamma_\Cyc \rightarrow \overline{\Q}_p^\times,
\end{align}
where $\phi_{F,k}$, $\phi_{G,l}$ and $\phi_{H,m}$ vary over classical (weight $k \geq 2$, weight $l \geq 2$ and weight $m \geq 2$) specializations of $F$, $G$ and $H$ respectively subject to the following condition:
\begin{align*}
\mathrm{Weight}(\phi_{F,k}) -1 \geq   n  \geq \mathrm{Weight}(\phi_{G,l}) + \mathrm{Weight}(\phi_{H,m}) -1.
\end{align*}

In the balanced case, the critical set of specializations $C_{\mathrm{bal}} \subset \Hom_\cont\left(R_{F,G,H}[[\Gamma_\Cyc]],\overline{\Q}_p\right)$ is the set of continuous ring homorphisms $\phi: R_{F,G,H}[[\Gamma_\Cyc]] \rightarrow \overline{\Q}_p$ uniquely characterized by the maps in equation (\ref{eq:maps_trip}) subject to all the following three conditions:
\begin{itemize}
\item $\mathrm{Weight}(\phi_{F,k})   \leq   n \leq \mathrm{Weight}(\phi_{G,l}) + \mathrm{Weight}(\phi_{H,m}) -2 $,
\item $\mathrm{Weight}(\phi_{G,l})   \leq   n \leq \mathrm{Weight}(\phi_{F,k}) + \mathrm{Weight}(\phi_{H,m}) - 2 $, and
\item  $\mathrm{Weight}(\phi_{H,m})   \leq   n  \leq \mathrm{Weight}(\phi_{F,k}) + \mathrm{Weight}(\phi_{G,l}) - 2$.
\end{itemize}
\end{remark}

\section{Description of $\cZ(\Q,D_{\pmb{d},n})$ and $\cZ(\Q,D^\star_{\pmb{d},n})$} \label{sec:desc}

In this section, we describe the construction of the modules $\cZ(\Q,D_{\pmb{d},n})$ and $\cZ(\Q,D^\star_{\pmb{d},n})$ that appear in the statement of Theorem \ref{maintheorem}. It may be helpful to point out that under the validity of the Iwasawa main conjectures, by \cite[Proposition 4.1]{lei2018codimension}, the hypothesis \ref{hyp:gcd} is equivalent to the condition that both $\cZ(\Q,D_{\pmb{d},n})$ and $\cZ(\Q,D^\star_{\pmb{d},n})$ are pseudo-null modules.

Let us first consider a general setting. Suppose to a Galois representation $\varrho : G_\Sigma \rightarrow \Gl_d(\RRR)$, one can associate two filtrations:
\begin{align*}
0\rightarrow \mathrm{Fil}_\One \LLL_\varrho \rightarrow \LLL_\varrho \rightarrow \frac{\LLL_\varrho}{\mathrm{Fil}_\One \LLL_\varrho} \rightarrow 0, \quad 0\rightarrow \mathrm{Fil}_\Two \LLL_\varrho \rightarrow \LLL_\varrho \rightarrow \frac{\LLL_\varrho}{\mathrm{Fil}_\Two \LLL_\varrho} \rightarrow 0,
\end{align*}
(that is, two $\Gal{\overline{\Q}_p}{\Q_p}$-equivariant short exact sequences of free $\RRR$-modules) satisfying the condition (\ref{p-critical}). We will consider the following $\RRR$-modules inside $\LLL_\varrho$:
\begin{align*}
\mathrm{Fil}_{\mathrm{int}} \LLL_\varrho := \mathrm{Fil}_\One \LLL_\varrho \bigcap \mathrm{Fil}_\Two, \LLL_\varrho, \qquad \mathrm{Fil}_{\mathrm{sum}} \LLL_\varrho := \mathrm{Fil}_\One \LLL_\varrho + \mathrm{Fil}_\Two, \LLL_\varrho,.
\end{align*}

Suppose also that all the $\RRR$-modules in the following short exact sequences are free:
\begin{align}\label{eq:sumint}
0 \rightarrow \mathrm{Fil}_{\mathrm{int}} \LLL_\varrho \rightarrow \LLL_\varrho \rightarrow \frac{\LLL_\varrho}{\mathrm{Fil}_{\mathrm{int}}\LLL_\varrho} \rightarrow 0, \quad 0 \rightarrow \mathrm{Fil}_{\mathrm{sum}}\LLL_\varrho \rightarrow \LLL_\varrho\rightarrow \frac{\LLL_\varrho}{\mathrm{Fil}_{\mathrm{sum}}\LLL_\varrho} \rightarrow 0.
 \end{align}

\begin{remark}
The existence of two filtrations satisfying the hypothesis in equation (\ref{eq:sumint}) for the Galois representations $\rho_{\pmb{4},{2}}$ and $\rho_{\pmb{8},3}$ has been discussed in Section \ref{sec:Hida}.
\end{remark}

Consider the following $\RRR[[\Gamma_\Cyc]]$-submodules of $\D_{\varrho\otimes \kappa^{-1}}$:
\begin{align*}
& \mathrm{Fil}_{\One}\D_{\varrho\otimes \kappa^{-1}} := \mathrm{Fil}_{\One} \LLL_\varrho \otimes_\RRR \RRR[[\Gamma_\Cyc]](\kappa^{-1}),  \qquad && \mathrm{Fil}_{\Two}\D_{\varrho\otimes \kappa^{-1}}:= \mathrm{Fil}_{\Two} \LLL_\varrho \otimes_\RRR \RRR[[\Gamma_\Cyc]](\kappa^{-1}), \\
& \mathrm{Fil}_{\mathrm{int}}\D_{\varrho\otimes \kappa^{-1}} := \mathrm{Fil}_{\mathrm{int}} \LLL_\varrho \otimes_\RRR \RRR[[\Gamma_\Cyc]](\kappa^{-1}),  \qquad && \mathrm{Fil}_{\mathrm{sum}}\D_{\varrho\otimes \kappa^{-1}} := \mathrm{Fil}_{\mathrm{sum}} \LLL_\varrho \otimes_\RRR \RRR[[\Gamma_\Cyc]](\kappa^{-1}).
\end{align*}
Note that we have the following equality inside $\D_{\varrho\otimes \kappa^{-1}}$:
\begin{align*}
\mathrm{Fil}_{\mathrm{int}}\D_{\varrho\otimes \kappa^{-1}} = \mathrm{Fil}_{\One} \D_{\varrho\otimes \kappa^{-1}} \bigcap \mathrm{Fil}_{\Two} \D_{\varrho\otimes \kappa^{-1}} , \qquad \mathrm{Fil}_{\mathrm{sum}}\D_{\varrho\otimes \kappa^{-1}} = \mathrm{Fil}_{\One} \D_{\varrho\otimes \kappa^{-1}} + \mathrm{Fil}_{\Two} \D_{\varrho\otimes \kappa^{-1}}.
\end{align*}

Let $\bullet$ denote an element in the set $\{\One, \Two, \mathrm{int}, \mathrm{sum}\}$. We define the following local conditions inside $H^1\left(\Q_p,D_{\varrho\otimes \kappa^{-1}}\right)$:
\begin{align*}
\Loc_{\bullet}(\Qp,\D_{\varrho\otimes \kappa^{-1}})&:=\ker\left(H^1\left(\Q_p,\D_{\varrho\otimes \kappa^{-1}}\right) \rightarrow H^1\left(\Q_p,\frac{\D_{\varrho\otimes \kappa^{-1}}}{\mathrm{Fil}_{\bullet}\D_{\varrho\otimes \kappa^{-1}}}\right) \right).
\end{align*}

Recall the following hypothesis from the introduction.

\begin{enumerate}[leftmargin=1.9cm, style=sameline, align=left, label={$\textsc{Loc}\mathrm{_p(0)}$ \textemdash}, ref={$\textsc{Loc}\mathrm{_p(0)}$}]
\item Neither the trivial representation nor the Teichm\"uller character $\omega$ is a Jordan-Holder component of the residual representation $\overline{\rho}$ for the action of $\Gal{\overline{\Q}_p}{\Q_p}$.
\end{enumerate}

\begin{proposition}\label{prop:locfree}
Suppose that $\varrho$ satisfies \ref{hyp:locp0}. Then, the $\RRR[[\Gamma_\Cyc]]$-modules $\Loc_\bullet\left(\Q_p,\D_{\varrho\otimes \kappa^{-1}}\right)^\vee$ and $\left(\dfrac{H^1\left(\Q_p, \D_{\varrho\otimes \kappa^{-1}}\right)}{\Loc_\bullet\left(\Q_p,\D_{\varrho\otimes \kappa^{-1}}\right)}\right)^\vee$ are free.
\end{proposition}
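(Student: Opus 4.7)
The plan is to reduce the freeness of $\Loc_\bullet(\Q_p,\D_{\varrho\otimes \kappa^{-1}})^\vee$ and of its quotient to a freeness statement for the local $H^1$ of a cofree discrete module. The starting point is the long exact sequence in Galois cohomology associated to
\begin{equation*}
0 \rightarrow \mathrm{Fil}_{\bullet}\D_{\varrho\otimes \kappa^{-1}} \rightarrow \D_{\varrho\otimes \kappa^{-1}} \rightarrow \D_{\varrho\otimes \kappa^{-1}}/\mathrm{Fil}_{\bullet}\D_{\varrho\otimes \kappa^{-1}} \rightarrow 0.
\end{equation*}
By construction, $\Loc_\bullet(\Q_p,\D_{\varrho\otimes \kappa^{-1}})$ is the image of $H^1(\Q_p, \mathrm{Fil}_\bullet \D_{\varrho\otimes\kappa^{-1}})$ inside $H^1(\Q_p, \D_{\varrho\otimes \kappa^{-1}})$, and this suggests exploiting the long exact sequence provided the $H^0$ and $H^2$ of the outer terms can be made to vanish.

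The hypothesis \ref{hyp:locp0} passes to subquotients of the residual representation, so it holds for the residual representations of $\mathrm{Fil}_\bullet \rho$ and $\rho/\mathrm{Fil}_\bullet \rho$. Combining Nakayama's lemma and Pontryagin duality as in the proof of Proposition \ref{prop:str-prim}, and using that $\kappa$ becomes trivial modulo the maximal ideal of $\RRR[[\Gamma_\Cyc]]$, one deduces that $H^0(\Q_p, \mathrm{Fil}_\bullet \D_{\varrho\otimes \kappa^{-1}}) = H^0(\Q_p, \D_{\varrho\otimes \kappa^{-1}}/\mathrm{Fil}_\bullet \D_{\varrho\otimes \kappa^{-1}}) = 0$. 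Local Tate duality then translates the corresponding $H^2$-vanishing into an $H^0$-vanishing on the Tate dual, where the cyclotomic character contributes an extra $\omega$-twist at the residual level; the second half of \ref{hyp:locp0} now gives $H^2(\Q_p, \mathrm{Fil}_\bullet \D_{\varrho\otimes\kappa^{-1}}) = H^2(\Q_p, \D_{\varrho\otimes \kappa^{-1}}/\mathrm{Fil}_\bullet \D_{\varrho\otimes \kappa^{-1}}) = 0$. With these four vanishings, the long exact sequence collapses to yield canonical isomorphisms
\begin{equation*}
\Loc_\bullet(\Q_p, \D_{\varrho\otimes\kappa^{-1}}) \cong H^1(\Q_p, \mathrm{Fil}_\bullet \D_{\varrho\otimes\kappa^{-1}}),
\end{equation*}
\begin{equation*}
\frac{H^1(\Q_p, \D_{\varrho\otimes\kappa^{-1}})}{\Loc_\bullet(\Q_p, \D_{\varrho\otimes\kappa^{-1}})} \cong H^1\left(\Q_p,\, \D_{\varrho\otimes\kappa^{-1}}/\mathrm{Fil}_\bullet \D_{\varrho\otimes\kappa^{-1}}\right).
\end{equation*}

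It remains to prove that for any cofree discrete $\RRR[[\Gamma_\Cyc]]$-module $W$ with continuous $\Gal{\overline{\Q}_p}{\Q_p}$-action satisfying $H^0(\Q_p, W) = H^2(\Q_p, W) = 0$, the Pontryagin dual $H^1(\Q_p, W)^\vee$ is a free $\RRR[[\Gamma_\Cyc]]$-module. I expect this to be the main obstacle, since the vanishings alone only yield the rank (via the local Euler--Poincar\'e characteristic formula) and not freeness. My approach is to invoke the relevant structural result from Greenberg's \cite{MR2290593} together with \cite{palvannan2016algebraic}: one computes local Galois cohomology via the two-term Iwasawa complex for the free $\RRR[[\Gamma_\Cyc]]$-module $W^\vee$, and the vanishing of the outer cohomologies, combined with the fact that $\RRR[[\Gamma_\Cyc]]$ is a local ring, forces this complex to represent a free module concentrated in a single degree. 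Applying this freeness result to $W = \mathrm{Fil}_\bullet \D_{\varrho\otimes\kappa^{-1}}$ and to $W = \D_{\varrho\otimes\kappa^{-1}}/\mathrm{Fil}_\bullet \D_{\varrho\otimes\kappa^{-1}}$ in light of the isomorphisms above then completes the proof.
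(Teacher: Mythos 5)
Your reduction is exactly the paper's: \ref{hyp:locp0} gives the vanishing of $H^0(\Q_p,-)$ for $\mathrm{Fil}_\bullet\D_{\varrho\otimes\kappa^{-1}}$, $\D_{\varrho\otimes\kappa^{-1}}$ and the quotient (the paper quotes Corollary 3.1.1 of \cite{MR2290593}, which is the Nakayama argument you describe), local Tate duality together with the $\omega$-part of the hypothesis kills the corresponding $H^2$'s, the long exact sequence collapses, and $\Loc_\bullet(\Q_p,\D_{\varrho\otimes\kappa^{-1}})$ and $H^1(\Q_p,\D_{\varrho\otimes\kappa^{-1}})/\Loc_\bullet(\Q_p,\D_{\varrho\otimes\kappa^{-1}})$ are identified with $H^1(\Q_p,\mathrm{Fil}_\bullet\D_{\varrho\otimes\kappa^{-1}})$ and $H^1\left(\Q_p,\D_{\varrho\otimes\kappa^{-1}}/\mathrm{Fil}_\bullet\D_{\varrho\otimes\kappa^{-1}}\right)$. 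The paper then finishes by invoking Proposition 5.10 and Remark 5.10.1 of \cite{MR2290593}, and you also point to Greenberg for this last step.

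The gap is in your stated mechanism for that last step. For a cofree discrete module $W$, the vanishing of $H^0(\Q_p,W)$ and $H^2(\Q_p,W)$ together with the locality of $\RRR[[\Gamma_\Cyc]]$ does \emph{not} force the three-term complex $C_2\rightarrow C_1\rightarrow C_0$ of free modules computing $H^i(\Q_p,W)^\vee$ to degenerate to a free module in one degree: those vanishings only say that $C_1\rightarrow C_0$ is split surjective and $C_2\rightarrow C_1$ is injective, so one obtains a presentation $0\rightarrow C_2\rightarrow K\rightarrow H^1(\Q_p,W)^\vee\rightarrow 0$ with $C_2$ and $K$ free, i.e.\ projective dimension at most one, not freeness. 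Freeness needs $\Tor_1\left(H^1(\Q_p,W)^\vee,k\right)=0$ over the residue field $k$, and this is supplied by the vanishing of the \emph{residual} cohomology $H^2(\Q_p,W[\mathfrak{m}])$, equivalently by applying the $\omega$-half of \ref{hyp:locp0} again at the mod-$\mathfrak{m}$ level; this residual input is precisely the content of Greenberg's Proposition 5.10 and Remark 5.10.1 that the paper cites. That the $\RRR[[\Gamma_\Cyc]]$-level vanishings alone are insufficient is shown by $\Lambda=\Z_p[[T]]$ and $W=\Lambda^\vee(\omega\lambda)$ with $\lambda$ unramified, $\lambda(\Frob_p)=1+T$: here $H^0(\Q_p,W)=H^2(\Q_p,W)=0$, yet $H^1(\Q_p,W)^\vee$ is $T$-torsion-free with $H^1(\Q_p,W)^\vee/T\cong H^1\left(\Q_p,\Q_p/\Z_p(\omega)\right)^\vee\cong \Z_p\oplus \Z/p^a$ with $a\geq 1$, so it is not free (of course $\omega$ occurs residually, so \ref{hyp:locp0} fails, consistent with the proposition). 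So your proof goes through only if you genuinely import Greenberg's structural result with its residual hypotheses, as the paper does; the argument you sketch in its place would fail.
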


\begin{proof}
Since the trivial representation is not a Jordan-Holder component of the residual representation $\overline{\varrho}$, Corollary 3.1.1 in \cite{MR2290593} lets us deduce the following equality of discrete $\RRR[[\Gamma_\Cyc]]$-modules:
\begin{align*}
H^0\left(\Q_p,\mathrm{Fil}_\bullet \D_{\varrho\otimes \kappa^{-1}} \right) &= H^0\left(\Q_p,\D_{\varrho\otimes \kappa^{-1}} \right) = H^0\left(\Q_p,\frac{\D_{\varrho\otimes \kappa^{-1}}}{\mathrm{Fil}_\bullet \D_{\varrho\otimes \kappa^{-1}}} \right) = 0,
\end{align*}
Local Tate duality and Proposition 4.1 in \cite{palvannan2016algebraic} let us deduce the following equality of discrete $\RRR[[\Gamma_\Cyc]]$-modules:
\begin{align*}
H^2\left(\Q_p,\mathrm{Fil}_\bullet \D_{\varrho\otimes \kappa^{-1}} \right) &= H^2\left(\Q_p,\D_{\varrho\otimes \kappa^{-1}} \right) = H^2\left(\Q_p,\frac{\D_{\varrho\otimes \kappa^{-1}}}{\mathrm{Fil}_\bullet D_{\varrho\otimes \kappa^{-1}}} \right) = 0.
\end{align*}
To the short exact sequence (\ref{eq:filtration}), we can consider the long exact sequence in continuous group cohomology, for the group $\Gal{\overline{\Q}_p}{\Q_p}$. We obtain the following short exact sequence of discrete $\RRR[[\Gamma_\Cyc]]$-modules:
\begin{align*}
0 \rightarrow H^1\left(\Q_p,\mathrm{Fil}_\bullet \D_{\varrho\otimes \kappa^{-1}} \right) \rightarrow  H^1 \left(\Q_p,\D_{\varrho\otimes \kappa^{-1}} \right) \rightarrow  H^1\left(\Q_p,\frac{\D_{\varrho\otimes \kappa^{-1}}}{\mathrm{Fil}_\bullet \D_{\varrho\otimes \kappa^{-1}}} \right) \rightarrow 0.
\end{align*}
As a result, the discrete $\RRR[[\Gamma_\Cyc]]$-module $\Loc_\bullet\left(\Q_p,\D_{\varrho\otimes \kappa^{-1}}\right)$ is isomorphic to $H^1\left(\Q_p,\mathrm{Fil}_\bullet \D_{\varrho\otimes \kappa^{-1}} \right)$ and discrete $\RRR[[\Gamma_\Cyc]]$-module $\dfrac{H^1 \left(\Q_p,\D_{\varrho\otimes \kappa^{-1}} \right)}{\Loc_\bullet\left(\Q_p,\D_{\varrho\otimes \kappa^{-1}}\right)}$ is isomorphic to $H^1\left(\Q_p,\frac{\D_{\varrho\otimes \kappa^{-1}}}{\mathrm{Fil}_\bullet \D_{\varrho\otimes \kappa^{-1}}} \right)$. Proposition 5.10 and Remark 5.10.1 in \cite{MR2290593} let us conclude that the $\RRR[[\Gamma_\Cyc]]$-module $H^1\left(\Q_p,\mathrm{Fil}_\bullet \D_{\varrho\otimes \kappa^{-1}} \right)^\vee$ and $H^1\left(\Q_p,\frac{\D_{\varrho\otimes \kappa^{-1}}}{\mathrm{Fil}_\bullet \D_{\varrho\otimes \kappa^{-1}}} \right)^\vee$ are free. The proposition follows.
\end{proof}

\subsubsection*{Description of $\cZ(\Q,D_{\pmb{d},n})$}

Follow all the notations and hypotheses of Theorem \ref{maintheorem}. Let $\left(\rho,R,d,n\right)$ equal $\left(\rhoDN{4}{3},R_{F,G}[[\Gamma_\Cyc]],4,3\right)$ or $\left(\rhoDN{8}{4},R_{F,G,H}[[\Gamma_\Cyc]],8,4\right)$. As in \cite{lei2018codimension}, the $R$-module $\cZ(\Q,D_{\pmb{d},n})$ is defined to be the Pontryagin dual of
\begin{align} \label{eq:Z1def}
 \ker\bigg(H^1\left(G_\Sigma,D_{\pmb{d},n}\right) & \rightarrow   \frac{H^1\left(\Q_p,D_{\pmb{d},n}\right)}{\Loc_{\One}\left(\Q_p,D_{\pmb{d},n}\right) \bigcap \Loc_{\Two}\left(\Q_p,D_{\pmb{d},n}\right)} \oplus \bigoplus_{l \in \Sigma_0} H^1\left(\Q_l,D_{\pmb{d},n}\right)  \bigg).
  \end{align}
That is, the $R$-module $\cZ(\Q,D_{\pmb{d},n})$ is isomorphic to the Pontryagin dual of the intersection $\Sel_{\One}(\Q,D_{\rhoDN{d}{n}}) \bigcap \Sel_{\Two}(\Q,D_{\rhoDN{d}{n}})$ inside $H^1\left(G_\Sigma, D_{\rhoDN{d}{n}} \right)$. We observe that $\cZ(\Q,D_{\pmb{d},n})$ fits into the following surjection of $R$-modules:
\begin{align*}
  \underbrace{H^1\left(G_\Sigma, D_{\rhoDN{d}{n}} \right)^\vee}_{\substack{\text{Conjecturally} \\ \text{has $R$-rank $d^-$}}}  \twoheadrightarrow \underbrace{\substack{\Sel_{\One}(\Q,D_{\rhoDN{d}{n}})^\vee \\ \Sel_{\Two}(\Q,D_{\rhoDN{d}{n}})^\vee}}_{\substack{\text{Conjecturally} \\ \text{ $R$-torsion}}} \twoheadrightarrow \underbrace{\ZZZ(\Q,D_{\rhoDN{d}{n}})}_{\substack{\text{Pseudo-null, assuming}\\ \text{the hypotheses in Theorem \ref{maintheorem}}}} \twoheadrightarrow \Sha^{1}\left(\Q,D_{\rhoDN{d}{n}}\right)^\vee.
\end{align*}

\begin{proposition} \label{prop:sumint}
Suppose that $\varrho$ satisfies \ref{hyp:locp0}. Then, we have the following equalities inside $H^1\left(\Q_p,\D_{\varrho\otimes \kappa^{-1}}\right)$:
\begin{align*}
\Loc_\One(\Qp,\D_{\varrho\otimes \kappa^{-1}})\cap \Loc_\Two(\Qp,\D_{\varrho\otimes \kappa^{-1}})=\Loc_\Int(\Qp,\D_{\varrho\otimes \kappa^{-1}}),\\
\Loc_\One(\Qp,\D_{\varrho\otimes \kappa^{-1}})+\Loc_\Two(\Qp,\D_{\varrho\otimes \kappa^{-1}})=\Loc_\Sum(\Qp,\D_{\varrho\otimes \kappa^{-1}}).
\end{align*}
\end{proposition}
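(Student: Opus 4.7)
The plan is to apply the cohomological argument of Proposition~\ref{prop:locfree} to the auxiliary short exact sequence of $\Gal{\overline{\Q}_p}{\Q_p}$-equivariant $R[[\Gamma_\Cyc]]$-modules
\[
0 \to \Fil_\Int \D_{\varrho \otimes \kappa^{-1}} \xrightarrow{\Delta} \Fil_\One \D_{\varrho \otimes \kappa^{-1}} \oplus \Fil_\Two \D_{\varrho \otimes \kappa^{-1}} \xrightarrow{\delta} \Fil_\Sum \D_{\varrho \otimes \kappa^{-1}} \to 0,
\]
where $\Delta$ is the diagonal inclusion and $\delta(x, y) := x - y$ viewed inside $\Fil_\Sum \D_{\varrho \otimes \kappa^{-1}}$. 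Exactness follows directly from the set-theoretic definitions $\Fil_\Int = \Fil_\One \cap \Fil_\Two$ and $\Fil_\Sum = \Fil_\One + \Fil_\Two$, combined with the freeness assumptions in \eqref{eq:sumint}.

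First I would observe that, since tensoring the short exact sequences in \eqref{eq:sumint} with the residue field preserves exactness, every Jordan--H\"older component of $\overline{\Fil_\bullet \LLL_\varrho}$ and of $\overline{\LLL_\varrho / \Fil_\bullet \LLL_\varrho}$ is a Jordan--H\"older component of $\overline{\varrho}$ for each $\bullet \in \{\One, \Two, \Int, \Sum\}$. The hypothesis \ref{hyp:locp0} then lets me replicate the argument from the proof of Proposition~\ref{prop:locfree} verbatim to obtain
\[
H^0\bigl(\Q_p, \Fil_\bullet \D_{\varrho \otimes \kappa^{-1}}\bigr) = 0 \qquad \text{and} \qquad H^2\bigl(\Q_p, \Fil_\bullet \D_{\varrho \otimes \kappa^{-1}}\bigr) = 0
\]
for each such $\bullet$, together with the analogous vanishing for the quotient $\D_{\varrho \otimes \kappa^{-1}} / \Fil_\bullet \D_{\varrho \otimes \kappa^{-1}}$. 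The $H^2$ vanishing invokes local Tate duality and Proposition 4.1 of \cite{palvannan2016algebraic}, exactly as in Proposition~\ref{prop:locfree}; the $\omega$-condition in \ref{hyp:locp0} enters here.

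Next I would take the long exact sequence in continuous Galois cohomology associated to the displayed short exact sequence. The vanishings of $H^0(\Q_p, \Fil_\Sum \D_{\varrho \otimes \kappa^{-1}})$ on the left and $H^2(\Q_p, \Fil_\Int \D_{\varrho \otimes \kappa^{-1}})$ on the right collapse it to the short exact sequence
\[
0 \to H^1\bigl(\Q_p, \Fil_\Int \D_{\varrho \otimes \kappa^{-1}}\bigr) \to H^1\bigl(\Q_p, \Fil_\One \D_{\varrho \otimes \kappa^{-1}}\bigr) \oplus H^1\bigl(\Q_p, \Fil_\Two \D_{\varrho \otimes \kappa^{-1}}\bigr) \to H^1\bigl(\Q_p, \Fil_\Sum \D_{\varrho \otimes \kappa^{-1}}\bigr) \to 0.
\]

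To finish, I would combine this with the fact, established in the proof of Proposition~\ref{prop:locfree}, that for each $\bullet \in \{\One, \Two, \Int, \Sum\}$ the map induced by the inclusion $\Fil_\bullet \D_{\varrho \otimes \kappa^{-1}} \hookrightarrow \D_{\varrho \otimes \kappa^{-1}}$ identifies $H^1(\Q_p, \Fil_\bullet \D_{\varrho \otimes \kappa^{-1}})$ with $\Loc_\bullet(\Q_p, \D_{\varrho \otimes \kappa^{-1}})$. By functoriality of cohomology applied to the commuting inclusions of the four filtrations into $\D_{\varrho\otimes\kappa^{-1}}$, the surjection $H^1(\Fil_\One) \oplus H^1(\Fil_\Two) \to H^1(\Fil_\Sum)$ becomes, after pushforward into $H^1(\Q_p, \D_{\varrho \otimes \kappa^{-1}})$, the map $(x, y) \mapsto x - y$; its kernel is the subgroup of pairs with equal image, namely $\Loc_\One \cap \Loc_\Two$, and its image is $\Loc_\One + \Loc_\Two$. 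Matching these against the identifications of the outer terms as $\Loc_\Int$ and $\Loc_\Sum$ respectively yields the two asserted equalities. The argument is essentially routine diagram chasing; the main point that needs verification is the inheritance of \ref{hyp:locp0} by the auxiliary filtrations $\Fil_\Int$ and $\Fil_\Sum$, which is not covered by Proposition~\ref{prop:locfree} itself but follows from the Jordan--H\"older remark above.
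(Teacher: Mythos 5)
Your proof is correct and is essentially the paper's argument viewed from the dual side: the paper applies the same vanishing of $H^0(\Qp,-)$ and $H^2(\Qp,-)$ under \ref{hyp:locp0} to the quotient sequence $0\to \D/\mathrm{Fil}_{\Int}\D\to \D/\mathrm{Fil}_\One\D\oplus \D/\mathrm{Fil}_\Two\D\to \D/\mathrm{Fil}_{\Sum}\D\to 0$ and concludes with the snake lemma, whereas you run the Mayer--Vietoris sequence on the filtration submodules and conclude via the identification $\Loc_\bullet(\Qp,\D_{\varrho\otimes\kappa^{-1}})\cong H^1(\Qp,\mathrm{Fil}_\bullet\D_{\varrho\otimes\kappa^{-1}})$ from Proposition~\ref{prop:locfree}. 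The inputs and the structure of the argument are the same (note that Proposition~\ref{prop:locfree} already covers $\bullet\in\{\One,\Two,\Int,\Sum\}$, so your Jordan--H\"older inheritance check, while harmless, is not actually needed).
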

\begin{proof}
To ease notation, let us write
\begin{align*}
 & D= \D_{\varrho\otimes \kappa^{-1}}, \ \
 A =\mathrm{Fil}_\One \D_{\varrho\otimes \kappa^{-1}}, \ \   B=\mathrm{Fil}_\Two \D_{\varrho\otimes \kappa^{-1}}, \ \
 \Loc_A =\Loc_\One(\Qp,\D_{\varrho\otimes \kappa^{-1}}), \ \   \Loc_B =\Loc_\Two(\Qp,D_{\varrho\otimes \kappa^{-1}}),  \\
& \Loc_{\mathrm{int}} =\Loc_\Int(\Qp,D_{\varrho\otimes \kappa^{-1}}), \ \    \Loc_{\mathrm{sum}}=\Loc_A+\Loc_B.
\end{align*}
Our goal is to prove that $\Loc_A\cap \Loc_B \stackrel{?}{=}\Loc_{\mathrm{int}}$ and $\Loc_A+\Loc_B\stackrel{?}{=}\Loc_{\mathrm{sum}}$. There is a short exact sequence
\begin{equation}\label{eq:ses}
0\rightarrow\frac{D}{A\cap B}\stackrel{\alpha}{\longrightarrow} \frac{D}{A}\oplus \frac{D}{B}\stackrel{\beta}{\longrightarrow} \frac{D}{A+B}\rightarrow 0,
\end{equation}
where $\alpha $ is given by $x\mapsto(x\mod A,x\mod B)$ and $\beta$ is given by $(x,y)\mapsto x-y\mod A+B$.

As in the proof of Proposition~\ref{prop:locfree}, the hypothesis \ref{hyp:locp0} and local Tate duality imply that the $H^0(\Qp,-)$ and $H^2(\Qp,-)$ of all the quotients in \eqref{eq:ses} vanish. This gives the following short exact sequence:
\[
0\rightarrow H^1\left(\Qp,\frac{D}{A\cap B}\right){\longrightarrow} H^1\left(\Qp,\frac{D}{A}\right)\oplus H^1\left(\Qp, \frac{D}{B}\right){\longrightarrow} H^1\left(\Qp,\frac{D}{A+B}\right)\rightarrow 0,
\]
which sits inside the following commutative diagram:
\[\xymatrix{
    0 \ar[r] & H^1(\Qp,D) \ar[d] \ar[r]        & H^1(\Qp,D) \oplus H^1(\Qp,D)\ar[d] \ar[r]   & H^1(\Qp,D) \ar[d] \ar[r]      & 0 \\
    0 \ar[r] & H^1\left(\Qp,\frac{D}{A\cap B}\right) \ar[r] & H^1\left(\Qp,\frac{D}{A}\right)\oplus H^1\left(\Qp, \frac{D}{B}\right)
  \ar[r] & H^1\left(\Qp,\frac{D}{A+B}\right) \ar[r] & 0.
    }\]
 Note that the vertical arrows are all surjective by the vanishing of $H^2(\Qp,-)$. Hence, the snake lemma gives the following short exact sequence:
 \begin{equation}\label{eq:sesloc}
 0\rightarrow \Loc_{\mathrm{int}}\stackrel{\alpha'}{\longrightarrow }\Loc_A\oplus \Loc_B\stackrel{\beta'}{\longrightarrow }\Loc_{\mathrm{sum}}\rightarrow 0.\end{equation}
There are natural inclusions $\Loc_{\mathrm{int}}\subset\Loc_A$ and  $\Loc_B\subset\Loc_{\mathrm{sum}}$,  and the morhpisms in \eqref{eq:sesloc} are given by $\alpha':x\mapsto (x,x)$ and $\beta':(x,y)\mapsto x-y$ as in \eqref{eq:ses}. In particular,
\begin{align*}
\textrm{Im} \alpha'=\left\{(x,x):x\in\Loc_{\mathrm{int}}\right\},\quad
\ker\beta'=\left\{(x,x):x\in \Loc_A\cap \Loc_B\right\},\quad
\textrm{Im} \beta' = \Loc_A+\Loc_B.
\end{align*}
Hence, the exactness of \eqref{eq:sesloc} implies that $\Loc_A\cap \Loc_B=\Loc_{\mathrm{int}}$ and $\Loc_A+\Loc_B=\Loc_{\mathrm{sum}}$, as required.
\end{proof}

Proposition \ref{prop:sumint} lets us deduce that $\cZ(\Q,D_{\pmb{d},n})$ is isomorphic to the Pontryagin dual of
\begin{align}\label{eq:Z2def}
 \ker\bigg(H^1\left(G_\Sigma,D_{\pmb{d},n}\right) & \rightarrow   \frac{H^1\left(\Q_p,D_{\pmb{d},n}\right)}{\Loc_{\Int}\left(\Q_p,D_{\pmb{d},n}\right)} \oplus \bigoplus_{l \in \Sigma_0} H^1\left(\Q_l,D_{\pmb{d},n}\right)  \bigg).
  \end{align}

\subsubsection*{Description of $\cZ(\Q,D^\star_{\pmb{d},n})$}

Let $(\varrho\otimes \kappa^{-1})^\star:G_\Sigma \rightarrow \Gl_d(\RRR[[\Gamma_\Cyc]])$ denote the Galois representation given by the action of $G_\Sigma$ on the free $\RRR[[\Gamma_\Cyc]]$-module
$$\LLL^\star_{\varrho\otimes \kappa^{-1}}:=\Hom_{\RRR[[\Gamma_\Cyc]]}\left(\LLL_{\varrho\otimes \kappa^{-1}}, \RRR[[\Gamma_\Cyc]](\chi_p)\right).$$
Given two filtrations on $\varrho$, as in the beginning of Section \ref{sec:desc} that satisfy the condition (\ref{p-critical}), we naturally obtain two filtrations on $(\varrho\otimes \kappa^{-1})^\star$ (that is, two short exact sequences of free $\RRR[[\Gamma_\Cyc]]$-modules that are $\Gal{\overline{\Q}_p}{\Q_p}$-equivariant):
\begin{align*}
0\rightarrow \mathrm{Fil}_\One \LLL^\star_{\varrho\otimes \kappa^{-1}} \rightarrow \LLL^\star_{\varrho\otimes \kappa^{-1}} \rightarrow \frac{\LLL^\star_{\varrho\otimes \kappa^{-1}}}{\mathrm{Fil}_\One \LLL^\star_{\varrho\otimes \kappa^{-1}}} \rightarrow 0, \qquad 0\rightarrow \mathrm{Fil}_\Two \LLL^\star_{\varrho\otimes \kappa^{-1}} \rightarrow \LLL^\star_{\varrho\otimes \kappa^{-1}} \rightarrow \frac{\LLL^\star_{\varrho\otimes \kappa^{-1}}}{\mathrm{Fil}_\Two \LLL^\star_{\varrho\otimes \kappa^{-1}}} \rightarrow 0,
\end{align*}

\begin{align*}
\text{where } \mathrm{Fil}_\One \LLL^\star_{\varrho\otimes \kappa^{-1}} := \Hom\left(\frac{\LLL_{\varrho\otimes \kappa^{-1}}}{\mathrm{Fil}_\One \LLL_{\varrho\otimes \kappa^{-1}}}, \RRR[[\Gamma_\Cyc]](\chi_p)\right), \quad \mathrm{Fil}_\Two \LLL^\star_{\varrho\otimes \kappa^{-1}} := \Hom\left(\frac{\LLL_{\varrho\otimes \kappa^{-1}}}{\mathrm{Fil}_\Two \LLL_{\varrho\otimes \kappa^{-1}}},\RRR[[\Gamma_\Cyc]](\chi_p)\right).
\end{align*}
Both these filtrations satsify the condition (\ref{p-critical}).  Suppose that we also have short exact sequences as in equation (\ref{eq:sumint}). One can then deduce that we have the following natural isomorphisms:
\begin{align*}
\mathrm{Fil}_\Int \LLL^\star_{\varrho\otimes \kappa^{-1}} &\cong \Hom_{\RRR[[\Gamma_\Cyc]]}\left(\frac{\LLL_{\varrho\otimes \kappa^{-1}}}{\mathrm{Fil}_\One \LLL_{\varrho\otimes \kappa^{-1}} + \mathrm{Fil}_\Two \LLL_{\varrho\otimes \kappa^{-1}}}, \RRR[[\Gamma_\Cyc]](\chi_p)\right), \\ \mathrm{Fil}_\Sum \LLL^\star_{\varrho\otimes \kappa^{-1}} &\cong \Hom_{\RRR[[\Gamma_\Cyc]]}\left(\frac{\LLL_{\varrho\otimes \kappa^{-1}}}{\mathrm{Fil}_\One \LLL_{\varrho\otimes \kappa^{-1}} \bigcap \mathrm{Fil}_\Two \LLL_{\varrho\otimes \kappa^{-1}}}, \RRR[[\Gamma_\Cyc]](\chi_p)\right),
\end{align*}
involving the $\RRR[[\Gamma_\Cyc]]$-modules $\mathrm{Fil}_\Int \LLL^\star_{\varrho\otimes \kappa^{-1}} := \mathrm{Fil}_\One \LLL^\star_{\varrho\otimes \kappa^{-1}} \cap \mathrm{Fil}_\Two \LLL^\star_{\varrho\otimes \kappa^{-1}}$ and $\mathrm{Fil}_\Sum \LLL^\star_{\varrho\otimes \kappa^{-1}}:= \mathrm{Fil}_\One \LLL^\star_{\varrho\otimes \kappa^{-1}}  + \mathrm{Fil}_\Two \LLL^\star_{\varrho\otimes \kappa^{-1}}$.

We also obtain the following short exact sequences of free $\RRR[[\Gamma_\Cyc]]$-modules, similar to equation (\ref{eq:sumint}), that are $\Gal{\overline{\Q}_p}{\Q_p}$-equivariant:
\begin{align}
& 0 \rightarrow \mathrm{Fil}_{\mathrm{int}} \LLL^\star_{\varrho\otimes \kappa^{-1}} \rightarrow \LLL^\star_{\varrho\otimes \kappa^{-1}}\rightarrow \frac{\LLL^\star_{\varrho\otimes \kappa^{-1}}}{\mathrm{Fil}_{\mathrm{int}}\LLL^\star_{\varrho\otimes \kappa^{-1}}} \rightarrow 0, \\ \notag &  0 \rightarrow \mathrm{Fil}_{\mathrm{sum}}\LLL^\star_{\varrho\otimes \kappa^{-1}} \rightarrow \LLL^\star_{\varrho\otimes \kappa^{-1}}\rightarrow \frac{\LLL^\star_\varrho}{\mathrm{Fil}_{\mathrm{sum}}\LLL^\star_{\varrho\otimes \kappa^{-1}}} \rightarrow 0.
 \end{align}

Let $\Loc_{\One}(\Q,\D^\star_{\varrho\otimes \kappa^{-1}})$, $\Loc_{\Two}(\Q,\D^\star_{\varrho\otimes \kappa^{-1}})$ and $\Loc_{\Int}(\Q,\D^\star_{\varrho\otimes \kappa^{-1}})$ denote the discrete local conditions inside $H^1\left(\Q_p,\D^\star_{\varrho\otimes \kappa^{-1}}\right)$ obtained from the filtrations $\mathrm{Fil}_\One \LLL^\star_{\varrho\otimes \kappa^{-1}}$, $\mathrm{Fil}_\Two \LLL^\star_{\varrho\otimes \kappa^{-1}}$ and $\mathrm{Fil}_\Int \LLL^\star_{\varrho\otimes \kappa^{-1}}$ respectively. Suppose that $\varrho$ satisfies \ref{hyp:locp0}. An analog of Proposition \ref{prop:sumint} would tell us that inside $H^1\left(\Q_p,\D^\star_{\varrho\otimes \kappa^{-1}}\right)$ the intersection $\Loc_{\One}(\Q,\D^\star_{\varrho\otimes \kappa^{-1}}) \bigcap \Loc_{\Two}(\Q,\D^\star_{\varrho\otimes \kappa^{-1}})$  is equal to $\Loc_{\Int}(\Q,\D^\star_{\varrho\otimes \kappa^{-1}})$. By Proposition \ref{prop:sumint}, we have the following equality of $\RRR[[\Gamma_\Cyc]]$-modules:
\begin{align*}
\Loc_\mathrm{sum} \left(\Q_p,\D_{\varrho \otimes \kappa^{-1}}\right) = \Loc_\One\left(\Q_p,\D_{\varrho \otimes \kappa^{-1}}\right)+ \Loc_\Two\left(\Q_p,\D_{\varrho \otimes \kappa^{-1}}\right).
\end{align*}
For an $R$-module $M$, we let $M^\dagger$ denote the reflexive dual $\Hom_R(M,R)$. Using the duality theorems in \cite{lei2018codimension} (see equation (3.19) in \cite{lei2018codimension}), we can deduce the following isomorphism of $\RRR[[\Gamma_\Cyc]]$-modules:
\begin{align*}
\bigg(\Loc_\mathrm{sum} \left(\Q_p,\D_{\varrho \otimes \kappa^{-1}}\right) ^\vee\bigg)^\dagger \cong H^1_\ct\left(\Q_p, \mathrm{Fil}_{\mathrm{sum}} \LLL_{\varrho \otimes \kappa^{-1}}\right) \subset H^1_\ct\left(\Q_p,  \LLL_{\varrho \otimes \kappa^{-1}}\right).
\end{align*}
We can now describe $\cZ(\Q,D^\star_{\pmb{d},n})$. Follow all the notations and hypotheses of Theorem \ref{maintheorem}.  Let $\left(\rho,R,d,n\right)$ equal $\left(\rhoDN{4}{3},R_{F,G}[[\Gamma_\Cyc]],4,3\right)$ or $\left(\rhoDN{8}{4},R_{F,G,H}[[\Gamma_\Cyc]],8,4\right)$.
 Let $L^\star_\perp$ denote the submodule of $H^1\left(\Q_p,D^\star_{\pmb{d},n}\right)$ that equals the orthogonal complement of $\left(\left(\Loc_\One\left(\Q_p,D_{\pmb{d},n}\right)+ \Loc_\Two\left(\Q_p,D_{\pmb{d},n}\right) \right)^\vee\right)^\dagger$ under the local pairing given in equation (\ref{eq:per_pairing}). Following \cite{lei2018codimension}, the $\RRR$-module $\cZ(\Q,D^\star_{\pmb{d},n})$ is defined to be the Pontryagin dual of
\begin{align}\label{eq:ZS1def}
 \ker\bigg(H^1\left(G_\Sigma, D^\star_{\pmb{d},n} \right) & \rightarrow   \frac{H^1\left(\Q_p,\ D^\star_{\pmb{d},n} \right)}{L^\star_\perp} \oplus \bigoplus_{l \in \Sigma_0} H^1\left(\Q_l, D^\star_{\pmb{d},n} \right)  \bigg).
  \end{align}

\begin{lemma} \label{lem:refpondual}
Suppose that $\varrho$ satisfies \ref{hyp:locp0}. Under the perfect pairing (given by local Tate duality):
\begin{align} \label{eq:per_pairing}
H^1\left(\Q_p,\D^\star_{\varrho \otimes \kappa^{-1}}\right) \times H^1_\ct\left(\Q_p, \LLL_{\varrho \otimes \kappa^{-1}}\right) \rightarrow \Q_p/\Z_p,
\end{align}
$\Loc_{\Int}\left(\Q_p,\D^\star_{\varrho \otimes \kappa^{-1}}\right)$ is the orthogonal complement of $\left(\left(\Loc_\One\left(\Q_p,\D_{\varrho \otimes \kappa^{-1}}\right)+ \Loc_\Two\left(\Q_p,\D_{\varrho \otimes \kappa^{-1}}\right) \right)^\vee\right)^\dagger$.
\end{lemma}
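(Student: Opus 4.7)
The plan is to reduce the statement to a standard orthogonality in local Tate duality, using the fact that the filtrations on $\LLL^\star_{\varrho \otimes \kappa^{-1}}$ have been defined precisely so that $\mathrm{Fil}_\Int \LLL^\star_{\varrho \otimes \kappa^{-1}}$ is the Tate dual of the quotient $\LLL_{\varrho \otimes \kappa^{-1}}/\mathrm{Fil}_\Sum \LLL_{\varrho \otimes \kappa^{-1}}$. Concretely, the displayed isomorphism just after equation (\ref{eq:sumint}) rewrites $\mathrm{Fil}_\Int \LLL^\star_{\varrho \otimes \kappa^{-1}}$ as $\Hom_{\RRR[[\Gamma_\Cyc]]}(\LLL_{\varrho \otimes \kappa^{-1}}/\mathrm{Fil}_\Sum \LLL_{\varrho \otimes \kappa^{-1}}, \RRR[[\Gamma_\Cyc]](\chi_p))$, which is exactly the dual filtration compatible with local Tate duality.

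First, I would identify $\Loc_\Int(\Q_p, \D^\star_{\varrho \otimes \kappa^{-1}})$ with the image of $H^1(\Q_p, \mathrm{Fil}_\Int \D^\star_{\varrho \otimes \kappa^{-1}})$ inside $H^1(\Q_p, \D^\star_{\varrho \otimes \kappa^{-1}})$. By hypothesis \ref{hyp:locp0} applied to $\varrho^\star$ (which has the same residual Jordan--H\"older constituents up to a Tate twist by $\chi_p$; the hypothesis is formulated to be symmetric under Tate duality since $\omega$ is excluded alongside the trivial character), the argument used in Proposition \ref{prop:locfree} gives the vanishing of the relevant $H^0$ terms and hence the injectivity of this natural map, together with the vanishing of $H^2(\Q_p, \mathrm{Fil}_\Int \D^\star_{\varrho \otimes \kappa^{-1}})$. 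Simultaneously, the remark immediately preceding the lemma records the identification $((\Loc_\Sum(\Q_p, \D_{\varrho \otimes \kappa^{-1}}))^\vee)^\dagger \cong H^1_\ct(\Q_p, \mathrm{Fil}_\Sum \LLL_{\varrho \otimes \kappa^{-1}})$, and by Proposition~\ref{prop:sumint} this coincides with $((\Loc_\One + \Loc_\Two)^\vee)^\dagger$.

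Next, I would invoke the standard orthogonality theorem in local Tate duality: for the short exact sequence
\begin{align*}
0 \to \mathrm{Fil}_\Sum \LLL_{\varrho \otimes \kappa^{-1}} \to \LLL_{\varrho \otimes \kappa^{-1}} \to \LLL_{\varrho \otimes \kappa^{-1}}/\mathrm{Fil}_\Sum \LLL_{\varrho \otimes \kappa^{-1}} \to 0,
\end{align*}
the Tate dual short exact sequence is precisely
\begin{align*}
0 \to \mathrm{Fil}_\Int \D^\star_{\varrho \otimes \kappa^{-1}} \to \D^\star_{\varrho \otimes \kappa^{-1}} \to \D^\star_{\varrho \otimes \kappa^{-1}}/\mathrm{Fil}_\Int \D^\star_{\varrho \otimes \kappa^{-1}} \to 0.
\end{align*}
Under the perfect pairing $H^1(\Q_p, \D^\star_{\varrho \otimes \kappa^{-1}}) \times H^1_\ct(\Q_p, \LLL_{\varrho \otimes \kappa^{-1}}) \to \Q_p/\Z_p$, the image of $H^1(\Q_p, \mathrm{Fil}_\Int \D^\star_{\varrho \otimes \kappa^{-1}})$ is the exact annihilator of the image of $H^1_\ct(\Q_p, \mathrm{Fil}_\Sum \LLL_{\varrho \otimes \kappa^{-1}})$. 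This is a consequence of the compatibility of the cup product with the Pontryagin duality pairing on coefficients, together with the vanishing of the boundary maps in degree zero on the relevant quotient.

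Combining these two steps yields the desired equality. The main technical obstacle will be bookkeeping around the reflexive dual $(-)^\dagger$: the duality results of \cite{lei2018codimension} deliver $H^1_\ct(\Q_p, \mathrm{Fil}_\Sum \LLL_{\varrho \otimes \kappa^{-1}})$ only after an application of $(-)^\dagger$ to $\Loc_\Sum^\vee$, so I must verify that the annihilator statement is preserved by the passage between Pontryagin and reflexive duals. Since local Tate duality furnishes the pairing with discrete and compact coefficients respectively, this amounts to unwinding the definition of the pairing after identifying $\Loc_\Sum^\vee$ with a quotient of the free $\RRR[[\Gamma_\Cyc]]$-module $H^1_\ct(\Q_p, \mathrm{Fil}_\Sum \LLL_{\varrho \otimes \kappa^{-1}})^\vee{}^\vee$; this is where Proposition~\ref{prop:locfree} (freeness of the relevant Iwasawa cohomology modules) is essential to ensure the reflexive dual behaves as expected.
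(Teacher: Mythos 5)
Your proposal is correct and is essentially the paper's own argument: the paper proves the lemma by assembling exactly these ingredients (the identification $\mathrm{Fil}_\Int \LLL^\star_{\varrho\otimes\kappa^{-1}} \cong \Hom(\LLL_{\varrho\otimes\kappa^{-1}}/\mathrm{Fil}_\Sum\LLL_{\varrho\otimes\kappa^{-1}}, \RRR[[\Gamma_\Cyc]](\chi_p))$, the $H^0$/$H^2$ vanishing coming from \ref{hyp:locp0}, the pre-lemma identification $\left(\Loc_\Sum^\vee\right)^\dagger \cong H^1_\ct(\Q_p,\mathrm{Fil}_\Sum\LLL_{\varrho\otimes\kappa^{-1}})$, and local Tate duality) into a commutative diagram of two short exact sequences with vertical duality isomorphisms, from which the exact-annihilator statement you invoke is read off directly.
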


\begin{proof}

The lemma follows by applying local duality along with the hypothesis \ref{hyp:locp0} which then provides us the following commutative diagram:
\begin{align*}
\xymatrix{
0 \ar[r] & H^1_\ct\left(\Q_p, \mathrm{Fil}_{\mathrm{sum}} \LLL_{\varrho \otimes \kappa^{-1}}\right) \ar[d]^{\cong} \ar[r]& H^1_\ct\left(\Q_p, \LLL_{\varrho \otimes \kappa^{-1}}\right) \ar[d]^{\cong} \ar[r] & H^1_\ct\left(\Q_p, \frac{\LLL_{\varrho \otimes \kappa^{-1}}}{\mathrm{Fil}_{\mathrm{sum}} \LLL_{\varrho \otimes \kappa^{-1}}} \right) \ar[d]^{\cong} \ar[r]&  0,\\
0 \ar[r] & H^1\left(\Q_p, \frac{\D^\star_{\varrho \otimes \kappa^{-1}}}{\mathrm{Fil}_{\mathrm{int}} \D^\star_{\varrho \otimes \kappa^{-1}}}\right)^\vee  \ar[r]& H^1\left(\Q_p, \D^\star_{\varrho \otimes \kappa^{-1}}\right)^\vee \ar[r] & H^1 \left(\Q_p, \mathrm{Fil}_{\mathrm{int}} \D^\star_{\varrho \otimes \kappa^{-1}} \right)^\vee \ar[r]&  0.
}
\end{align*}
\end{proof}

Lemma \ref{lem:refpondual} lets us deduce that $\cZ(\Q,D^\star_{\pmb{d},n})$ is isomorphic to be the Pontryagin dual of
\begin{align}\label{eq:ZS3def}
& \ker\bigg(H^1\left(G_\Sigma, D^\star_{\pmb{d},n} \right) \rightarrow   \frac{H^1\left(\Q_p,\ D^\star_{\pmb{d},n} \right)}{\Loc_{\Int}\left(\Q_p, \ D^\star_{\pmb{d},n} \right)} \oplus \bigoplus_{l \in \Sigma_0} H^1\left(\Q_l, D^\star_{\pmb{d},n} \right)  \bigg) \\
\cong  \ &  \ker\bigg(H^1\left(G_\Sigma,D^\star_{\pmb{d},n}\right)  \rightarrow   \frac{H^1\left(\Q_p,D^\star_{\pmb{d},n}\right)}{\Loc_{\One}\left(\Q_p,D^\star_{\pmb{d},n}\right) \bigcap \Loc_{\Two}\left(\Q_p,D^\star_{\pmb{d},n}\right)} \oplus \bigoplus_{l \in \Sigma_0} H^1\left(\Q_l,D^\star_{\pmb{d},n}\right)  \bigg).
  \end{align}

Let $\Sel_{\One}(\Q,D^\star_{\rhoDN{d}{n}})$ and $\Sel_{\Two}(\Q,D^\star_{\rhoDN{d}{n}})$ denote the  discrete Selmer groups inside $H^1\left(G_\Sigma, D^\star_{\rhoDN{d}{n}} \right)$ defined using the local conditions $\Loc_{\One}(\Q_p,D^\star_{\rhoDN{d}{n}})$ and $\Loc_{\Two}(\Q_p,D^\star_{\rhoDN{d}{n}})$ respectively. Our observations tell us that $\cZ(\Q,D^\star_{\pmb{d},n})$ is isomorphic to the Pontryagin dual of the intersection $\Sel_{\One}(\Q,D^\star_{\rhoDN{d}{n}}) \bigcap \Sel_{\Two}(\Q,D^\star_{\rhoDN{d}{n}})$. Also, $\cZ(\Q,D^\star_{\pmb{d},n})$ fits into the following surjection of $R$-modules:
\begin{align*}
  \underbrace{H^1\left(G_\Sigma, D^\star_{\rhoDN{d}{n}} \right)^\vee}_{\substack{\text{Conjecturally} \\ \text{has $R$-rank $d^-(\rho^\star)$}}}  \twoheadrightarrow \underbrace{\substack{\Sel_{\One}(\Q,D^\star_{\rhoDN{d}{n}})^\vee \\ \Sel_{\Two}(\Q,D^\star_{\rhoDN{d}{n}})^\vee}}_{\substack{\text{Conjecturally} \\ \text{ $R$-torsion}}} \twoheadrightarrow \underbrace{\ZZZ(\Q,D^\star_{\rhoDN{d}{n}})}_{\substack{\text{Pseudo-null, assuming}\\ \text{the hypotheses in Theorem \ref{maintheorem}}}} \twoheadrightarrow \Sha^{1}\left(\Q,D^\star_{\rhoDN{d}{n}}\right)^\vee.
\end{align*}

\section{Proof of Theorem \ref{maintheorem}} \label{sec:proof_main}

Theorem \ref{maintheorem} directly follows from Theorem 4.3 in \cite{lei2018codimension}. We will need to verify all the hypotheses of the theorem there. \\

By Proposition \ref{prop:locfree}, the $R$-modules $\Loc_\One(\Q_p,D_{\pmb{d},{n}})^\vee$ and $\Loc_\Two(\Q_p,D_{\pmb{d},{n}})^\vee$ are free. Using local Euler-Poincair\'e characteristics, one can conclude that both these $R$-mdoules have rank $d^+$. By Proposition \ref{prop:sumint}, the $R$-module $\Loc_\One(\Q_p,D_{\pmb{d},{n}}) + \Loc_\Two(\Q_p,D_{\pmb{d},{n}})$ is equal to $\Loc_\Sum(\Q_p,D_{\pmb{d},{n}})$ and $\Loc_\One(\Q_p,D_{\pmb{d},{n}}) \bigcap \Loc_\Two(\Q_p,D_{\pmb{d},{n}})$ is equal to $\Loc_\Int(\Q_p,D_{\pmb{d},{n}})$.   Using Proposition \ref{prop:locfree} and local Euler-Poincair\'e characteristics, the $R$-module $\Loc_\Sum(\Q_p,D_{\pmb{d},{n}})^\vee$ is free of rank $d^++1$.  The short exact sequence of $R$-modules
$$ 0 \rightarrow \left(\dfrac{\Loc_{\Two}\left(\Q_p,D_{\rhodn}\right)}{\Loc_\Int(\Q_p,D_{\pmb{d},{n}})}\right)^\vee \rightarrow \Loc_\Sum(\Q_p,D_{\pmb{d},{n}})^\vee \rightarrow \Loc_{\One}\left(\Q_p,D_{\rhodn}\right)^\vee \rightarrow  0$$
and our earlier observations let us conclude that the $R$-module $\left(\dfrac{\Loc_{\Two}\left(\Q_p,D_{\rhodn}\right)}{\Loc_\Int(\Q_p,D_{\pmb{d},{n}})}\right)^\vee$ is also free.  A similar argument shows that the $R$-module $\left(\dfrac{\Loc_{\One}\left(\Q_p,D_{\rhodn}\right)}{\Loc_\Int(\Q_p,D_{\pmb{d},{n}})}\right)^\vee$ is also free.  The conditions labeled ``$\mathrm{Hypothesis \ Rank}$'' and ``$\mathrm{Hypothesis \ LF}$'' follow from these observations. {Note that a finitely generated $R_{F,G}[[\Gamma_\Cyc]]$ (or $R_{F,G,H}[[\Gamma_\Cyc]]$) module must be torsion if it is also finitely generated over the coefficient ring $R_{F,G}$ (or $R_{F,G,H}$ respectively).} The condition labeled ``$\mathrm{Loc}(0)$'' follows from this observation and Proposition 4.1 in \cite{palvannan2016algebraic}. The condition labeled ``$\mathrm{Reg}(0)$'' follows from Proposition 4.17 in \cite{palvannan2016algebraic}.

To establish the condition labeled ``$\mathrm{Gor}$'', we need to prove that the rings $R_{F,G}[[\Gamma_\Cyc]]$ and $R_{F,G,H}[[\Gamma_\Cyc]]$ are Gorenstein. We will show that $R_{F,G}[[\Gamma_\Cyc]]$ is Gorenstein. Similar arguments can be used to show that $R_{F,G,H}[[\Gamma_\Cyc]]$ is also Gorenstein. We shall fix a topological generator $\gamma_0$ of $\Gamma_\Cyc$, and hence fix an isomorphism $R_{F,G}[[\Gamma_\Cyc]]  \cong R_{F,G}[[s]]$ by sending $\gamma_0-1$ to $s$. The natural $O[[x_G]]$-inclusions $R_{F}[[x_G,s]] \hookrightarrow R_{F,G}[[\Gamma_\Cyc]]$ and $R_{G} \hookrightarrow R_{F,G}[[\Gamma_\Cyc]]$ give us a ring homomorphism $R_{F}[[x_G,s]] \otimes_{O[[x_G]]} R_G \twoheadrightarrow R_{F,G}[[\Gamma_\Cyc]] \cong R_{F,G}[[s]]$. This ring homomorphism must be surjective since this is a map of complete $O$-algebras whose image contains $R_F$, $R_G$ and $s$ (sitting naturally {inside} $R_{F,G}[[s]]$). The rings $R_F$ and $R_G$, being Noetherian normal local domains with Krull dimension $2$, are also Cohen-Macaulay. See \cite[Exercise 2.2.30(a)]{MR1251956}. By  \cite[Proposition 2.2.11]{MR1251956}, we can conclude that the rings $R_F$ and $R_G$ are finitely generated free $O[[x_F]]$ and $O[[x_G]]$-modules with ranks (say) $r_1$ and $r_2$ respectively. By \cite[Proposition 5.5.3]{ribes2000profinite}, $R_{F,G}[[s]]$ is a free $O[[x_F,x_G,s]]$-module with rank $r_1r_2$. Using Nakayama's lemma, we can conclude that $R_F[[x_G,s]]$ is a free $O[[x_F,x_G,s]]$-module with rank $r_1$. For example, see the proof of \cite[Lemma 4.3.13]{MR3076731}. As a result, the tensor product $R_{F}[[x_G,s]] \otimes_{O[[x_G]]} R_G$ is a free $O[[x_F,x_G,s]]$-module with rank $r_1r_2$. We must have the following isomorphism:
\begin{align}\label{eq:completetensor}
R_{F}[[x_G,s]] \otimes_{O[[x_G]]} R_G \xrightarrow {\cong} R_{F,G}[[\Gamma_\Cyc]],
\end{align}
since the natural map $R_{F}[[x_G,s]] \otimes_{O[[x_G]]} R_G \twoheadrightarrow R_{F,G}[[\Gamma_\Cyc]]$ is surjective and can be viewed as an $O[[x_F,x_G,s]]$-module homomorphism between two free modules  with the same rank.

The ring $R_G$ is a finitely generated $O[[x_G]]$-module. Since $R_F[[s]]$ is a torsion-free module over the DVR $O$, the extension $O \hookrightarrow R_F[[s]]$ is flat. By \cite[Theorem 22.3]{matsumura1989commutative}, the extension $O[[x_G]] \hookrightarrow R_{F}[[x_G,s]]$ is also flat. The hypothesis \ref{hyp:gor} and \cite[Corollary 3.3.21]{MR1251956} let us conclude that the rings $O[[x_G]]$, $R_{F}[[x_G,s]]$ and $R_G$ are Gorenstein. These observations, equation (\ref{eq:completetensor}) along with \cite[Theorem 2]{MR0257062} establish that the ring $R_{F,G}[[\Gamma_\Cyc]]$ is Gorenstein. \\

We need to further establish two conditions related to the $\RRR$-module labeled $\XXX(\Q,D_{\pmb{d},n})$ in \cite{lei2018codimension} and which is defined as the Pontryagin dual of:
\begin{align*}
 \ker\bigg(H^1\left(G_\Sigma, D_{\pmb{d},n} \right) & \xrightarrow {\phi_\XXX}  \frac{H^1\left(\Q_p,\ D_{\pmb{d},n}\right)}{\Loc_\One(\Q_p,D_{\pmb{d},n})+\Loc_\Two(\Q_p,D_{\pmb{d},n})} \oplus \bigoplus_{l \in \Sigma_0} H^1\left(\Q_l, D_{\pmb{d},n} \right)  \bigg).
  \end{align*}
For the first condition involving $\XXX(\Q,D_{\pmb{d},n})$, we need to establish that for every height two prime ideal $\QQQ$ in $R$, the $R_\QQQ$-module $\XXX(\Q,D_{\pmb{d},n})_\QQQ$ has finite projective dimension. By Proposition \ref{prop:sumint}, the discrete $R$-module $\Loc_\One(\Q_p,D_{\pmb{d},n})+\Loc_\Two(\Q_p,D_{\pmb{d},n})$ is equal to $\Loc_\Sum(\Q_p,D_{\pmb{d},n})$. By Proposition \ref{prop:locfree}, the $R$-module $\left(\dfrac{H^1\left(\Q_p,\ D_{\pmb{d},n}\right)}{\Loc_\Sum(\Q_p,D_{\pmb{d},n})}\right)^\vee$ is free. Combining Propositions 4.17 and 5.5 in \cite{palvannan2016algebraic}, one can conclude that for every height two prime ideal $\QQQ$ in $R$ and for every $l$ in $\Sigma_0$, the $R_\QQQ$-modules $\left(H^1\left(G_\Sigma, D_{\pmb{d},n} \right)^\vee\right)_\QQQ$ and $\left(H^1\left(\Q_l, D_{\pmb{d},n} \right)^\vee\right)_\QQQ$ have finite projective dimension. Using \cite[Lemma 3.6]{lei2018codimension}, one can conclude that the map $\phi_\XXX$ is surjective. These observations now let us conclude that for every height two prime ideal $\QQQ$ in $R$, the $R_\QQQ$-module $\XXX(\Q,D_{\pmb{d},n})_\QQQ$ has finite projective dimension.

Consider the reflexive hull $\XXX(\Q,D_{\pmb{d},n})^{\dagger \dagger}$ of the $R$-module $\XXX(\Q,D_{\pmb{d},n})$. Let $\QQQ$ be a height two prime ideal in $R$. For the second condition involving $\XXX(\Q,D_{\pmb{d},n})$, we need to establish that the $R_\QQQ$-module $\coker(\XXX(\Q,D_{\pmb{d},n}) \rightarrow \XXX(\Q,D_{\pmb{d},n})^{\dagger \dagger}) \otimes_{R} R_\QQQ$ has finite projective dimension.

By \cite[Corollary 4.2]{lei2018codimension}, the $R$-module $\XXX(\Q,D_{\pmb{d},n})$ is torsion-free. In the previous paragraph, we have shown that the projective dimension of the $R_\QQQ$-module $\XXX(\Q,D_{\pmb{d},n})_\QQQ$ is finite. It will thus be enough to establish that the $R$-module $\XXX(\Q,D_{\pmb{d},n})^{\dagger \dagger}$ is free. We will show that the $R$-module $\XXX(\Q,D_{\pmb{d},n})^{\dagger}$ is free. By applying the functor $\Hom_R(\mbox{--},R)$ to the short exact sequence in (\cite[Equation 3.10]{lei2018codimension}), we obtain the following exact sequence of $R$-modules:
\begin{align} \label{eq:Xdagger}
0 \rightarrow \XXX(\Q,D_{\pmb{d},n})^{\dagger} \rightarrow R \rightarrow  \Ext^1_{R}\left(\Sel_\One(\Q,D_{\pmb{d},n})^\vee,R\right) \rightarrow \Ext^1_{R}\left(\XXX(\Q,D_{\pmb{d},n}),R\right)
\end{align}

To obtain equation  (\ref{eq:Xdagger}) from (\cite[Equation 3.10]{lei2018codimension}), we have identified the free $R$-module $\left(\dfrac{\Loc_{\Two}\left(\Q_p,D_{\rhodn}\right)}{{\Loc_{\Int}\left(\Q_p,D_{\rhodn}\right)}}\right)^\vee$ of rank one with $R$. This allows us to identify $\XXX(\Q,D_{\pmb{d},n})^{\dagger}$ with an ideal inside $R$. This ideal must be reflexive over $R$. The $R$-module $ \XXX(\Q,D_{\pmb{d},n})$ is torsion-free. Since $R$ is integrally closed, for every height one prime ideal $\p$ in $R$, the localization $R_\p$ must be a DVR and consequently the $R_\p$-module $\XXX(\Q,D_{\pmb{d},n})_\p$ must be free. Since localization commutes with $\Ext$, the $R$-module $\Ext^1_{R}\left(\XXX(\Q,D_{\pmb{d},n}),R\right)$ must be pseudo-null. Using this trick of localizing at every height one prime ideal of $R$, we have
$\Div\bigg( \Ext^1_{R}\left(\Sel_\One(\Q,D_{\pmb{d},n})^\vee,R\right)\bigg)=(\theta^\One_{\pmb{d},n})$, an equality of divisors in $Z^1(R)$. Combining these observations, for every height one prime ideal $\p$ in $R$, we obtain the following short exact sequence of $R_\p$-modules:
\begin{align}  \label{eq:XdaggerP}
0 \rightarrow \XXX(\Q,D_{\pmb{d},n})^{\dagger} \otimes_R R_\p \rightarrow R_\p \rightarrow \frac{R_\p}{(\theta^\One_{\pmb{d},n})} \rightarrow 0
\end{align}

Since $R$ is integrally closed, equation (\ref{eq:XdaggerP}) lets us deduce that under the inclusion $\XXX(\Q,D_{\pmb{d},n})^{\dagger} \hookrightarrow R$ given in equation (\ref{eq:Xdagger}), we have a natural inclusion map $\XXX(\Q,D_{\pmb{d},n})^{\dagger} \hookrightarrow (\theta^\One_{\pmb{d},n})$, whose cokernel is pseudo-null. Since the ideal $\XXX(\Q,D_{\pmb{d},n})^{\dagger}$ is reflexive, this natural inclusion must be an equality. This shows that the $R$-module $\XXX(\Q,D_{\pmb{d},n})^{\dagger}$ is free.

The rest of the hypotheses listed in Theorem 4.3 in \cite{lei2018codimension} are part of the hypotheses of Theorem \ref{maintheorem}.

\begin{remark} \label{rem:two_unb}
Since it is important to our methods to construct the rank one module $\XXX(\Q,D_{\pmb{d},n})$, we do not consider the case when the two $p$-adic $L$-functions are both unbalanced, in the setting of the cyclotomic twist deformation of the tensor product of three Hida families. A similar construction of $\XXX(\Q,D_{\pmb{8},4})$ in these cases would produce an $R_{F,G,H}[[\Gamma_\Cyc]]$-module that has rank at least two.
\end{remark}

\section*{Acknowledgements}
We are extremely grateful to Henri Darmon, Ralph Greenberg and Ming-Lun Hsieh for answering many of our questions related to this project. We would like to thank the anonymous referees for having carefully read an earlier version of our manuscript as well as their valuable comments and suggestions.
\bibliographystyle{abbrv}
\bibliography{../biblio}

\Addresses
\end{document}